\newlist{steps}{enumerate}{1}
\setlist[steps, 1]{label = Step \arabic*:}
\DeclareRobustCommand\widecheck[1]{{\mathpalette\@widecheck{#1}}}
\def\@widecheck#1#2{%
   \setbox\z@\hbox{\m@th$#1#2$}%
   \setbox\tw@\hbox{\m@th$#1%
      {%
         \vrule\@width\z@\@height\ht\z@
         \vrule\@height\z@\@width\wd\z@}$}%
   \dp\tw@-\ht\z@
   \@tempdima\ht\z@ \advance\@tempdima2\ht\tw@ \divide\@tempdima\thr@@
   \setbox\tw@\hbox{%
      \raise\@tempdima\hbox{\scalebox{1}[-1]{\lower\@tempdima\box\tw@}}}%
   {\ooalign{\box\tw@ \cr \box\z@}}}
\theoremstyle{plain}
\newtheorem{thm}{Theorem}[section]
\crefname{thm}{Theorem}{Theorems}
\Crefname{thm}{Theorem}{Theorems}
\newtheorem{prop}[thm]{Proposition}
\crefname{prop}{Proposition}{Propositions}
\Crefname{prop}{Proposition}{Propositions}
\newtheorem{lem}[thm]{Lemma}
\crefname{lem}{Lemma}{Lemmas}
\Crefname{lem}{Lemma}{Lemmas}
\newtheorem{cor}[thm]{Corollary}
\crefname{cor}{Corollary}{Corollaries}
\Crefname{cor}{Corollary}{Corollaries}
\crefname{claim}{Claim}{Claims}
\Crefname{claim}{Claim}{Claims}
\crefname{property}{Property}{Properties}
\Crefname{property}{Property}{Properties}
\crefname{problem}{Problem}{Problems}
\Crefname{problem}{Problem}{Problems}
\newtheorem{conjecture}[thm]{Conjecture}
\crefname{conjecture}{Conjecture}{Conjecture}
\Crefname{conjecture}{Conjecture}{Conjecture}
\theoremstyle{definition}
\newtheorem{defn}[thm]{Definition}
\crefname{defn}{Definition}{Definitions}
\Crefname{defn}{Definition}{Definitions}
\crefname{notation}{Notation}{Notations}
\Crefname{notation}{Notation}{Notations}
\crefname{convention}{Convention}{Conventions}
\Crefname{convention}{Convention}{Conventions}
\crefname{cond}{Condition}{Conditions}
\Crefname{cond}{Condition}{Conditions}
\crefname{assum}{Assumption}{Assumptions}
\Crefname{assum}{Assumption}{Assumptions}
\crefname{conj}{Conjecture}{Conjectures}
\Crefname{conj}{Conjecture}{Conjectures}
\newtheorem{claim1}[thm]{Claim}
\crefname{claim1}{Claim}{Claims}
\Crefname{claim1}{Claim}{Claims}
\Crefname{ques}{Question}{Question}
\newtheorem{ques}[thm]{Question}
\crefname{que}{Question}{Question}
\Crefname{que}{Question}{Question}
\theoremstyle{remark}
\newtheorem{rem}[thm]{Remark}
\crefname{rem}{Remark}{Remarks}
\Crefname{rem}{Remark}{Remarks}
\crefname{ex}{Example}{Examples}
\Crefname{ex}{Example}{Examples}
\crefname{section}{Section}{Sections}
\Crefname{section}{Section}{Sections}
\crefname{subsection}{Subsection}{Subsections}
\Crefname{subsection}{Subsection}{Subsections}
\crefname{figure}{Figure}{Figures}
\Crefname{figure}{Figure}{Figures}
\newtheorem*{acknowledgement}{Acknowledgement}
\newcommand{\spinc}{\text{Spin}^c}
\newcommand{\Z}{\mathbb{Z}}
\newcommand{\Q}{\mathbb{Q}}
\newcommand{\CP}{\mathbb{CP}}
\newcommand{\spc}{\mathrm{Spin}^c}
\newcommand{\frakm}{\mathfrak{m}}
\newcommand{\fraks}{\mathfrak{s}}
\newcommand{\frakt}{\mathfrak{t}}
\newcommand{\Met}{\mathrm{Met}}
\newcommand{\Diff}{\mathrm{Diff}}
\newcommand{\Homeo}{\mathrm{Homeo}}
\newcommand{\Aut}{\mathrm{Aut}}
\newcommand{\diag}{\mathrm{diag}}
\newcommand{\del}{\partial}
\newcommand{\Ker}{\mathop{\mathrm{Ker}}\nolimits}
\newcommand{\Hom}{\mathop{\mathrm{Hom}}\nolimits}
\newcommand{\id}{\mathrm{id}}
\newcommand{\B}{\mathcal B}
\newcommand{\G}{\mathcal G}
\newcommand{\C}{\mathbb{C}}
\newcommand{\s}{\mathfrak{s}}
\def\om{\omega}
\def\Om{\Omega}
\def\Spinc{\text{Spin}^c}
\newcommand{\R}{\mathbb R}
\def\ker{\operatorname{Ker}}
\def\cok{\operatorname{Cok}}
\def\det{\operatorname{det}}
\def\dim{\operatorname{dim}}
\def\Hom{\operatorname{Hom}}
\def\Con{\mathcal{C}}
\def\id{\operatorname{Id}}
\newcommand{\mbar}[1]{{\ooalign{\hfil#1\hfil\crcr\raise.167ex\hbox{--}}}}
\def\cV{\mathcal{V}}
\def\Aut{\operatorname{Aut}}
\def\wt{\widetilde}
\newcommand{\HMf}{\widehat{\mathit{HM}}}
    \def\HMt{%
       \setbox0=\hbox{$\widehat{\mathit{HM}}$}
       \setbox1=\hbox{$\mathit{HM}$}
       \dimen0=1.1\ht0
       \advance\dimen0 by 1.17\ht1
       \smash{\mskip2mu\raise\dimen0\rlap{%
          \begin{turn}{180}
              {$\widehat{\phantom{\mathit{HM}}}$}
           \end{turn}} \mskip-2mu    
                \mathit{HM}
                    }{\vphantom{\widehat{\mathit{HM}}}}{}}
\title[Exotic diffeomorphisms and embeddings]{Diffeomorphisms of 4-manifolds with boundary and exotic embeddings}
\author{Nobuo Iida}
\address{Department of Mathematics, Tokyo Institute of Technology, 2-12-1 Ookayama, Meguro-ku, Tokyo 152-8551, Japan}
\email{iida.n.ad@m.titech.ac.jp}
\author{Hokuto Konno}
\address{Graduate School of Mathematical Sciences, the University of Tokyo, 3-8-1 Komaba, Meguro, Tokyo 153-8914, Japan \\and\\
RIKEN iTHEMS, Wako, Saitama 351-0198, Japan}
\email{konno@ms.u-tokyo.ac.jp}
\author{Anubhav Mukherjee}
\address{Department of Mathematics \\ Princeton University \\  Princeton  \\ New Jersey \\ USA}
\email{anubhavmaths@princeton.edu}
\author{Masaki Taniguchi}
\address{2-1 Hirosawa, Wako, Saitama 351-0198, Japan}
\email{masaki.taniguchi@riken.jp}
\date{}
\begin{document}

\begin{abstract}
We define family versions of the invariant of 4-manifolds with contact boundary due to Kronheimer and Mrowka and use these to detect exotic diffeomorphisms of 4-manifolds with boundary. Further, we show the existence of the first example of exotic 3-spheres in a smooth closed 4-manifold with diffeomorphic complements. 

\end{abstract}

\maketitle


\section{Introduction}

Let $W$ be a smooth compact 4-manifold with boundary.
Denote by $\operatorname{Homeo}(W)$ and $\operatorname{Diff}(W)$ the groups of homeomorphisms and diffeomorphisms of $W$, respectively; and by $\operatorname{Homeo} (W, \partial )$ and $\operatorname{Diff} (W, \partial )$ the groups of homeomorphisms and diffeomorphisms fixing boundary pointwise, respectively.
In this paper, we study a comparison between the mapping class groups arising from these groups, through the maps
\begin{align}
\label{eq: main target}
\pi_0(\operatorname{Diff} (W))
\to \pi_0(\operatorname{Homeo} (W)), \quad 
\pi_0(\operatorname{Diff} (W, \partial ))
\to \pi_0(\operatorname{Homeo} (W, \partial ))
\end{align}
induced from the natural inclusions.
A non-zero element of the kernels of maps \eqref{eq: main target} may be called an {\it exotic diffeomorphism} of $W$.
Such a diffeomorphism is topologically isotopic to the identity, but not smoothly so.

The first example of exotic diffeomorphisms in dimension 4 was given by Ruberman~\cite{Rub98}.
This example was detected by an invariant for diffeomorphisms based on Yang--Mills gauge theory for families.
Later, based on Seiberg--Witten theory for families, Baraglia and the second author \cite{BK20}, Kronheimer and Mrowka \cite{KM20}, and J.~Lin \cite{JLin20} gave other examples.
However, all of these examples are given for closed 4-manifolds.
In this paper, we shall present exotic diffeomorphisms of 4-manifolds with non-trivial (i.e. not $S^3$) boundary.
Some of our main results are formulated as an attempt at solving the following conjecture that we propose:

\begin{conjecture}
\label{main conj}
 Let $Y$ be a closed, connected, oriented 3-manifold. Then there exists a compact, oriented smooth 4-manifold $W$ with $\partial W=Y$ such that the natural map 
 \[
\pi_0 (\operatorname{Diff} (W, \partial )  )\to \pi_0 ( \operatorname{Homeo} (W, \partial )  )
\]
is not injective, i.e. there exists an exotic diffeomorphism of $W$ relative to the boundary.
More strongly,
\[
\pi_0 (\operatorname{Diff} (W)  )\to \pi_0 ( \operatorname{Homeo} (W)  )
\]
is not injective.
\end{conjecture}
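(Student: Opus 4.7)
The plan is to reduce the conjecture to a detection problem for the family version of the Kronheimer--Mrowka contact invariant defined in this paper. For a given closed oriented 3-manifold $Y$, I would produce a filling $W$ of $Y$ containing an embedded piece that carries a known exotic diffeomorphism, and then use a gluing/extension property of the family invariant to show that the extension of that diffeomorphism to $W$ remains smoothly nontrivial.

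Concretely, first fix a contact structure $\xi$ on $Y$, which exists by Martinet, and fix, once and for all, a Stein 4-manifold $(V,J)$ with $\partial V = (Y',\xi')$ for which the ordinary Kronheimer--Mrowka contact invariant is known to be nonzero. Take any compact oriented $W_0$ with $\partial W_0 = Y$ (every closed oriented 3-manifold bounds), and set $W = W_0 \natural V$. Now pick a known exotic diffeomorphism $\phi_0$ of a closed 4-manifold $X$ from the work of Ruberman, Baraglia--Konno, Kronheimer--Mrowka, or J.~Lin; after isotopy we may assume $\supp(\phi_0)$ is disjoint from some small 4-ball $B \subset X$. Remove $B$, boundary-sum $X \setminus \mathrm{int}(B)$ onto $W$ along $\partial B = S^3$, and extend $\phi_0$ by the identity to a self-diffeomorphism $\Phi$ of the resulting manifold $\widetilde W$, which has $\partial \widetilde W = Y$. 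Since $\phi_0$ is topologically isotopic to the identity, so is $\Phi$ relative to $\partial \widetilde W$.

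To show $\Phi$ is smoothly nontrivial, apply the family contact invariant to the mapping-torus family of $\Phi$ over $S^1$. The key input is a multiplicativity/gluing formula for boundary connect sum with a closed 4-manifold family, of the schematic form
\[
\SWinv_{\mathrm{fam}}(\widetilde W, \xi_\partial, \Phi) \;=\; \SWinv_{\mathrm{fam}}(X,\phi_0)\cdot \SWinv_{\mathrm{KM}}(V,\xi'),
\]
so that nonvanishing of the two factors on the right---the first by the cited detection results for $\phi_0$, the second by the choice of $(V,J)$---forces nonvanishing on the left and certifies $\Phi$ as exotic rel boundary. The absolute version $\pi_0(\Diff(W)) \to \pi_0(\Homeo(W))$ would then follow provided the class of $\Phi$ is not killed by the action of $\pi_0(\Diff(Y))$ on $\pi_0(\Diff(\widetilde W,\partial))$; this can be arranged by choosing $\phi_0$ whose family invariant pairs nontrivially with a class pulled back from the base $S^1$, so that the pairing cannot be destroyed by any boundary twist.

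The hardest step will be establishing the gluing/multiplicativity formula for the family version of the Kronheimer--Mrowka contact invariant under boundary connect sum with a closed 4-manifold family: one needs a neck-stretching description of the family parametrised moduli spaces and a verification that the contact boundary condition produces no unwanted cancellation. A secondary issue is that, although the construction only requires $Y$ to bound \emph{some} 4-manifold (always true), any residual technical hypothesis on $(Y,\xi)$ that sneaks into the gluing argument must be absorbed into the Stein summand $V$; failing that, one should expect the method to establish the conjecture only for a restricted class of $Y$, consistent with the fact that the statement is formulated as a conjecture rather than a theorem.
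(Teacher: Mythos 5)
You are attempting to prove a statement that the paper itself labels a \emph{conjecture} and explicitly leaves open. The paper does not claim a proof of \cref{main conj}; its strongest result in this direction is \cref{main1}, which replaces the given $Y$ by a \emph{different} 3-manifold $Y'$ (obtained from $Y$ by a ribbon homology cobordism) and only produces the exotic diffeomorphism on a filling of $Y'$, not of $Y$. So any purported proof of \cref{main conj} as stated goes beyond what the authors achieve, and you should be suspicious of your own argument for that reason alone.

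Beyond that, there is a concrete gap in the key step. Your proposed gluing/multiplicativity formula
\[
\SWinv_{\mathrm{fam}}(\widetilde W, \xi_\partial, \Phi) \;=\; \SWinv_{\mathrm{fam}}(X,\phi_0)\cdot \SWinv_{\mathrm{KM}}(V,\xi')
\]
cannot hold for dimension reasons. Nonvanishing of $\SWinv_{\mathrm{fam}}(X,\phi_0)$ forces the formal dimension $d(X,\s_X)=-1$, and nonvanishing of the Kronheimer--Mrowka invariant of the contact piece forces $d(W_0\natural V,\s,\xi)=0$; the connected-sum formula then gives $d(\widetilde W)=-1+0+1=0$, whereas the $S^1$-family Kronheimer--Mrowka invariant is defined to be zero unless $d(\widetilde W)=-1$. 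The paper's non-vanishing gluing result (\cref{fiberwise blow up KM} and \cref{gluing}) avoids this by taking the closed summand to be $N=\CP^2\#2(-\CP^2)$ with $b_2^+(N)=1$ and $d(N,\frakt)=-2$, and by using a diffeomorphism $f_N$ of $N$ that \emph{reverses the homology orientation}; the exotic contribution then comes from a wall-crossing/chamber-flip phenomenon in the $b_2^+=1$ piece, not from transplanting a known exotic diffeomorphism of a closed 4-manifold with $b_2^+>1$. Indeed, a closed $X$ carrying a known exotic $\phi_0$ in the sense of Ruberman, Baraglia--Konno, Kronheimer--Mrowka, or J.~Lin necessarily has $b_2^+(X)>1$, and the paper's own vanishing result \cref{conn sum vanish family} (together with its contact-boundary analogue \cref{conn sum vanish family1}) shows that splitting along $S^3$ with $b_2^+>1$ on both sides and $\Phi=\phi_0\#\id$ kills the family invariant; so if $b_2^+(W_0\natural V)>1$ your proposed left-hand side is actually zero. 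Finally, your treatment of the absolute case $\pi_0(\Diff(W))\to\pi_0(\Homeo(W))$ is not convincing: the paper has to introduce the refined invariant $\overline{FKM}$ indexed over $\pi_1(\Xi^{\mathrm{cont}}(Y),\xi)$ and prove \cref{lem: ab diff: vanishing for some gamma general} precisely because boundary twists can change the constant loop of contact structures to a nontrivial one; the remark that ``the pairing cannot be destroyed by any boundary twist'' does not address this.
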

The problem of finding exotic smooth structures on $W$ was considered in \cite[Question 1.4]{Ya12} and \cite[Question 1]{EMM20} and finding a non-trivial element in the kernel of the above maps is related to finding a non-trivial loop in the space of smooth structures on $W$, so this conjecture can be seen as a family version of the work in \cite[Question 1.4]{Ya12} and \cite[Question 1]{EMM20}.
Three major difficulties in solving this conjecture are: 
\begin{itemize}
    \item What kind of gauge-theoretic invariants of diffeomorphisms in the case of a 4-manifold with boundary should be considered?
    One possibility might be to use families Floer theoretic relative invariants such as those in \cite{SS21}, but they are hard to compute in general.
    \item 
    For what concrete diffeomorphisms can the invariants be calculated? 
    \item In the closed simply-connected case a homeomorphism acting trivially on homology is topologically isotopic to the identity by work of Quinn~\cite{Q86} and Perron~\cite{P86}. Is there an analog of this theorem for 4-manifolds with boundaries?
    %
\end{itemize}

The last point was resolved by Orson and Powell \cite{OP} very recently, and one of our main contributions in this paper is about the first two points where we define a numerical and computable invariant for diffeomorphisms of 4-manifolds with boundary.

\begin{rem}
One may also conjecture that the maps in \cref{main conj} are non-surjective.
To our knowledge, there is no known counter example to this conjecture.  
Positive evidence to the non-surjectivity of the maps is given in \cite{KT20} where non-surjectivity is shown for certain $W$, which is a partial generalization of results for closed 4-manifolds in \cite{FM88,FM882,Do90,MS97,Ba21,KN20} to 4-manifolds with boundary.
\end{rem}

\begin{rem}
For higher homotopy groups, it seems also natural to state conjectures similar to \cref{main conj}. Compared to the $\pi_0$ case, there are few results on the non-injectivity or the non-surjectivity on $\pi_i$ for $i>0$, even for closed 4-manifolds.
For example, a non-injectivity result for $S^4$ is proven by Watanabe in \cite{Wa18} and a non-surjectivity result on $K3$ is proven by Baraglia and the second author in \cite{BK19}.
Our framework also yields invariants, say,  $\pi_i(\Diff(W,\del)) \to \Z$ for $i>0$, and it may be possible to attack this problem using these invariants.
\end{rem}


\subsection{Families Kronheimer--Mrowka invariants} 
\label{subsection: Families Kronheimer--Mrowka's invariants}

Our basic strategy is to make use of contact structures on 3-manifolds $Y$ to study diffeomorphisms of 4-manifolds bounded by $Y$.
Let us first explain the main tool of this paper, which is a family version of the invariant defined by Kronheimer and Mrowka in \cite{KM97} for 4-manifolds with contact boundary. This is a rather simple numerical invariant compared to invariants using monopole Floer theory \cite{KM07}.

Let $(W, \s)$ be a compact Spin$^c$ 4-manifold with contact boundary $(Y,\xi)$, here the $\Spinc$ structure on $Y$ induced by $\xi$ agrees with the one induced by $\s$.
Define $\Diff(W, [\fraks], \del)$ to be the group of diffeomorphisms that preserve the isomorphism class $[\fraks]$ and $\del W$ pointwise.
Define a subgroup $\Diff_H(W, [\fraks], \del)$ of $\Diff(W, [\fraks], \del)$ by collecting all diffeomorphisms in  $\Diff(W, [\fraks], \del)$ that act trivially on the homology of $W$.
From these groups, we shall define homomorphisms
\begin{align}
\label{intro FKM1}
FKM(W, \fraks, \xi, \bullet) : \pi_0(\Diff(W, [\fraks],\del)) \to \Z_2
\end{align}
and 
\begin{align}
\label{intro FKM2}
FKM(W, \fraks, \xi, \bullet) : \pi_0(\Diff_H(W, [\fraks],\del)) \to \Z,
\end{align}
which we call the {\it families Kronheimer--Mrowka invariants} of diffeomorphisms.
These invariants are defined by considering a family of Seiberg--Witten equations over a family of 4-manifolds with a cone-like almost K\"ahler end.
This end is associated with the contact structure $\xi$ on $Y$.

The families Kronheimer--Mrowka invariant is defined similarly to 
the definition of the families Seiberg--Witten invariant for families of closed 4-manifolds \cite{Rub98,LL01}, but to derive an interesting consequence involving isotopy of absolute diffeomorphisms (property (i) of \cref{main1})
we need to consider a refined version of the above invariant using  
information about the topology of the space of contact structures on $Y$.
Let $\Xi^{\mathrm{cont}}(Y)$ denote the space of contact structures on $Y$.
We shall define a map, which is not a homomorphism in general,
\begin{align*}
 FKM(W, \fraks, \xi, \bullet, \bullet) : \Diff(W, [\fraks],\del) \times \pi_1(\Xi^{\mathrm{cont}}(Y), \xi) \to \Z_2,
\end{align*}
or equivalently
\begin{align}
\label{intro refined FKM}
 \overline{FKM}(W, \fraks, \xi, \bullet) : \Diff(W, [\fraks],\del)  \to (\Z_2)^{\pi_1(\Xi^{\mathrm{cont}}(Y), \xi)},
\end{align}
which is defined by considering families of 4-manifolds with contact boundary, rather than fixing the contact structure on the boundary.
The map \eqref{intro refined FKM} satisfies the property that, if $f \in \Diff(W, [\fraks],\del)$ is isotopic to the identity in 
$\Diff(W)$, then some component of the value $\overline{FKM}(W, \fraks, \xi, f) \in (\Z_2)^{\pi_1(\Xi^{\mathrm{cont}}(Y), \xi)}$ is trivial.
For some class of $f$ and element of $\pi_1(\Xi^{\mathrm{cont}}(Y), \xi)$, we may also extract a $\Z$-valued invariant rather than $\Z_2$.

Now we shall see some applications of these invariants.


\subsection{Exotic diffeomorphisms}

We give evidence for \cref{main conj} by showing that many $3$-manifolds bound $4$-manifolds that admit exotic diffeomorphisms with several other interesting properties.

Let $Y_1,Y_2$ be two closed, oriented 3-manifolds. 
We say there is a {\it ribbon homology cobordism} from $Y_1$ to $Y_2$ if there is an 
integer homology cobordism from $Y_1$ to $Y_2$ which has a handle decomposition with only 1- and 2-handles. Note that ribbon homology cobordism is not a symmetric relation. 

\begin{thm}
\label{main1}
Given any closed, oriented 3-manifold $Y$ with rational homology of $S^3$ or of $S^1 \times S^2$, there exists a ribbon homology cobordism to a 3-manifold $Y'$ 
such that $Y'$ bounds a compact oriented smooth 4-manifold $W$ for which there exists a diffeomorphism $f \in \Diff(W,\del)$ such that
the (absolute) mapping class $[f] \in \pi_0(\Diff(W))$ generates an infinite subgroup 
of the kernel of
\[
\pi_0 (\operatorname{Diff} (W) )\to\pi_0(\operatorname{Homeo}(W)).
\]
\end{thm}

Namely, $\{f^n\}_{n \in \Z \setminus \{0\}}$ are exotic diffeomorphisms that are smoothly non-isotopic to each other in $\Diff(W)$.
Moreover, in fact, $\{f^n\}_{n \in \Z}$ are topologically isotopic to the identity in $\Homeo(W,\del)$.
(This follows from a result by Orson and Powell, \cref{top isotopy}.)

\begin{rem}
For a general oriented closed 3-manifold $Y$, we have a statement similar to \cref{main1}, under replacing $\pi_0 (\operatorname{Diff} (W) )\to\pi_0(\operatorname{Homeo}(W))$ with $\pi_0 (\operatorname{Diff} (W) )\to \Aut (H_2(W;\Z))$.
\end{rem}

Moreover, the diffeomorphism $f$ in \cref{main1} yields exotic spheres in 4-manifolds:

\begin{thm}
\label{thm: emb sphere}
In the setup of \cref{main1}, there exists a homologically non-trivial embedded 2-sphere $S$ in $W$ such that the spheres $\{f^n(S)\}_{n \in \Z}$ are mutually exotic in the following sense: if $n \neq n'$, then $f^n(S)$ and $f^{n'}(S)$ are topologically isotopic in $\Homeo(W, \del)$, but not smoothly isotopic in $\Diff(W, \del)$. 
\end{thm}

\begin{rem}
There is a great deal of work on exotic surfaces in 4-manifolds that makes use of the diffeomorphism types of their complements, see \cite{FKV87, FS97,A14, AKHMR15}. Recently, Baraglia~\cite{Ba20} gave exotic surfaces in closed 4-manifolds whose complements are diffeomorphic, based on a method closely related to our technique. Also, in \cite{LM21}, J.~Lin and the third author gave exotic surfaces in the punctured $K3$ whose complements are diffeomorphic using the 4-dimensional Dehn-twist and the families Bauer--Furuta invariant.  
\end{rem}


\begin{rem}
One motivation to consider 4-manifolds with boundary is to find simple manifolds with exotic diffeomorphisms. At the moment the simplest example of a closed 4-manifold that admits an exotic diffeomorphism has $b_2=25$: concretely, $4\C P^2\#21(-\C P^2) \cong K3 \# \C P^2\#2(-\C P^2)$. On the other hand, when we consider 4-manifolds with boundary, one may get an exotic diffeomorphism of a compact 4-manifold with $b_2=4$, see \cref{small b_2} for details.
\end{rem}


An interesting point in \cref{main1} is that, despite that this theorem shall be shown by the invariants involving contact structures explained in \cref{subsection: Families Kronheimer--Mrowka's invariants}, the results can be described without contact structures.
Another interesting point is that the $f^n$ are not mutually smoothly isotopic in the {\it absolute} diffeomorphism group $\Diff(W)$.
Of course, this is stronger than the corresponding statement for  $\Diff(W,\del)$, and the proof requires us to use the refined invariant \eqref{intro refined FKM}, which is less straightforward than the invariants \eqref{intro FKM1} and \eqref{intro FKM2}.
Not only the gauge-theoretic aspect, the proof of \cref{main1} uses some non-trivial techniques of Kirby calculus.

\begin{rem}
Under the setup of \cref{main1},
the mapping class of $f$ in $\Diff(W,\del)$ generates a direct summand isomorphic to $\Z$ in the abelianization of the kernel of 
\[
\pi_0 (\operatorname{Diff} (W,  \partial )  )\to \pi_0(\operatorname{Homeo}(W, \partial)),
\]
see \cref{rem: direct sum main}. This comes from the fact that the map \eqref{intro FKM2} is a homomorphism.
In an upcoming paper \cite{IKMT}, under suitable conditions on $Y$, we also prove the existence of a $\Z^\infty$-summand in the abelianization of the kernel of $\pi_0 (\operatorname{Diff} (W,  \partial )  )\to \pi_0(\operatorname{Homeo}(W, \partial))$. 
\end{rem}

\subsection{Exotic codimension-1 submanifolds in 4-manifolds}
Now we will focus on exotic 3-manifolds in 4-manifolds. 
\begin{defn}\label{exotic def}
We call two smoothly embedded 3-manifolds $Y_1$ and $Y_2$ in a smooth, oriented 4-manifold $X$ are {\it exotic} if \begin{itemize}
    \item[(i)] there is a topological ambient isotopy $H_t: X\times [0,1] \to X$ such that $H_0= Id$ and $H_1(Y_1)=Y_2$,
    \item[(ii)] there is no such smooth isotopy,
    \item[(iii)] complements of $Y_1$ and $Y_2$ are diffeomorphic. 
\end{itemize} 
\end{defn}

\begin{rem}

 If we ignore (i) in \cref{exotic def}, recently, Budney--Gabai \cite{BG19} and independently Watanabe \cite{Wa20} found examples of smoothly embedded 3-balls in $S^1\times D^3$ that are homotopic but not smoothly isotopic.
\end{rem}
\begin{rem}
If we do not consider (iii), one can easily construct an example that satisfies (i) and (ii) in \cref{exotic def} as follows. Let $W$ and $W'$ be a pair of Mazur corks \cite{Akb16,G&S}  that are homeomorphic but not diffeomorphic \cite{AY19,HMP}. Then their doubles $D(W) $ and $D(W')$ are diffeomorphic to the standard $S^4$ \cite{mathoverflow}. Thus we have two embedded 3-manifolds $Y=\partial W$ and $Y' =\partial W'$ that are not smoothly isotopic in $S^4$, since if they were smoothly isotopic. By the isotopy extension theorem, their complements would be diffeomorphic as well. Now notice that the two embedded 3-manifolds $Y$ and $Y'$ are topologically isotopic since there exists an orientation preserving homeomorphism $f: S^4\to S^4$ such that $f(Y)= Y'$. Now Quinn's result \cite{10.4310/jdg/1214437139} says that $f$ is topologically isotopic to identity and thus the two embedded 3-manifolds $Y$ and $ Y'$ are topologically isotopic.
\end{rem}

In this work, to the author's best knowledge, we construct the first example of exotic 3-spheres in a 4-manifold with diffeomorphic complements i.e, satisfying all conditions (i)-(iii):

\begin{thm}
\label{thm: simplest exotic emb}
Let $X=4 \mathbb{C}P^2 \# 21 (-\mathbb{C}P^2)$ and $S$ be the 3-sphere embedded in $X$ which gives a connected sum decomposition
$X= X_1\# X_2$, with $b^+_2(X_1)=b^+_2(X_2)=2$.
Then there exists infinitely many copies of $S$ with diffeomorphic complements that are topologically isotopic but not smoothly.
\end{thm}

This \lcnamecref{thm: simplest exotic emb} follows from a more general statement, \cref{exotic 3-manifolds non-vanishing}.


\begin{rem}
It is not explicitly stated but known to the experts that techniques involving the complexity of h-cobordism defined Morgan--Szab\'o \cite{MS98} can be used to show the existence of exotic 3-spheres in 4-manifolds. However, to the authors' knowledge, the techniques can only produce a pair of such $S^3$ and moreover, the complements of such spheres may not be diffeomorphic. 
\end{rem}

\begin{rem}
\label{rem: intro Baraglia}
In \cite{Ba20}, Baraglia gave examples of exotic embedded surfaces $\Sigma$ in some 4-manifolds with diffeomorphic complements.
It follows from his work that one may find infinitely many exotic embedded $S^1 \times \Sigma$ whose complements are diffeomorphic to each other.
Baraglia's argument is based on a family version of the adjunction inequality.
Our exotic embedded 3-manifolds  have vanishing $b_1$,
and use some vanishing results for family versions of connected sum formula for Seiberg--Witten invariants.
\end{rem}

In \cref{section: Exotic embeddings of 3-manifolds in 4-manifolds}, we generalize \cref{thm: simplest exotic emb} to 3-manifolds other than $S^3$, in particular, we establish the following result. 
 
\begin{thm}
\label{strongly exotic} 
Let $Y$ be one of the following 3-manifolds: 
\begin{itemize}
\item[(i)] a connected sum of elliptic 3-manifolds, or 
    \item[(ii)] a hyperbolic three-manifold labeled by 
\[
0, 2,3,8 , 12, \ldots, 16, 22, 25, 28 ,\ldots, 33, 39, 40, 42, 44, 46, 49
\]
in the Hodgson--Weeks census, \cite{HW}. 
\end{itemize}
Then there exists a smooth closed 4-manifold $X$ and infinitely many embeddings $\{i_n  : Y \to X\}_{n \in \Z}$ that are exotic in the following sense: topologically isotopic (as embeddings) but not smoothly isotopic (as submanifolds).
Moreover, the complements $X \setminus i_n(Y)$ are diffeomorphic to each other. 
\end{thm}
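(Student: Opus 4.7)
The plan is to extend the recipe of \cref{exotic 3-sphere} from separating $3$-spheres to the $3$-manifolds $Y$ listed in (i) and (ii). Three ingredients are required: a closed $4$-manifold $X$ containing $Y$ as a separating hypersurface $X=X_1\cup_Y X_2$; an exotic diffeomorphism $f\in\Diff(X)$, topologically isotopic to the identity, with nonvanishing families Seiberg--Witten invariant $FSW(X,\fraks,f)$; and a families vanishing result asserting that any diffeomorphism smoothly isotopic to one that splits as a product along $Y$ has vanishing $FSW$. Given these, setting $i_n:=f^n\circ i_0$, where $i_0$ is the given embedding of $Y$, yields a family of embeddings that are manifestly topologically isotopic (since $f$ is) and have diffeomorphic complements (via $f^{n-m}$). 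If $i_n$ and $i_0$ were smoothly isotopic, isotopy extension would produce $g\in\Diff(X)$ smoothly isotopic to the identity with $g\circ i_0=i_n$, so that $g^{-1}f^n$ would fix $i_0(Y)$ setwise; after modification inside a collar of $Y$ one can arrange that $g^{-1}f^n$ splits as $\phi_1\cup\phi_2$, and then the families splitting vanishing would contradict $FSW(X,\fraks,f^n)\ne 0$.

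For the first two ingredients, I would start from an exotic diffeomorphism $f_0$ of a closed $4$-manifold $X_2^\circ$ with nonvanishing $FSW$, as furnished by the techniques of \cref{subsection: Families Kronheimer--Mrowka's invariants} together with \cref{exotic 3-sphere}, cut out a neighborhood of an embedded copy of $Y\subset X_2^\circ$ chosen to lie away from the support of $f_0$ to obtain $X_2$ with $\partial X_2=Y$, and cap off by a compact $4$-manifold $X_1$ with $\partial X_1=Y$ chosen so as not to annihilate the families invariant. Extending $f_0$ by the identity across $X_1$ produces $f\in\Diff(X)$, and the persistence of $FSW(X,\fraks,f)\ne 0$ follows from a standard gluing argument combined with vanishing of monopoles on $X_1$, for which one needs positive scalar curvature in case (i) or a Floer-theoretic vanishing in case (ii).

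The third ingredient, the families splitting vanishing, is the crux of the proof, and this is where the restrictions on $Y$ enter. In case (i), a connected sum of elliptic $3$-manifolds carries a metric of positive scalar curvature by the Gromov--Lawson--Schoen--Yau surgery theorem, and indeed a suitable $S^1$-family of PSC metrics; stretching the neck along this family of metrics forces the Seiberg--Witten moduli space on the total space of the family to degenerate to a fiber product across $Y$ that is empty for Spin$^c$ reasons, so that $FSW(X,\fraks,\phi_1\cup\phi_2)=0$ whenever the diffeomorphism splits along $Y$. In case (ii), the hyperbolic manifolds in the list do not carry PSC metrics, and the PSC vanishing must be replaced by a Floer-theoretic vanishing: the list is arranged so that the reduced monopole Floer homology in the grading dictated by $\fraks$ vanishes (or is sufficiently structured), which again forces a neck-stretched moduli space to be empty.

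The main obstacle I anticipate is the uniform verification of the Floer-theoretic vanishing for each manifold in the Hodgson--Weeks census item list, and the compatible choice of the Spin$^c$ structure $\fraks$ on $X$ so that the resulting vanishing propagates to every diffeomorphism decomposing across $Y$, not merely to the identity. In practice this likely reduces to combining the available explicit monopole Floer computations for small-volume hyperbolic $3$-manifolds with a careful setup of the Spin$^c$ structure and perturbation data in the families framework. Once this technical input is secured, the argument proceeds in parallel to case (i) and to the proof of \cref{exotic 3-sphere}.
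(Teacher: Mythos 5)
Your proposal reproduces the overall architecture of the paper's argument faithfully — construct $X = X_1 \cup_Y X_2$, extend an exotic diffeomorphism with nonvanishing families Seiberg--Witten invariant, set $i_n = f^n \circ i_0$, and derive a contradiction from a splitting vanishing theorem — but two of the key technical inputs are misidentified, and a third is elided, in a way that would break the proof as you have written it.

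First, you pass from ``$g^{-1}f^n$ fixes $i_0(Y)$ setwise'' to ``after modification inside a collar of $Y$ one can arrange that $g^{-1}f^n$ splits as $\phi_1\cup\phi_2$'' without addressing the crucial obstruction: the splitting vanishing theorem (the paper's \cref{conn sum vanish family}) requires that the diffeomorphism near the separating hypersurface be a product of an \emph{isometry} of $(Y,h)$ with the identity on the collar, because the neck-stretching argument needs the parametrized family of metrics $g_t$ to satisfy $g_1 = f^*g_0$ with both metrics restricting on the neck to a fixed cylindrical metric $h + dt^2$. Absent that, the moduli-space identification under pullback breaks down. The paper addresses this using the generalized Smale conjecture: Gabai's theorem that $\operatorname{Isom}(Y) \hookrightarrow \operatorname{Diff}(Y)$ is a homotopy equivalence for hyperbolic $Y$ (and the $\pi_0$-Smale conjecture for elliptic $Y$) shows that $f^n|_Y$ can be isotoped to an isometry, and then the isotopy extension theorem propagates this into a collar. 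This step is entirely missing from your proposal, and is not a formality — it is exactly the reason the statement is restricted to admissible geometric $3$-manifolds.

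Second, for the hyperbolic case you replace the required vanishing by ``the reduced monopole Floer homology in the grading dictated by $\fraks$ vanishes,'' but the paper needs the strictly stronger, metric-specific condition that there are no irreducible solutions to the $3$-dimensional Seiberg--Witten equations on $(Y, \frakt, g)$ for the hyperbolic metric $g$ — what the paper calls a \emph{strong L-space}. This is exactly what Lin--Lipnowski verify for the census manifolds in the list, and it is an analytic statement about a particular metric, not a consequence of Floer vanishing: an L-space can a priori still admit cancelling pairs of irreducible solutions for any given metric, in which case the neck-stretched moduli space would not be forced empty. In short, the list is not ``arranged so that Floer homology vanishes''; it is the Lin--Lipnowski list of small hyperbolic manifolds with no irreducible monopoles for the hyperbolic metric. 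Third, for the elliptic case, the existence of an $S^1$-family of PSC metrics interpolating between $h$ and $f^*h$ is not automatic; the paper invokes Bamler--Kleiner's contractibility of the space of PSC metrics, which is the reason the diffeomorphism on $Y$ need not be an isometry in that case. You gesture at ``a suitable $S^1$-family of PSC metrics'' without supplying this input.
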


\begin{rem}
In addition to the use of the families gauge theory, the proof of \cref{strongly exotic} uses two deep results in 3-manifold theory; the generalized Smale conjecture for hyperbolic 3-manifolds proven by Gabai \cite{Ga01} and the connectivity of the space of positive scalar curvature metrics for 3-manifolds proven by Bamler and Kleiner \cite{BaKl19}. 
These results are used to obtain the vanishing of certain parameterized moduli space for the Seiberg--Witten equations. 
The list in (ii) comes from \cite[Figure 1]{LL18} due to F.~Lin and Lipnowski,
which lists small hyperbolic 3-manifolds having no irreducible Seiberg--Witten solution.   
\end{rem}

\begin{rem}
Recently in \cite{AR1,AR2}, Auckly and Ruberman also detected higher-dimensional families of exotic embeddings and diffeomorphisms by using families Yang--Mills gauge theory. 
Also, they detected exotic embeddings of 3-manifolds into $S^4$.

Drouin also detected exotic lens spaces in some 4-manifolds by modifying the argument by Auckly and Ruberman.
\end{rem}

\textit{Organization:} In \cref{Signed Kronheimer--Mrowka invariants}, we review the Kronheimer--Mrowka invariant for 4-manifolds with contact boundary, which has a sign ambiguity and show that certain auxiliary data fixes the sign. 
In \cref{section Families Kronheimer--Mrowka invariant}, we define the families Kronheimer--Mrowka invariants \eqref{intro FKM1} and \eqref{intro FKM1} of 4-manifolds with contact boundary, and the refined families Kronheimer--Mrowka invariant \eqref{intro refined FKM} as well.
We also establish basic properties of these invariants.
In \cref{sectionSeveral vanishing results}, we show several vanishing results for the families Seiberg--Witten and Kronheimer--Mrokwa invariant which can be regarded as family versions of the connected sum formula, Fr\o yshov's vanishing result, and adjunction inequality.
In \cref{section: Construction of diffeomorphisms and non-vanishing results}, we construct some diffeomorphisms of some 4-manifolds with boundary, for which we shall show the families Kronheimer--Mrowka invariants are non-trivial. 
In \cref{sectionProof of Theorem main1}, we give the proof of one of our main theorems, \cref{main1}.
In \cref{section: Exotic embeddings of 3-manifolds in 4-manifolds}, we prove the results on exotic embeddings of 3-manifolds into 4-manifolds.

\begin{acknowledgement} 
The authors would like to thank Patrick Orson, Mark Powell, and Arunima Ray for letting us know about \cite{OP}. The authors would like to express their deep gratitude to Juan Mu\~noz-Ech\'aniz for answering several questions on his paper \cite{J21}. The authors would also like to express their appreciation to Tadayuki Watanabe for answering several questions on his paper and discussing codimension-1 embeddings \cite{Wa20}. The authors also wish to thank Dave Auckly, David Baraglia, John Etnyre, Jianfeng Lin, Ciprian Manolescu, Hyunki Min and Danny Ruberman for answering various questions regarding diffeomorphisms of 4-manifolds and making helpful comments on the draft. 

Nobuo Iida was supported by JSPS KAKENHI Grant Number 19J23048 and the Program for Leading Graduate Schools, MEXT, Japan.
Hokuto Konno was supported by JSPS KAKENHI Grant Numbers 17H06461, 19K23412, and 21K13785.
Anubhav Mukherjee was partially supported by NSF grant DMS-1906414. Masaki Taniguchi was supported by JSPS KAKENHI Grant Number 20K22319, 22K13921 and RIKEN iTHEMS Program.
\end{acknowledgement}

\section{Signed Kronheimer--Mrowka invariants} 
\label{Signed Kronheimer--Mrowka invariants}
We first review the Kronheimer--Mrowka invariant introduced in \cite{KM97}. 
The Kronheimer--Mrowka invariant 
\[
\mathfrak{m}(W, \s, \xi) \in \Z/\{\pm 1\}
\]
is an invariant of a 4-manifold $W$ with a contact boundary $(Y,\xi)$ equipped with a 4-dimensional $\Spinc$ structure $\mathfrak{s}$ compatible with the $\xi$. 

Usually, the notion of Spin/Spin$^c$ structure on an oriented manifold $W$ of dimension $d$ is defined by fixing a metric on $W$ and as a lift of the structure of the $SO(d)$-frame bundle of $W$ to a principal $Spin(d)$- or $Spin^c(d)$-bundle.
Here we note that one can define those notions without using Riemannian metric, which shall be convenient when we consider families of manifolds.
Denote by $GL^{+}(d,\R)$ the group of real square matrices of size $d$ of $\det >0$.
A Spin structure on an oriented $d$-manifold can be defined as a lift of the structure group of the frame bundle from $GL^{+}(d,\R)$ to the double cover $\widetilde{GL}^{+}(d,\R)$.
Similarly, a Spin$^{c}$ structure is also defined using $(\widetilde{GL}^{+}(d,\R) \times S^{1})/\pm1$ instead of $Spin^{c}(d)$.

In the case of the usual Seiberg--Witten invariant for a closed 4-manifold $X$, it is enough to fix a homology orientation of $X$, i.e. an orientation of $H^1(X; \R)\oplus H^+(X; \R)$ to fix a sign of the invariant. However, in Kronheimer--Mrowka's setting, we cannot use such data to give an orientation of the moduli space. In order to improve this, we introduce a two element set
\[
\Lambda (W, \s, \xi) 
\]
depending on a tuple $(W, \s, \xi) $ whose element gives an orientation of the moduli space in Kronheimer--Mrowka's setting.

Let $W$ be a connected compact oriented 4-manifold with connected contact boundary $(Y, \xi)$.
Let $\s$ be a $\Spinc$ structure on $W$ which is compatible with $\xi$.
Pick a contact 1-form $\theta$ on $Y$ and a complex structure $J$ of $\xi$ compatible with the orientation. There is now an unique Riemannian metric $g_1$ on $Y$ such that $\theta$ satisfies $|\theta|=1$, $d\theta=2*\theta$, and $J$ is an isometry for $g|_\xi$, where $*$ is the Hodge star operator with respect to $g_1$.
This can be written explicitly by
\[
g_1=\theta\otimes \theta +\frac{1}{2}d\theta(\cdot, J\cdot)|_\xi.
\]
Define a symplectic form $\omega_0$ on $\R^{\geq 1}\times Y$ by the formula
$\omega_0=\frac{1}{2}d(s^2\theta)$, where $s$ is the coordinate of $\R^{\geq 1}$.
We define a conical metric on $\R^{\geq 1} \times Y$ by 
\begin{align}\label{conical metric} 
g_0 := ds^2 + s^2 g_1. 
 \end{align}

On $\R^{\geq 1} \times Y$, we have a canonical $\Spinc$ structure $\s_0$ , a canonical $\Spinc$ connection $A_0$, a canonical positive Spinor $\Phi_0$.
These are given as follows.
The pair $(g_0, \om_0)$ determines a compatible almost complex structure $J$ on $\R^{\geq 1} \times Y$. The $\Spinc$ structure on $\R^{\geq 1} \times Y$ is given by:
 \[
 \s_0:= ( S^+ = \Lambda^{0,0}_J \oplus  \Lambda_J^{0,2}, S^- = \Lambda_J^{0,1} , \rho :\Lambda^1 \to \Hom (S^+ , S^-) ),
 \]
  where the Clifford multiplication $\rho$ is given by
\[
 \rho  = \sqrt{2} \operatorname{Symbol} (\overline{\partial} + \overline{\partial}^* ) . 
 \]
 (See Lemma 2.1 in \cite{KM97}.)
 The notation $\Phi_0$ denotes 
 \[
 (1,0) \in \Om_{\R^{\geq1} \times Y}^{0,0} \oplus  \Om_{\R^{\geq1} \times Y}^{0,2}= \Gamma (S^+|_{\R^{\geq1} \times Y}).
 \]
 Then the {\it canonical $\Spinc$ connection} $A_0$ on $\s_0$ is uniquely defined by the equation 
 \begin{align}\label{A0}
 D^+_{A_0} \Phi_0= 0
 \end{align}
 on $\R^{\geq 1} \times Y$.
 We write the conical part $\R^{\geq 1} \times Y$ by $ C^+$. 

Let $W^+$ be a non-compact 4-manifold with a conical end 
\[
W^+ := W \cup_Y  (\R^{\geq 1} \times Y).
\]
Fix a smooth extension of $(A_0, \Phi_0)$ on $W^+$.
On $W^+$ define Sobolev spaces
\[
\mathcal{C}_{W^+}=(A_0, \Phi_0)+L^2_{k, A_0 }(i \Lambda^1_{W^+}\oplus S^+_{W^+}) \text{ and } 
\]
\[
{\cV}_{W^+}=L^2_{k-1, A_0 }( i\Lambda^+_{W^+}\oplus S^-_{W^+})
\]
for $k \geq 4$, where $S^+_{W^+}$ and $S^-_{W^+}$ are positive and negative spinor bundles and the Sobolev spaces are given as completions with respect to the following inner products: 
\begin{align}\label{inner}
\langle s_1, s_2\rangle_{L^2_{k, A_0} } := \sum_{i=0}^k \int_{W^+} \langle \nabla^i_{A_0} s_1, \nabla^i_{A_0} s_2  \rangle  \operatorname{dvol}_{W^+}, 
\end{align}
where the connection $\nabla^i_{A_0}$ is  induced  from ${A_0}$ and the Levi-Civita connection.  
The gauge group is defined by 
\[
\G_{W^+} := \left\{ u: W^+\to U(1) | 1- u \in L^2_{k+1} \right\}. 
\]
The action of $\G_{W^+}$ on ${\mathcal{C }}_{W^+}$ is given by 
\[
 u \cdot  (A, \Phi) :=   (A- u^{-1} du , u\Phi).  
\]
Set 
\[
\mathcal{B}_{W^+} :=   \Con_{W^+} /  \G_{W^+}
\] and call it {\it the configuration space}. Note that since $(A_0, \Phi_0)$ is irreducible in the end, one can see every element in $\mathcal{B}_{W^+}$ is irreducible. 

We have {\it the perturbed Seiberg--Witten map} 
\begin{align}\label{FX+}
\begin{split}
&\mathfrak{F}:  {\mathcal{C}}_{W^+}\to {\cV}_{W^+}\\
&(A, \Phi)\mapsto (\frac{1}{2}F^+_{A^t}-\rho^{-1}(\Phi\Phi^*)_0-\frac{1}{2}F^+_{A^t_0}+\rho^{-1}(\Phi_0\Phi^*_0)_0+\eta, D^+_A\Phi).
\end{split}
\end{align}
Here $\eta$ is a generic perturbation decaying $C^r$ exponentially.

We have the infinitesimal action of gauge group at every point $(A, \Phi)\in \mathcal{C}_{W^+}$
\[
\delta_{(A, \Phi)}: L^2_{k+1, A_0 }(i\Lambda^0_{W^+})   \to L^2_{k, A_0 }(i\Lambda^1_{W^+}\oplus S^+)
\]
and the
linearization of the Seiberg--Witten map at $(A, \Phi)\in \mathcal{C}_{W^+}$ 
\[
\mathcal{D}_{(A, \Phi)}\mathfrak{F}: L^2_{k, A_0 }(i \Lambda^1_{W^+}\oplus S^+_{W^+})  \to L^2_{k-1, A_0 }( i\Lambda^+_{W^+}\oplus S^-_{W^+}).
\]
The sum 
\begin{align}\label{gauge slice}
\mathcal{D}_{(A, \Phi)}\mathfrak{F}+\delta^*_{(A, \Phi)}:
L^2_{k, A_0 }(i \Lambda^1_{W^+}\oplus S^+_{W^+})  \to L^2_{k-1, A_0 }(i\Lambda^0_{W^+}\oplus i\Lambda^+_{W^+}\oplus S^-_{W^+})
\end{align}
is a linear Fredholm operator.

The moduli space is defined to be 
\[
M(W, \s, \xi) := \{ (A, \Phi) \in {\mathcal{B}}_{W^+} | {\mathfrak{F}} (A,\Phi) =0 \}.    
\]
It is proven in \cite{KM97}, the moduli space $M(W, \s, \xi)$ is compact. For a suitable class of perturbations, it is proven in \cite{KM97} that $M(W, \s, \xi)$ is a smooth manifold of dimension 
\[
d(W, \s, \xi) = \langle e(S^+ , \Phi_0 |_{\partial W} ) , [W, \partial W] \rangle, 
\]
where $e(S^+ , \Phi_0 |_{\partial W} ) \in H^4(W, \partial W)$ is the relative Euler class of $S^+$ with respect to the section $\Phi_0 |_{\partial W}$ on the boundary.

In order to give orientations of moduli spaces, we need the following lemma: 
\begin{lem}\cite[Theorem 2.4]{KM97}\label{triviality of orientation bundle}The line bundle
\[
\operatorname{det} (\mathcal{D}_{(A, \Phi)}\mathfrak{F}+\delta^*_{(A, \Phi)})  \to \B_{W^+}
\]
is trivial. 
\qed
\end{lem}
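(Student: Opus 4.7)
The plan is to combine two ingredients: the configuration space $\mathcal{C}_{W^+}$ is an affine space, hence contractible, so the determinant line bundle of $L_{(A,\Phi)} := \mathcal{D}_{(A,\Phi)}\mathfrak{F}+\delta^*_{(A,\Phi)}$ is automatically trivial on $\mathcal{C}_{W^+}$; and the gauge action preserves the orientation of each determinant fiber, so triviality descends to $\mathcal{B}_{W^+}$. Since a real line bundle is trivial if and only if it is orientable, establishing orientability of the quotient line is the whole content.

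First I would fix a nowhere-zero section $s$ of $\det(L) \to \mathcal{C}_{W^+}$, using that $\mathcal{C}_{W^+}$ is contractible. Next, I would describe the equivariance: for $u \in \mathcal{G}_{W^+}$, the operator $L_{u\cdot(A,\Phi)}$ is intertwined with $L_{(A,\Phi)}$ by the bundle automorphism which is the identity on the form factors $i\Lambda^{\bullet}$ and complex multiplication by $u$ on the spinor factors $S^\pm$. This presents $L$ as a $\mathcal{G}_{W^+}$-equivariant family of Fredholm operators, and it therefore suffices to show that this equivariance lifts to an action on $\det(L)$ that is fiberwise orientation-preserving.

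The key computation is thus: for any real $L_{(A,\Phi)}$-invariant subspace $V$ of the ambient Sobolev space (in particular $V=\ker L$ or a complement of $\mathrm{im}\,L$), the real determinant of the restriction to $V$ of the block-diagonal action $\mathrm{id}\oplus\, u\cdot$ is $+1$. This follows by filtering $V$ by its intersection with the form summand and its projection to the spinor summand: the form contribution is $+1$ (identity), while the spinor contribution equals $|u|^{2\dim_\C}=+1$ because $u$ is $U(1)$-valued, and multiplication by $u$ is $\mathbb{C}$-linear with respect to the fixed complex structure on $S^\pm$. Hence the $\mathcal{G}_{W^+}$-action preserves the natural orientation on each fiber of $\det(L)$, and the section $s$ descends to a nowhere-zero section of the quotient line bundle on $\mathcal{B}_{W^+}$.

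The main point to watch is that $\mathcal{G}_{W^+}$ may fail to be connected—its components are classified by $H^1(W^+;\mathbb{Z})$, since the condition $1-u\in L^2_{k+1}$ only forces $u\to 1$ at the conical end—so the orientation-preserving property cannot be deduced by a naive path to the identity. The argument above sidesteps this because the complex structure on $S^\pm$ is a fixed, $(A,\Phi)$-independent datum and $u\cdot$ is genuinely $\mathbb{C}$-linear on it, so the fiberwise orientation-preservation is a pointwise algebraic statement valid on every component of $\mathcal{G}_{W^+}$.
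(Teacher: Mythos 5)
The plan you describe in the first paragraph — exploit contractibility of $\mathcal{C}_{W^+}$ and then show the $\mathcal{G}_{W^+}$-action on the determinant line is orientation-preserving — is a legitimate strategy, and you are right that the hard content (after noting $\mathcal{G}_{W^+}$ need not be connected) is the orientation-preservation. But the key computation you offer does not type-check, and repairing it requires the same analytic input that the excision argument (which is how \cite{KM97} actually proves this, and how \cref{det line identification} is constructed in \cref{Signed Kronheimer--Mrowka invariants}) is designed to supply.

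Here is the gap. The block-diagonal map $U = \mathrm{id}\oplus (u\cdot)$ does \emph{not} restrict to an endomorphism of $\ker L_{(A,\Phi)}$ or of a complement of $\mathrm{im}\,L_{(A,\Phi)}$; it carries $\ker L_{(A,\Phi)}$ to $\ker L_{u\cdot(A,\Phi)}$, a genuinely different finite-dimensional subspace (the stabilizer of $(A,\Phi)$ is trivial here because every configuration in $\mathcal{C}_{W^+}$ is irreducible). So ``the real determinant of the restriction of $U$ to $V$'' is not a number; it is an isomorphism between two lines, and its ``sign'' is precisely the thing you are trying to determine, namely the comparison against the trivialization $s$. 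Saying the spinor contribution is $|u|^{2\dim_{\mathbb{C}}}$ presupposes that you are computing the determinant of an endomorphism of a complex vector space, which you are not. The statement you need is the following: the isomorphism $\det L_{(A,\Phi)}\to \det L_{u\cdot(A,\Phi)}$ induced by $U$ agrees (up to positive scalar) with parallel transport along a path in the contractible base with respect to $s$. That is a holonomy statement, not a pointwise one, and it cannot be read off fibrewise.

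The standard way to convert your $\mathbb{C}$-linearity observation into an actual proof is to deform $L_{(A,\Phi)}$ through Fredholm operators to a split operator $(d^++d^*)\oplus D_A^+$, observe that the deformation intertwines the $\mathcal{G}_{W^+}$-conjugations, and then use that $D_A^+$ is $\mathbb{C}$-linear so its determinant line carries a gauge-invariant complex orientation, while the form part is gauge-invariant by inspection. But on the cone-like end 4-manifold $W^+$ this deformation is not obviously available: Fredholmness of $L_{(A,\Phi)}$ in the Kronheimer--Mrowka setting leans on the nonvanishing of $\Phi_0$ on the end, and turning off the $\Phi$-dependent terms can destroy Fredholmness in exactly the regime where one would want to see the split operator. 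This is the real reason the argument in \cite{KM97}, and likewise the orientation comparison carried out via \cref{excision} in \cref{Signed Kronheimer--Mrowka invariants}, proceeds by excision: cap off $W^+$ by an almost-complex bound $Z$, reduce to the closed $\Spinc$ manifold $W\cup Z$, where the determinant line is oriented by the homology orientation together with the complex orientation of the Dirac part, and then transport that orientation back across the cone via the excision isomorphism. Your write-up is missing both that reduction and the justification for why a direct deformation to the complex-linear model is permissible on a cone-like end.
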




Here we give data to fix this sign. We first give a definition of an orientation set.
\begin{defn} 
Define the two element set by 
\[
\Lambda (W, \s, \xi ): = \{ \text{ orientations of the determinant line bundle of \eqref{gauge slice} over $\mathcal{B}_{W^+} $ }  \} . 
\]
\end{defn}
Note that $\Lambda (W, \s, \xi )$ does not depend on the choices of elements in $\mathcal{B}_{W^+} $ since $\mathcal{B}_{W^+} $ is connected. 
Once we fix an element in $\Lambda (W, \s, \xi )$, we have an induced orientation on the moduli space $M(W, \s, \xi)$.  
We also give another description of $\Lambda (W, \s, \xi )$ using almost complex 4-manifolds bounded by $(Y, \xi)$. 
We use the following existence result of almost complex 4-manifolds bounded by a given 3-manifold.
The proof is written in the proof of Proposition 28.1.2 of \cite{KM07}, for example.
\begin{lem}
Let $Y$ be a closed oriented 3-manifold and $\xi$ be an oriented 2-plane field on $Y$.
Then there is an almost complex 4-manifold $(W, J)$ bounded by $(Y, \xi)$, which means $\partial W=Y$ and $JTY \cap TY = \xi$ up to homotopy of 2-plane fields. 
\qed
\end{lem}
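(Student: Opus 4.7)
The plan is to reduce the problem to finding a Stein filling for a contact representative of $\xi$, exploiting the fact that $JTY \cap TY$ only needs to agree with $\xi$ up to homotopy of $2$-plane fields.

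First I would invoke the classical theorem of Lutz (extending Martinet): on any closed oriented $3$-manifold $Y$, every homotopy class of oriented $2$-plane field contains a positive contact structure. Replacing $\xi$ by such a representative, I may assume that $\xi$ itself is a contact structure.

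Next, by Eliashberg's theorem on Stein fillability (alternatively, by the Loi--Piergallini / Akbulut--Ozbagci construction of allowable Lefschetz fibrations over $D^{2}$ supporting a given contact structure), the contact $3$-manifold $(Y, \xi)$ bounds a Stein $4$-manifold $W$. A Stein domain is by definition a complex manifold, so it carries an integrable almost complex structure $J$. By the very definition of the contact structure induced on the strictly pseudoconvex boundary of a Stein domain, the bundle of complex tangencies $JTY \cap TY$ coincides with $\xi$ as oriented $2$-plane fields. Hence $(W, J)$ satisfies the conclusion.

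The step I expect to require the most care is the appeal to Lutz's theorem at the level of $2$-plane fields rather than $\Spinc$-structures: one must verify that the Lutz twist construction can adjust the finer $3$-dimensional Gompf invariant that distinguishes homotopy classes of $2$-plane fields within a fixed $\Spinc$-class. A route that avoids this subtlety is to first produce some almost complex $4$-manifold $(W_{0}, J_{0})$ bounded by $Y$ using the vanishing $\Omega^{\Spinc}_{3} = 0$ together with a standard handle-attachment argument promoting a $\Spinc$-structure to an almost complex structure, and then modify the induced boundary plane field by connect-summing $W_{0}$ with copies of $\CP^{2}$ or $\overline{\CP^{2}}$ to correct any discrepancy with $\xi$. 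Either route yields the lemma.
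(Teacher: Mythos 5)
Your primary route breaks at the appeal to ``Eliashberg's theorem on Stein fillability'': there is no theorem stating that every contact $3$-manifold bounds a Stein $4$-manifold, and in fact the claim is false. After replacing $\xi$ by the Lutz/Martinet contact representative of its homotopy class, the contact structure you obtain will in general be overtwisted, and a theorem of Gromov and Eliashberg says that overtwisted contact structures are not symplectically fillable at all, let alone Stein fillable. (The Loi--Piergallini and Akbulut--Ozbagci results you cite go the other way: they describe Stein domains as Lefschetz fibrations, not assert that a given contact manifold admits one.) You also identify the wrong step as the one requiring care: Lutz's theorem is already a statement about homotopy classes of $2$-plane fields, so passing from $\xi$ to a contact representative is fine; the genuine obstruction is the non-fillability just described. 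Fortunately, the lemma asks for much less than a Stein filling --- only an almost complex $4$-manifold with boundary --- so the overkill in your first route is exactly what kills it.

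Your fallback route is the correct one, and it is essentially what the paper's cited source does: the paper's proof is a reference to Proposition 28.1.2 of Kronheimer--Mrowka's book, whose argument proceeds by first producing a $\Spinc$ $4$-manifold bounded by $(Y,\mathfrak{s}_\xi)$ (using $\Omega^{\Spinc}_3 = 0$), promoting the $\Spinc$ structure to an almost complex structure away from a point and killing the residual $H^4$ obstruction by connect-summing with $\pm\CP^2$, and then matching the remaining $3$-dimensional homotopy invariant of $2$-plane fields (the Gompf $\theta$-invariant within a fixed $\Spinc$ class) by further connect sums. If you simply drop the Lutz/Stein paragraph entirely and fill in the details of your second paragraph, you recover the intended proof.
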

Using this lemma, we also define another two element set. 
\begin{defn} For a fixed almost complex 4-manifold $(Z, J)$ bounded by  $(-Y, \xi)$, we define 
\[
\Lambda (W, \s, \xi, Z, J )
\]
to be the two-element set of trivializations of the orientation line bundle for the linearized equation with a slice on the closed Spin$^c$ 4-manifold $(W \cup Z, \s \cup \s_J)$, where $\s_J$ is the Spin$^c$ structure determined by $J$. 
\end{defn} 
By its definition, $\Lambda (W, \s, \xi, Z, J )$ can be regarded as the set of homology orientations of the closed 4-manifold $W \cup Z$. 

We see behavior of $\Lambda (W, \s, \xi, Z, J )$ under the changes of $(Z, J)$.
The excision argument enables us to show the following: 
\begin{lem} 
For any two choices of almost complex bounds $(Z, J)$ and $(Z', J')$, one has a canonical identification 
\[
\Lambda(W, \s, \xi, Z, J ) \cong \Lambda(W, \s, \xi, Z', J' ). 
\]
\end{lem}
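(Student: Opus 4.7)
The plan is to use excision (cutting-and-regluing) for determinant line bundles of Fredholm operators, combined with the fact that a closed almost complex 4-manifold has a canonical homology orientation.

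First, I would form the closed 4-manifold
\[
N = Z' \cup_Y (-Z)
\]
by gluing $Z'$ (with boundary $-Y$) to $-Z$ (with boundary $Y$) along the shared underlying 3-manifold. Since $J'$ on $Z'$ and the almost complex structure on $-Z$ naturally induced by $J$ both restrict, up to homotopy, to the oriented 2-plane field $\xi$ on the common boundary, these combine (after a collar homotopy) into an almost complex structure $\widetilde J$ on $N$. The analogous construction applied to $Z$ itself yields the double $D(Z) = Z \cup_Y (-Z)$, also a closed almost complex 4-manifold. As closed almost complex 4-manifolds both $N$ and $D(Z)$ carry canonical homology orientations: equivalently, canonical trivializations of the determinant lines of their linearized $\Spinc$-equations with slice, where the Dirac part is canonically oriented by complex linearity and the $d^*+d^+$ part acquires an orientation from the Hodge decomposition induced by the almost complex structure.

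Second, I would apply the excision principle for determinant line bundles. Cutting the disjoint union $(W\cup Z)\sqcup N$ along the two copies of $Y$ and regluing produces $(W\cup Z')\sqcup D(Z)$, and this cut-and-reglue yields a canonical isomorphism
\[
\det(W\cup Z)\otimes\det(N) \;\cong\; \det(W\cup Z')\otimes\det(D(Z))
\]
of determinant lines of the linearized Seiberg--Witten operators with slice. Passing to orientations and inserting the canonical trivializations of $\det(N)$ and $\det(D(Z))$ from the first step produces the desired canonical bijection
\[
\Lambda(W,\s,\xi,Z,J)\;\cong\;\Lambda(W,\s,\xi,Z',J')
\]
of $\Z/2$-torsors.

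Third, I would check that this identification is truly canonical: independent of the collar homotopies used to build $\widetilde J$, independent of the generic perturbation used in setting up the slice, and transitive with respect to a further replacement $(Z',J')\rightsquigarrow (Z'',J'')$. Transitivity reduces to the corresponding additivity of the excision identity on a triple disjoint union, while independence of auxiliary data follows from \cref{triviality of orientation bundle} together with the fact that orientations are locally constant on connected families of Fredholm operators.

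The main obstacle will be making the excision identification honest: the cleanest way is to stretch the neck along $Y$, so that the operators on $W\cup Z$ and $N$ asymptotically decouple into cylindrical- or conical-end operators on the pieces $W$, $Z$, $Z'$, and $-Z$, reducing the required identification to additivity of indices (and hence determinant lines) on disjoint pieces. A secondary subtlety is a careful sign bookkeeping---one must verify that the trivialization of $\det(N)$ coming from its almost complex structure matches the one predicted by the excision formula---but once the Dirac factor is split off by its canonical complex orientation, this reduces to the standard fact that the Hodge-theoretic orientation of $\det H^1\otimes\det H^+$ on a closed almost complex 4-manifold is invariant under deformation of the almost complex structure.
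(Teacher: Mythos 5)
Your overall strategy---use excision for determinant lines of Fredholm operators, then kill off the auxiliary closed factors using the canonical orientations that closed almost complex $4$-manifolds carry---is exactly the paper's idea, and your excision bookkeeping ($(W\cup Z)\sqcup N \rightsquigarrow (W\cup Z')\sqcup D(Z)$) is algebraically the correct cut-and-reglue pattern. However, there is a genuine gap in the specific closing-off you chose. You form $N = Z'\cup_Y(-Z)$ and $D(Z)=Z\cup_Y(-Z)$ and claim they are almost complex, citing ``the almost complex structure on $-Z$ naturally induced by $J$.'' But an almost complex structure on a $4$-manifold determines its orientation (in contrast to dimension $2$, where replacing $J$ by $-J$ flips it, in dimension $4$ replacing $J$ by $-J$ preserves it), so $J$ on $Z$ does \emph{not} induce an almost complex structure on the orientation-reversal $-Z$, and no natural one exists. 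Since your argument crucially uses the canonical trivializations of $\det(N)$ and $\det(D(Z))$ coming from the almost complex structure, the construction breaks down at precisely this point; one would at best need to choose some almost complex structure on $-Z$ inducing $\xi$ at the boundary, and ``natural'' is gone.

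The paper's remedy is to introduce a \emph{third} almost complex $4$-manifold $(Z_1,J_1)$ bounded by $(Y,\xi)$ --- note the opposite boundary orientation from $Z, Z'$ --- and then apply the excision theorem with $A_1=W$, $B_1=Z$, $A_2=Z_1$, $B_2=Z'$. Excision yields
\[
\det(W\cup Z)\otimes\det(Z_1\cup Z') \cong \det(W\cup Z')\otimes\det(Z_1\cup Z),
\]
and now \emph{both} auxiliary closed manifolds $Z_1\cup Z'$ and $Z_1\cup Z$ genuinely carry almost complex structures (the orientations and boundary $2$-plane fields match on the gluing region), hence canonical trivializations. A further excision argument with a varying $Z_1$ shows the resulting identification is independent of the choice of $Z_1$. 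Replacing your $-Z$ by such a $Z_1$ throughout repairs your proof and makes it coincide with the paper's.
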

\begin{proof}
This follows from an excision argument. Take an almost complex 4-manifold $Z_1$ bounded by $(Y, \xi)$.
We apply \cref{excision} by putting $A_1 = W$, $B_1 = Z$, $A_2=Z_1$, $B_2=Z' $, 
\[
D_1=\text{The linearization of the Seiberg--Witten map with slice on }X_1 = W \cup Z, \text{ and } 
\]
\[
D_2=\text{The linearization of the Seiberg--Witten map with slice on }X_2 = Z_1 \cup Z'.
\]
By \cref{excision}, we have an isomorphism 
\[
\det D_1 \otimes \det D_2\to 
\det \wt{D}_1 \otimes \det \wt{D}_2
\]
up to homotopy. Since the Spin$^c$ structures on $X_2 $ and $\wt{X}_2$ are induced by almost complex structures, $\det {D}_2$ and $\det \wt{D}_2$ has a canonical trivialization. 
So, we obtain a canonical isomorphism 
\[
\det D_1 \to \det \wt{D}_1. 
\]
This gives a correspondence between $\Lambda(W, \s, \xi, Z, J )$ and $\Lambda(W, \s, \xi, Z', J' )$. 
\end{proof}


For a Spin$^c$ 4-manifold with countact boundary $(W,  \xi)$, we introduced two orientations sets 
\[
\Lambda(W, \s, \xi, Z, J ) \text{ and }\Lambda (W,\s,   \xi ). 
\]
We can define a natural correspondence between these orientation sets. 
Take an almost complex bound $Z_1$ of $(Y, \xi)$.
We again apply \cref{excision} by putting $A_1=W$, $B_1= C^+$, $A_2 = Z_1$ and $B_2 =Z$ 
\[
D_1=\text{The linearization of the Seiberg--Witten map with slice on }X_1 = W \cup C^+, \text{ and } 
\]
\[
D_2=\text{The linearization of the Seiberg--Witten map with slice on }X_2 = Z_1 \cup Z.
\]
By \cref{excision}, we have an isomorphism 
\[
\det D_1 \otimes \det D_2\to 
\det \wt{D}_1 \otimes \det \wt{D}_2. 
\]
Since the Spin$^c$ structures on $X_2 $ and $\wt{X}_2$ are induced by almost complex structures, $\det {D}_2$ and $\det \wt{D}_2$ has a canonical trivialization. 
So, we obtain a canonical isomorphism 
\[
\det D_1 \to \det \wt{D}_1. 
\]
This gives a bijection
\[
\psi: \Lambda(W, \s, \xi, Z, J ) \to \Lambda (W, \s , \xi ). 
\]
A similar proof enables us to show $\psi$ does not depend on the choices of $Z_1$.
\begin{lem}
There is a canonical one-to-one correspondence 
\begin{align}\label{det line identification}
\psi: \Lambda(W, \s, \xi, Z, J ) \to \Lambda (W, \s, \xi ). 
\end{align}
\end{lem}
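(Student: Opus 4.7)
The plan is to realize $\psi$ exactly as the excision construction sketched immediately before the lemma statement, and then verify that the bijection produced in this way is independent of all the auxiliary data entering the excision, in particular of the auxiliary almost complex filling $Z_1$ of $(Y,\xi)$.

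First I would formalize the excision setup. With $A_1 = W$, $B_1 = C^+$, $A_2 = Z_1$, $B_2 = Z$, the two closed (or cylindrical end) 4-manifolds built on the two sides of the excision are $X_1 = W \cup C^+ = W^+$ and $X_2 = Z_1 \cup Z$, while the swapped configurations are $\widetilde X_1 = W \cup Z$ and $\widetilde X_2 = Z_1 \cup C^+$. Let $D_i, \widetilde{D}_i$ denote the linearized Seiberg--Witten operators with gauge slice on these, as defined in \eqref{gauge slice}. Applying \cref{excision} produces a canonical (up to homotopy) isomorphism
\[
\det D_1 \otimes \det D_2 \;\xrightarrow{\ \cong\ }\; \det \widetilde{D}_1 \otimes \det \widetilde{D}_2
\]
of real line bundles over the corresponding configuration spaces. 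Since both $X_2$ and $\widetilde X_2$ are equipped with $\Spinc$ structures induced from almost complex structures, $\det D_2$ and $\det \widetilde{D}_2$ carry canonical trivializations arising from the complex linearity of the Dirac-type part of the operator together with the canonical (complex) orientation of the self-dual harmonic forms on an almost complex 4-manifold. Feeding these canonical trivializations into the excision isomorphism yields a canonical isomorphism $\det D_1 \cong \det \widetilde{D}_1$, and passing to orientations gives the claimed bijection
\[
\psi : \Lambda(W,\s,\xi,Z,J) \longrightarrow \Lambda(W,\s,\xi).
\]

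Next I would check that $\psi$ is independent of the auxiliary choice of $Z_1$. Given two such fillings $Z_1, Z_1'$, both equipped with compatible almost complex structures, I would apply \cref{excision} a second time to the pair $(Z_1 \cup C^+,\, Z_1' \cup C^+)$ (or to $Z_1 \cup Z$ versus $Z_1' \cup Z$), reducing the comparison of the two resulting trivializations to the statement that the canonical trivialization on a closed almost complex $\Spinc$ 4-manifold is invariant under replacing one almost complex filling of a contact boundary by another; this in turn follows by a third excision against a common almost complex bound and the well-definedness of the canonical complex orientation.

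The main obstacle will be the bookkeeping of signs: the excision isomorphism is only canonical up to homotopy, so I must verify that the various choices (of homotopy class in \cref{excision}, of Sobolev completions for $D_i$ and $\widetilde{D}_i$, and of deformation path between two almost complex fillings) fit together coherently so that no residual $\pm 1$ ambiguity remains. Concretely, this amounts to showing that any closed $\Spinc$ 4-manifold $(Z_1 \cup Z_1', \s_J \cup \s_{J'})^{\,\mathrm{op}}$ formed from two almost complex bounds inherits the same orientation of its determinant line whether one uses the canonical trivialization directly or transports it through the excision; this is a standard but delicate consequence of the compatibility of the complex-linear trivialization with the stabilization by almost complex pieces. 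Once this coherence is established, both the construction of $\psi$ and its independence of $Z_1$ follow in a single unified excision argument, completing the proof.
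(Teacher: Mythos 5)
Your proposal matches the paper's proof: the paper also sets $A_1=W$, $B_1=C^+$, $A_2=Z_1$, $B_2=Z$, invokes the excision isomorphism of determinant lines, uses the canonical trivializations on the almost complex pieces $X_2=Z_1\cup Z$ and $\widetilde X_2=Z_1\cup C^+$, and disposes of the dependence on $Z_1$ by a second application of the same excision argument. The paper is terser on the independence step and on sign coherence, but the route is the same.
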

We use this alternative description of the orientation set when we define signed families Kronheimer--Mrowka invariants.

\begin{rem}
For a symplectic filling $(W, \om )$, one can choose a canonical element in $\Lambda(W, \s_\om , \xi)$ by choosing an orientation coming from a compatible almost complex structure, where $\s_\om$ is the Spin$^c$ structure coming from $\om$.  
\end{rem}


 \begin{defn}
 For a fixed element in $\lambda \in \Lambda(W, \s, \xi)$, 
 we define the {\it signed Kronheimer--Mrowka invariant} by 
 \[
 \mathfrak{m}(W,\s, \xi, \lambda) 
 := \begin{cases} \# {M}(W,\s, \xi) \in \Z \text{ if } \langle e(S^+ , \Phi_0 |_{\partial W} ) , [W, \partial W] \rangle =0  \\ 
  0 \in \Z \text{ if } \langle e(S^+ , \Phi_0 |_{\partial W} ) , [W, \partial W] \rangle \neq 0. 
 \end{cases} 
 \]
\end{defn}
We often abbreviate $ \mathfrak{m}(W,\s, \xi, \lambda)$ by $ \mathfrak{m}(W,\s, \xi)$.

The above definition enables us to define a map 
\[
 \mathfrak{m} : \operatorname{Spin}^c(W, \xi ) \to \Z  
\]
for a fixed element in $\Lambda(W, \s, \xi)$, where $\operatorname{Spin}^c(W, \xi )$ is the set of isomorphism classes of all Spin$^c$ structures which are compatible with $\xi$ on the boundary.



\section{Families Kronheimer--Mrowka invariant}
\label{section Families Kronheimer--Mrowka invariant}

We introduce families Kronheimer--Mrowka invariants in this section. 
We follow the construction of families Seiberg--Witten invariants \cite{Rub98,LL01}.

Let $W$ be a connected compact oriented 4-manifold with contact boundary $Y$.
It is possible to consider a version of our invariant for disconnected $Y$: In that case, we need to replace $\pi_1(\Xi^{\mathrm{cont}}(Y), \xi)$ that appeared in the introduction with the direct sum of such fundamental groups for all components of $Y$.
For simplicity, we shall suppose that $Y$ is connected in this paper.

As explained in \cref{Signed Kronheimer--Mrowka invariants}, we define the notion of  Spin structure/Spin$^{c}$ structure without using Riemannian metric, by considering
$\widetilde{GL}^{+}(d,\R)$.
If a Spin structure or a Spin$^{c}$ structure $\fraks$ is given on $W$, define 
$\Aut(W,\fraks)$ to be the group of automorphisms of the Spin$^c$ manifold $(W,\fraks)$.

Let $\Xi^{\mathrm{cont}}(Y)$ be the space of contact structures on $Y$ equipped with the $C^\infty$-topology, which is an open subset of the space of oriented 2-plane distributions.
Let $\tilde{\Gamma} : B \to \Xi^{\mathrm{cont}}(Y)$ be a smooth map. We denote the smooth homotopy class of $\tilde{\Gamma}$ by $\Gamma$.
Let $W \to E \to B$ be a fiber bundle over a closed smooth manifold $B$ of dimension $n$.
Let $\Diff^+(W,[\fraks])$ denote the group of orientation preserving diffeomorphisms fixing the isomorphism class of $\fraks$, and let
$\Diff(W, [\fraks],\del)$ denote the group of diffeomorphisms fixing boundary pointwise and the isomorphism class of $\fraks$.
Let $\Aut_{\del}(W,\fraks)$ denote the inverse image of $\Diff(W, [\fraks],\del)$ under the natural surjection
\[
\Aut(W,\fraks) \to \Diff^+(W,[\fraks]).
\]
Suppose that the structure group of $E$ reduces to $\Aut_{\del}(W,\fraks)$.
Namely, $E$ is a fiber bundle whose restriction to the boundary is a trivial bundle of 3-manifolds, and is equipped with a fiberwise Spin$^c$ structure $\fraks_E$.
Suppose also that
\[
\s_{\tilde{\Gamma}(b)} =  \s_E|_{E_b}
\]
on each fiber.

For these data, we define the families Kronheimer--Mrowka  invariant 
\[
FKM(E, \Gamma) = FKM(E, W, \s_E, \Gamma) \in \Z_2.  
\]
This invariant is trivial by definition unless
when $\langle e(S^+, \Phi_0), [W, \partial W]\rangle+n=0$
where $e(S^+, \Phi_0)$ is the relative Euler class with respect to a special section $\Phi_0$, which we introduced in the previous section.

When $n=1$, we can define a signed family Kronheimer--Mrowka invariant 
\[
FKM(E) \in \Z   
\]
under a certain assumption on determinant line bundles.

\subsection{Notation}
Let $(Y, \xi)$ be a closed contact 3-manifold. 
We use the following geometric objects used in \cref{Signed Kronheimer--Mrowka invariants}: 
\begin{itemize}
 \item a contact 1-form $\theta$ on $Y$, 
 \item a complex structure $J$ of $\xi$ compatible with the orientation, 
 \item the Riemannian metric $g_1$ on $Y$ such that $\theta$ satisfies $|\theta|=1$, $d\theta=2*\theta$, 
 \item the symplectic form $\omega_0$ on $\R^{\geq 1}\times Y$, 
 \item the conical metric $g_0$ on $\R^{\geq 1} \times Y$, 
 \item the canonical $\Spinc$ structure $\s_0$ on $\R^{\geq 1} \times Y$, 
 \item the canonical positive Spinor $\Phi_0$ on $\R^{\geq 1} \times Y$, and
 \item the canonical $\Spinc$ connection $A_0$ on $\s_0$. 
\end{itemize}
Let $(W, \s)$ be a connected compact oriented $\Spinc$ 4-manifold with connected contact boundary $(Y, \xi)$. 

Assume that a trivialization of $E|_{\del}$, the fiberwise boundary of $E$, is given.
From this assumption, we may further suppose that there is a trivialization of a family of collar neighborhoods of the family of the boundaries $E|_{\del}$.
This is because the group of diffeomorphisms of $W$ fixing boundary pointwise is weakly homotopy equivalent to the group of diffeomorphisms of $W$ that are the identity near $\del W$ (see, e.g. \cite[Theorem 5.3.1]{Kup19}).
Let $W^+$ be a non-compact 4-manifold with a conical end defined in \cref{Signed Kronheimer--Mrowka invariants}. 
We define a fiber bundle 
\[
W^+ \to E^+ \to B
\]
whose fiber is $\Spinc$ 4-manifold with conical end  obtained from $W \to E \to B$ by considering $W^+ = W \cup_Y  (\R^{\geq 1} \times Y)$ on each fiber.

From now on, we will explain auxiliary data that are needed to define the family Kronheimer--Mrowka invariant. These data consist of choices of a contact form, a complex structure on a contact plane, a compatible Riemann metric, an extension of the canonical connection and the canonical spinor, which are denoted by $\mathcal{Q}$.
In addition, we also need to fix choices of a weight function and a perturbation, which are again denoted by $\mathcal{R}$. The main point here is that the set of these auxiliary data is non-empty and contractible, and thus the cobordism class of the Seiberg--Witten moduli space does not depend on the choices of such additional data. Although it is not so hard to verify it, for the readers let us carefully write the spaces of such additional data. 

Let $\mathcal{Q}(Y, W, \fraks,\xi)$ be the set of tuples
\[
(\theta, J, g,A_0^W, \Phi_0^W),
\]
\begin{itemize}
\item $\theta$ is a contact form for the contact structure $\xi$,
\item $J$ is an complex structure on the contact structure $\xi$ compatible with orientation.
\item
$g$ is a smooth extension  of the canonical metric for $(\xi,\theta,J)$ on the conical end to the whole manifold $W$,
\item 
$(A^W_0, \Phi_0^W)$ is a smooth extension of the canonical configuration $(A_0, \Phi_0)$ on the conical end to the whole manifold $W$.
\end{itemize}
Varying over $\Xi^{\mathrm{cont}}(Y)$, we obtain a fiber bundle
$\mathcal{Q}(Y, W, \fraks) \to \Xi^{\mathrm{cont}}(Y)$
with fiber $\mathcal{Q}(Y, W, \fraks,\xi)$.

Let $\mathcal{P}(Y, W, \xi, g)$ be the set of pairs
$(\sigma, \eta)$,
where 
\begin{itemize}
\item
$\sigma$ is a smooth proper extension of the $\R^{\geq 1}$ coordinate of the conical end to the whole manifold $W$, and 
\item
$\eta$ is an imaginary valued $g$-self-dual 2-form that belongs to $e^{-\epsilon_0 \sigma}C^r(i\Lambda^{+_g})$ for some $\epsilon_0>0$ and $r\geq k$, where $e^{-\epsilon_0 \sigma}C^r(i\Lambda^{+_g})$ denotes the completion of the vector space of compactly supported smooth sections of $i\Lambda^{+_g}$ with respect to the norm: 
\[
\| s \| := \| e^{-\epsilon_0 \sigma} s \|_{C^r (W^+; i\Lambda^{+_g} ) }.  
\]
\end{itemize}
Varying over the set of $g$,
we obtain a fiber bundle $\mathcal{R}(Y,W,\fraks,\xi) \to \mathcal{Q}(Y,W,\fraks,\xi)$ with fiber $\mathcal{P}(Y, W,\xi, g)$, which is independent from $(A_0^W, \Phi_0^W)$-component.
Varying over $\Xi^{\mathrm{cont}}(Y)$, we obtain a fiber bundle $\mathcal{R}(Y,W,\fraks) \to \Xi^{\mathrm{cont}}(Y)$ with fiber $\mathcal{R}(Y,W,\fraks,\xi)$, which covers $\mathcal{Q}(Y,W,\fraks) \to \Xi^{\mathrm{cont}}(Y)$.

Except for $\eta$, we consider the $C^\infty$-topology for the above data. For $\eta$, we equip weighted $C^r$-topology.
Since the total space of a fiber bundle with contractible fiber and base is also contractible, we have that $\mathcal{R}(Y, W, \fraks, \xi)$ is contractible.

The group $\Aut_\del(W,\fraks)$ acts on the total space $\mathcal{R}(Y, W, \fraks)$ via pull-back.
Thus $E$ induces an associated fiber bundle $E_\mathcal{R} \to B$ with fiber $\mathcal{R}(Y, W, \fraks)$.
Since the image of $\Aut_\del(W,\fraks)$ under the natural map
$\Aut(W,\fraks) \to \Diff(W,[\fraks])$ is contained in $\Diff(W,\del)$, whose restriction to the boundary acts trivially on $\Xi^{\mathrm{cont}}(Y)$, the map $\mathcal{R}(Y, W, \fraks) \to \Xi^{\mathrm{cont}}(Y)$ induces a map $E_\mathcal{R} \to \Xi^{\mathrm{cont}}(Y)$.
Define $\pi : E_\mathcal{R} \to B \times \Xi^{\mathrm{cont}}(Y)$ be the product of these natural maps $E_\mathcal{R} \to B$ and $E_\mathcal{R} \to \Xi^{\mathrm{cont}}(Y)$.
Then each fiber of $\pi$ is homeomorphic to $\mathcal{R}(Y, W, \fraks, \xi)$.
The fiber bundle
\[E_{\mathcal{R}}^{\tilde{\Gamma}} :=  (\id,\tilde{\Gamma})^\ast E_{\mathcal{R}} \to B
\]
the pull back of $\pi : E_{\mathcal{R}} \to B \times \Xi^{\mathrm{cont}}(Y)$ under $(\id,\tilde{\Gamma}) : B \to B \times \Xi^{\mathrm{cont}}(Y)$, has fiber homeomorphic to $\mathcal{R}(Y, W, \fraks, \xi)$.
Since $\mathcal{R}(Y, W, \fraks, \xi)$ is contractible, the space of sections of $E_{\mathcal{R}}^{\tilde{\Gamma}} \to B$ is non-empty and contractible.

In a similar manner, we can define a fiber bundle $E_{\mathcal{Q}}^{\tilde{\Gamma}} \to B$ with fiber $\mathcal{Q}(Y, W, \fraks,\xi)$, associated with $E$ and $\Gamma$.
We first fix a section 
\[
s^\mathcal{Q} = (\theta_b,J_b, g_b, A_{0, b}, \Phi_{0,b})_{b \in B}   : B \to E_{\mathcal{Q}}^{\tilde{\Gamma}} 
\]
which determines the following data: 
\begin{itemize}
\item a fiberwise contact form $\theta_b$, 
\item a fiberwise complex struxture $J_b$, 
    \item a fiberwise Riemann metric $
\{g_b\}_{b \in B}$ on $E^+$ such that $g_b|_{\R^{\geq 1} \times Y}=g_{0,b}$, and
\item a smooth family of smooth extensions $(A_{0,b}, \Phi_{0,b})$ of $(A_0, \Phi_0)$ on each fiber.  
\end{itemize}
Here $g_{0,b}$ is the metric on $Y$ depending on $J_b$ and $\theta_b$ introduced in the previous section. 
Consider the following functional spaces on each fiber $E^+_b$:
\[
\Con_{W^+,b}=(A_{0, b}, \Phi_{0, b})+L^2_{k, A_{0,b}, g_b }(i \Lambda^1_{W^+}\oplus S^+_{W^+}) \text{ and } 
\]
\[
{\cV}_{W^+,b}=L^2_{k-1, A_{0,b}, g_b }( i\Lambda^+_{W^+}\oplus S^-_{W^+}).
\]
These give Hilbert bundles: 
\[
{\mathbb{U}}_{E^+}( s^\mathcal{Q}):= \bigcup_{b \in B} \Con_{W^+,b}
\text{ and }
{\mathbb{V}}_{E^+}( s^\mathcal{Q}) := \bigcup_{b \in B} {\cV}_{W^+,b}.
\]

For the precise definitions of these Sobolev spaces see \cref{Signed Kronheimer--Mrowka invariants}.
The gauge group 
\[
\G_{W^+} := \Set{ u: W^+\to U(1) | 1- u \in L^2_{k+1} } 
\]
is defined and it
acts on $\mathbb{U}_{E^+}$ preserving fibers in \cref{Signed Kronheimer--Mrowka invariants}. 
We define a family version of the configuration space by 
\begin{align}\label{conf family}
\B_{E^+}( s^\mathcal{Q})  : = \mathbb{U}_{E^+} / \G_{W^+}. 
\end{align}

Now we also choose a section 
\[
s^\mathcal{R}= (\theta_b,J_b, g_b, A_{0, b}, \Phi_{0,b}, \sigma_b, \eta_b)_{b \in B} : B \to E^\Gamma_\mathcal{R}
\]
which is compatible with the fixed section $s^\mathcal{Q}$, i.e. the first five components of $s^\mathcal{R}$ coincide with these of $s^\mathcal{S} = (\theta_b,J_b, g_b, A_{0, b}, \Phi_{0,b})_{b \in B}$. 
For each fiber $E^+_b$, we have the perturbed Seiberg--Witten map
\begin{align}\label{FX+}
\begin{split}
&{\mathfrak{F}}_b  :  \Con_{E^+,b}\to {\cV}_{E^+,b}\\
&(A-A_{0, b}, \Phi-\Phi_{0,b})\mapsto (\frac{1}{2}F^+_{A^t}-\rho^{-1}(\Phi\Phi^*)_0-\frac{1}{2}F^+_{A^t_{0,b}}+\rho^{-1}(\Phi_{0,b}\Phi^*_{0,b})_0+\eta_b, D^+_A\Phi).
\end{split}
\end{align}
This gives a bundle map 
\begin{align}\label{perturbed SW family}
{\mathfrak{F}}(s^\mathcal{R}): {\mathbb{U}}_{E^+}(s^\mathcal{Q})\to 
{\mathbb{V}}_{E^+}(s^\mathcal{Q}) .
\end{align}

\begin{defn}
We say that $\{\eta_b\}_{b \in B}$ is a {\it family regular perturbation} if \eqref{perturbed SW family} is transverse to zero section of ${\mathbb{V}}_{E^+}(s^\mathcal{Q}) $. 
\end{defn}

For each fiber, 
we have the infinitesimal action of gauge group at every point $(A_b, \Phi_b)\in \mathcal{C}_{W^+,b}$
\[
\delta_{(A_b, \Phi_b)}: L^2_{k+1, A_{0,b} }(i\Lambda^0_{W^+})   \to L^2_{k, A_{0,b} }(i\Lambda^1_{W^+}\oplus S^+)
\]
and the
linearization of the Seiberg--Witten map at $(A_b, \Phi_b)\in \mathcal{C}_{W^+,b}$ 
\[
\mathcal{D}_{(A_b, \Phi_b)}\mathfrak{F}: L^2_{k, A_{b}, g_b }(i \Lambda^1_{W^+}\oplus S^+_{W^+})  \to L^2_{k-1, A_{b}, g_b }( i\Lambda^+_{W^+}\oplus S^-_{W^+})
\]
The sum 
\[
\mathcal{D}_{(A_b, \Phi_b)}\mathfrak{F}+\delta^*_{(A_b, \Phi_b)}:
L^2_{k, A_{b}, g_b }(i \Lambda^1_{W^+}\oplus S^+_{W^+})  \to L^2_{k-1, A_{b}, g_b }( i\Lambda^+_{W^+}\oplus S^-_{W^+})
\]
is a linear Fredholm operator.
This gives a fiberwise Fredholm operator
\[
\mathbb{L}_{E^+}(s^\mathcal{Q}): T_{\operatorname{fiber}}{\mathbb{U}}_{E^+}(s^\mathcal{Q}) \to {\mathbb{V}}_{E^+}(s^\mathcal{Q}), 
\]
where $T_{\operatorname{fiber}}$ means the fiberwise tangent bundle of ${\mathbb{U}}_{E^+}(s^\mathcal{Q})$. 
By taking the determinant line bundle for each fiber, we obtain a line bundle
\begin{align}\label{determinant line for family}
\operatorname{det} (\mathbb{L}_{E^+}(s^\mathcal{Q}) )  \to \B_{E^+}(s^\mathcal{Q}) . 
\end{align}

\begin{rem}
As a similar study, in \cite{J21}, Juan defined the families monopole contact invariant for families of contact structures on a fixed 3-manifold. 
At the moment, we do not know the triviality of the line bundle \eqref{determinant line for family}. 
\end{rem}

\subsection{Constructions of the invariant}

For a fixed section $
s^\mathcal{R} : B \to  E^\Gamma_\mathcal{R}
$ such that $\{\eta_b\}_{b\in B}$ is regular, 
the parametrized moduli space is defined to be 
\[
\mathcal{M}(E,\tilde{\Gamma} ,  s^\mathcal{R}) := \{ (A, \Phi) = (A_b, \Phi_b)_{b \in B} \in {\mathbb{U}}_{W^+} | {\mathfrak{F}} (A,\Phi) =0 \} / \G_{W^+}.    
\]
Recall the formal dimension 
\[
d(W,\fraks, \xi)=
\langle e(S^+ , \Phi_0 |_{\partial W} ) , [X, \partial X] \rangle
\]
of the (unparametrized) moduli space over the cone-like end 4-manifold $W^+$.

\begin{prop}\label{orientability of determinant line}
For a regular perturbation, $\mathcal{M}(E, \tilde{\Gamma}, s^\mathcal{R})$ is a smooth compact manifold of dimension $d(W,\fraks, \xi)+n$. If the determinant line bundle 
\[
\operatorname{det} (\mathbb{L}_{E^+}(s^\mathcal{Q}) )  \to \B_{E^+}(s^\mathcal{Q}) ,
\] is trivialized, an orientation of $\mathcal{M}(E,\tilde{\Gamma},   s^\mathcal{R})$ is naturally induced by an orientation of $B$ and an orientation of $\operatorname{det} (\mathbb{L}_{E^+}(s^\mathcal{Q}) )|_b$ on a fiber of $b\in B$.  
\end{prop}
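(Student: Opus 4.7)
The plan is to adapt the standard three-step argument (smoothness, compactness, orientation) for parametrized Seiberg--Witten moduli spaces, using the cone-like compactness results of Kronheimer--Mrowka on each fiber as a black box.

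First I would establish smoothness and the dimension count. The parametrized Seiberg--Witten map \eqref{perturbed SW family} descends to a smooth section of a Hilbert bundle $\mathbb{V}_{E^+}(s^\mathcal{Q})/\mathcal{G}_{W^+} \to \mathbb{U}_{E^+}(s^\mathcal{Q})/\mathcal{G}_{W^+}$ (irreducibility of $\Phi_0$ on the end guarantees the quotient is a Hilbert manifold). The hypothesis that $\{\eta_b\}$ is a family regular perturbation means precisely that this section is transverse to the zero section, so by the implicit function theorem the zero set is a smooth Banach submanifold which then coincides with $\mathcal{M}(E,\tilde{\Gamma},s^\mathcal{R})$. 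On each fiber $E^+_b$ the linearization is the Fredholm operator $\mathcal{D}_{(A_b,\Phi_b)}\mathfrak{F}+\delta^*_{(A_b,\Phi_b)}$, whose index equals $d(W,\fraks,\xi)$ by the Kronheimer--Mrowka index computation recalled in \cref{Signed Kronheimer--Mrowka invariants}. Adding the $n$ directions from the base, the parametrized linearization has Fredholm index $d(W,\fraks,\xi)+n$, which gives the claimed dimension.

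Next I would handle compactness. Since the base $B$ is compact, it suffices to prove a sequential compactness statement: any sequence $(b_i,[A_i,\Phi_i]) \in \mathcal{M}(E,\tilde{\Gamma},s^\mathcal{R})$ has a convergent subsequence. After passing to a subsequence we may assume $b_i \to b_\infty$ in $B$, and then the conical Riemannian data, the reference configuration $(A_{0,b_i},\Phi_{0,b_i})$, and the perturbation $\eta_{b_i}$ all converge smoothly in the appropriate weighted topology to the fiber over $b_\infty$. The Kronheimer--Mrowka compactness theorem on each fixed fiber extends to this slowly varying family because all their a priori estimates (the key pointwise bound $|\Phi|\le s$ coming from the Kähler end and the resulting curvature/Sobolev bounds) depend continuously on the geometric data; a standard patching argument then extracts a subsequence converging modulo gauge to a solution over $E^+_{b_\infty}$, yielding sequential compactness.

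For the orientation, I would work with the family linearization $\mathbb{L}_{E^+}(s^\mathcal{Q})$ restricted to the zero locus. At a solution $(b,[A,\Phi])$, the tangent space to $\mathcal{M}(E,\tilde{\Gamma},s^\mathcal{R})$ fits in a short exact sequence
\[
0 \to \ker \mathbb{L}_{E^+}(s^\mathcal{Q})|_{(b,[A,\Phi])} \to T_{(b,[A,\Phi])}\mathcal{M} \to T_b B \to 0,
\]
with the cokernel vanishing by transversality. Under the convention that the determinant line of a Fredholm operator is $\det(\ker)\otimes \det(\mathrm{coker})^*$, a trivialization of $\det(\mathbb{L}_{E^+}(s^\mathcal{Q}))$ over a fiber orients $\ker \mathbb{L}$, and connectedness of $\mathcal{B}_{E^+}(s^\mathcal{Q})$ propagates the fiber orientation to all of $B$. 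Combined with the given orientation of $B$, the displayed short exact sequence then orients $T\mathcal{M}$, completing the proof.

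The main technical obstacle I would expect is the family compactness step: one has to confirm that Kronheimer--Mrowka's a priori bounds and limiting argument, which depend on the Kähler structure of the end, are locally uniform in $b \in B$ in the weighted $C^r$-topology used to set up $\mathcal{R}(Y,W,\fraks,\xi)$. Once this uniformity is spelled out, smoothness, dimension, and orientation are formal consequences of the Fredholm theory already assembled in the section.
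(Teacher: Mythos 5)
The paper itself omits the proof, saying only ``The proof is the standard perturbation argument with the compact parameter space $B$. We omit it.'' Your proposal fills in exactly the standard argument the authors have in mind (smoothness and dimension from families transversality and Fredholm index, compactness from compactness of $B$ plus uniformity of the Kronheimer--Mrowka estimates, orientation from the determinant line bundle of the fiberwise linearization), so the approach matches.

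There is, however, a slip in your orientation step. You write the three-term short exact sequence
\[
0 \to \ker \mathbb{L}_{E^+}(s^\mathcal{Q})|_{(b,[A,\Phi])} \to T_{(b,[A,\Phi])}\mathcal{M} \to T_b B \to 0
\]
and justify exactness on the right by saying the cokernel ``vanishes by transversality.'' That is not what family transversality gives you: a family regular perturbation makes the \emph{parametrized} linearization (which includes differentiation in the $B$-directions) surjective, but the \emph{fiberwise} operator $\mathbb{L}_b$ can still have nonzero cokernel at a solution, precisely because the surjectivity may be achieved by moving in $B$. The correct statement is the four-term exact sequence
\[
0 \to \ker \mathbb{L}_b \to T_{(b,[A,\Phi])}\mathcal{M} \to T_b B \to \mathrm{coker}\, \mathbb{L}_b \to 0,
\]
which gives a canonical isomorphism $\det\bigl(T_{(b,[A,\Phi])}\mathcal{M}\bigr) \cong \det(T_b B) \otimes \det(\mathbb{L}_b)$. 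This repaired version still yields the desired orientation from a trivialization of $\det\bigl(\mathbb{L}_{E^+}(s^\mathcal{Q})\bigr)$ together with an orientation of $B$, but your claim that trivializing $\det(\mathbb{L})$ orients $\ker\mathbb{L}$ alone is only valid when $\mathrm{coker}\,\mathbb{L}_b = 0$. Fix the exact sequence and phrase the orientation induction in terms of determinant lines of the operator rather than its kernel, and the proof is sound.
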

\begin{proof}
The proof is the standard perturbation argument with the compact parameter space $B$. We omit it.
\end{proof}
 
\begin{defn}
 We define the {\it families Kronheimer--Mrowka invariant} of $E$ by 
 \[
 FKM(E, \tilde{\Gamma}, s^\mathcal{R}) 
 := \begin{cases} \# \mathcal{M}(E, \tilde{\Gamma},  s^\mathcal{R}) \in \Z_2 &\text{ if } d(W,\fraks, \xi) + n=0,  \\ 
  0 \in \Z_2 &\text{ if } d(W,\fraks, \xi) + n\neq 0 
 \end{cases} 
 \]
 for a fixed section $s^\mathcal{R}$. 
\end{defn}
Since we will see the number $FKM(E,\tilde{\Gamma} ,  s^\mathcal{R})$ does not depend on the choices of sections $s^\mathcal{R}$ and $\tilde{\Gamma}$ up to smooth homotopy, we always drop $s^\mathcal{R}$ in the notion and write $FKM(E, \Gamma)$.

 \begin{prop}\label{independence of most general inv}
 The number $FKM(E, \tilde{\Gamma},   s^\mathcal{R})$ is independent of the choices of the following data: 
 \begin{itemize}
     \item a section $s^\mathcal{R}$ and
     \item a choice of $\tilde{\Gamma}$ which belongs to the homotopy class $\Gamma$. 
 \end{itemize} 
 Also $FKM(E, \Gamma )$ depends only on the isomorphism class of $E$ as $\operatorname{Aut}((W, \s), \partial)$-bundles and $\Gamma$.
 \end{prop}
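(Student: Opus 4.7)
The plan is to prove the three independence statements by a standard parametrized cobordism argument adapted to the cone-like end geometry of Kronheimer--Mrowka. In all cases the key structural input is the contractibility of the fibers of $E_{\mathcal{R}}^{\tilde{\Gamma}} \to B$ (established just above in the excerpt), which guarantees both existence of sections and the ability to interpolate between any two of them.

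First I would address independence of $s^\mathcal{R}$ for fixed $\tilde{\Gamma}$. Given two regular sections $s_0^\mathcal{R}, s_1^\mathcal{R}$, since the space of sections is non-empty and contractible, I can join them by a smooth path $s_t^\mathcal{R}$, $t \in [0,1]$. Using this path of data I would form a parametrized moduli space over $B \times [0,1]$. After a generic perturbation of the interior of the path that keeps the endpoints fixed (by a Sard--Smale argument applied to the family of perturbations $\eta_t$), this moduli space becomes a smooth manifold with boundary of dimension $d(W,\fraks,\xi) + n + 1$ and with boundary $\calM(E,\tilde{\Gamma}, s_0^\mathcal{R}) \sqcup \calM(E,\tilde{\Gamma}, s_1^\mathcal{R})$. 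In the relevant case $d(W,\fraks,\xi)+n = 0$, counting boundary components of the resulting compact $1$-manifold modulo $2$ yields $FKM(E,\tilde{\Gamma},s_0^\mathcal{R}) = FKM(E,\tilde{\Gamma},s_1^\mathcal{R})$.

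For independence of the smooth homotopy class of $\tilde{\Gamma}$, essentially the same cobordism works. A smooth homotopy $\tilde{H} : B \times [0,1] \to \Xi^{\mathrm{cont}}(Y)$ determines a bundle $E_\mathcal{R}^{\tilde{H}} \to B \times [0,1]$ still with contractible fibers, and by a relative version of the previous remarks I can pick a section whose restrictions to $B \times \{i\}$ are regular sections $s_i^\mathcal{R}$ for $\tilde{\Gamma}_i$. The parametrized moduli space over $B \times [0,1]$ is then a cobordism between the moduli spaces at the two endpoints. Isomorphism invariance is straightforward: a bundle isomorphism $\Phi$ of $\operatorname{Aut}((W,\fraks),\partial)$-bundles pulls back all the auxiliary data fiberwise, carries regular data to regular data, and identifies the perturbed Seiberg--Witten equations, producing a bijection of parametrized moduli spaces.

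The main obstacle I anticipate is establishing compactness of the parametrized moduli space over $B \times [0,1]$, since the Kronheimer--Mrowka compactness result in the fixed case depends on precise geometric features of the cone-like end (the almost K\"ahler structure, the canonical configuration $(A_0, \Phi_0)$, and the weighted exponential decay of $\eta$). To propagate this to the $1$-parameter family, I would first observe that since $[0,1]$ is compact, the decay rate $\epsilon_0$ and the bounds on $\|\eta_t\|_{C^r}$, as well as the pointwise bounds on $(g_t, A_{0,t}, \Phi_{0,t})$ and their derivatives, may be chosen uniformly in $t$. The a priori $L^2$ energy and $C^0$ estimates from \emph{loc.~cit.}~then yield uniform bounds, and a standard Uhlenbeck-type patching argument applied fiberwise (using continuity of the data in $t$) delivers the compactness. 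A parallel uniformity argument, applied to the family of contact data determined by $\tilde{H}$, handles Step 2 simultaneously.
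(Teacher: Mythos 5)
Your proposal is correct and follows the same parametrized cobordism argument as the paper: the paper also interpolates the section data over $I\times B$ using contractibility of the fiber of $\bigcup_{t}E_{\mathcal{R}}^{\tilde{\Gamma}_t}\to I\times B$ to produce a cobordism between the two moduli spaces, handling both the section and the homotopy class of $\tilde{\Gamma}$ in a single step. Your additional remarks on uniformity of the decay rate and bounds over $[0,1]$ for the compactness of the parametrized moduli space are a sound elaboration of a point the paper leaves implicit.
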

 
\begin{proof}
We take a smooth homotopy $\tilde{\Gamma}_t : I \times B \to \Xi^{\mathrm{cont}}(Y) $ between $\tilde{\Gamma}_0 $ and $\tilde{\Gamma}_1$ parametrized $t \in [0,1]$. Take two sections 
\[
s_0^\mathcal{R} : B \to E^\mathcal{R}_{\tilde{\Gamma}_0} 
\text{ and } 
s_1^\mathcal{R} : B \to E^\mathcal{R}_{\tilde{\Gamma}_1} 
\]
so that \eqref{perturbed SW family} is transverse for $i=0$ and $i=1$.
Note that a fiber of the bundle  \[
\bigcup_{t\in I} E^{\tilde{\Gamma}_t}_\mathcal{R}\to I \times B
\]
is contractible, we can take a section $s_t^\mathcal{R} : I \times B \to \bigcup_{t\in I} E^{\tilde{\Gamma}_t}_\mathcal{R}$ connecting $s_0^\mathcal{R}$ and $s_1^\mathcal{R}$ such that  \eqref{perturbed SW family} for $s_t^\mathcal{R}$ is transverse. So, the moduli space for $s_t^\mathcal{R}$ gives a cobordism between $\mathcal{M}(E, \tilde{\Gamma}_0, s_0^\mathcal{R} )$ and $\mathcal{M}(E, \tilde{\Gamma}_1, s_1^\mathcal{R} )$. This completes the proof. 
\end{proof}

\begin{rem}
Note that there is no reducible solution to the monopole equations over the conical end $4$-manifold $W^+$ under our boundary condition, and we do not have to impose any condition on $b^+_2(W)$ to ensure the well-definedness of the invariant, as well as the unparametrized Kronheimer--Mrowka invariant.
\end{rem}

 \subsection{Invariant of diffeomorphisms}
Now suppose that the base space $B$ is $S^1$ and $\tilde{\Gamma}$ is a constant map to $\xi$.
In this case, the family $E \to S^1$ is determined by an element of $\Aut_{\partial}(W,\fraks)$.
An element of $\Aut_{\partial}(W,\fraks)$ is given as a pair $(f,\tilde{f})$: $f$ is a diffeomorphism $f : W \to W$ which preserves the isomorphism class of $\fraks$ and fix $\del W$ pointwise, and $\tilde{f}$ is a lift of $f$ to an automorphism on the honest Spin$^c$ structure $\fraks$ acting trivially over $\del W$.
All $E \to S^1$ can be viewed as the mapping torus of $W$ by $(f, \tilde{f})$. 

\begin{lem}
\label{lem: indep of lift}
Let $E$ be the mapping torus of $W$ by $(f, \tilde{f})$.
Then the invariant $FKM(E, \Gamma)$ depends only on the diffeomorphism $f$ and $\xi$.
\end{lem}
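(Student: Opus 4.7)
The plan is to show that two different Spin$^c$ lifts of $f$ give rise to canonically identified parametrized moduli spaces, because the two lifts differ by an element which acts on configurations as a gauge transformation and therefore descends to the identity on the configuration space modulo gauge.

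Given two lifts $\tilde f_1, \tilde f_2 \in \Aut_{\del}(W,\fraks)$ of $f$, I would first write $\tilde f_2 = u \cdot \tilde f_1$, where $u$ lies in the kernel of $\Aut_{\del}(W,\fraks) \to \Diff(W,[\fraks],\del)$. This $u$ is a Spin$^c$ automorphism of $\fraks$ covering $\id_W$, equivalently a map $W \to U(1)$. Using the hypothesis that $E|_{\partial W}$ is a trivial bundle compatible with the fixed contact boundary data, one can arrange $u|_{\partial W} = 1$, so $u$ extends by the identity on the conical end $\R^{\geq 1}\times Y$ to a bona fide element $u^+ \in \G_{W^+}$.

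Next, I would establish the key identity $\tilde f_2^{\,*} = \tilde f_1^{\,*} \circ (u^+)^{*}$ on the configuration space $\mathcal{C}_{W^+}$. Since $(u^+)^{*}$ is by definition the gauge action of $u^+$, it descends to the identity on $\mathcal{B}_{W^+} = \mathcal{C}_{W^+}/\G_{W^+}$. Consequently the two self-maps of $\mathcal{B}_{W^+}$ induced by $\tilde f_1^{\,*}$ and $\tilde f_2^{\,*}$ coincide, and through the fiberwise clutching construction \eqref{conf family} this yields a canonical isomorphism of bundles $\B_{E_1^+}(s^{\mathcal Q}) \cong \B_{E_2^+}(s^{\mathcal Q})$ over $S^1$. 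Because the perturbed Seiberg--Witten map $\mathfrak F$, the Fredholm operator $\mathbb{L}_{E^+}$, and the determinant line bundle \eqref{determinant line for family} are all gauge-equivariant, they descend to corresponding data on $\mathcal B_{E^+}$ which match under this isomorphism.

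Finally, the parametrized moduli spaces are the zero loci of these sections, so the above isomorphism identifies $\mathcal{M}(E_1,\Gamma,s^{\mathcal R}) \cong \mathcal{M}(E_2,\Gamma,s^{\mathcal R})$, and their mod-$2$ counts agree. The main technical point that I expect to require care is verifying in the Sobolev category that $u^+$ is a genuine element of $\G_{W^+}$ (i.e.\ $1 - u^+ \in L^2_{k+1}$) and that the gauge action commutes with all the pullbacks entering into the definition of $\mathfrak F$, including the perturbation $\eta_b$; once this is handled, the remainder is essentially formal from the construction of $FKM$, parallel to the independence arguments already used in \cref{independence of most general inv}.
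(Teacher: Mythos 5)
Your proposal is correct and follows essentially the same route as the paper: both identify the difference of two lifts with a gauge transformation $u\colon W\to U(1)$ trivial on the boundary, extend it to an element $u^+\in\G_{W^+}$ via a cutoff near $\del W$, and conclude that the induced maps on the quotient configuration space (and hence the parametrized moduli spaces) coincide. The only small inaccuracy is your framing that $u|_{\del W}=1$ is something ``one can arrange''; in the paper this is automatic, since by definition elements of $\Aut_{\del}(W,\fraks)$ act trivially over $\del W$, so the kernel of $\Aut_{\del}(W,\fraks)\to\Diff(W,[\fraks],\del)$ is precisely $\G_W=\{u\colon W\to U(1)\mid u|_{\del W}\equiv 1\}$.
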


\begin{proof}
The kernel of the natural surjection
\[
\Aut_{\partial}(W,\fraks) \to \Diff(W, [\fraks],\del)
\]
is given by the gauge group $\G_{W}=\Set{u : W \to U(1) | u|_{\del W} \equiv 1}$.
Now suppose that we have two lifts $\tilde{f}_1$ and $\tilde{f}_2$ of $f$ to $\Aut_{\partial}(W,\fraks)$.
Let $E_i$ be the mapping torus of $(f, \tilde{f}_i)$.
Then the composition $\tilde{f}_1 \circ \tilde{f}_2^{-1}$ is given by a smooth map $u : W \to U(1)$ with $u \equiv 1$ on $\del W$.
Taking an extension of $u$ to a neighborhood of $W$ in $W^+$, and also a partition of unity around $\del W$, we can extend $u$ to a smooth map $u^+ : W^+ \to U(1)$ with $1-u \in L^2_{k+1}$.
Hence the moduli spaces used in the definition of $FKM(E_1)$ and that of $FKM(E_2)$ are identical to each other.
\end{proof}

\begin{defn}
For a fixed contact structure $\xi$ on $Y$, we define the {\it Kronheimer--Mrowka invariant for diffeomorphims} $FKM(W, \fraks, \xi, f)$ to be the invariant $FKM(E, \Gamma \equiv \xi)$ of the mapping torus $E$ with fiber $(W,\fraks)$ defined taking a lift of $f$ to $\Aut_{\partial}(W,\fraks)$.
Note that, by \cref{lem: indep of lift}, $FKM(W, \fraks, \xi, f)$ is independent of the choice of lift.
If $(\fraks, \xi)$ is specified, we sometimes abbreviate $FKM(W, \fraks, \xi, f)$ to $FKM(W, f)$.
\end{defn}

Now we have defined a map
\[
FKM(W, \fraks, \xi, \bullet) : \Diff(W, [\fraks],\del) \to \Z_2.
\]
We will show that this map is a homomorphism and descents to a map
\[
FKM(W, \fraks, \xi, \bullet) : \pi_0(\Diff(W, [\fraks],\del)) \to \Z_2.
\]

\subsection{A signed refinement of $FKM$ for diffeomorphisms}
\label{subsection A signed refinement of m for diffeomorphisms}
Again, in this subsection, we assume that $\Gamma$ is a constant function to $\xi$. Define a subgroup $\Diff_H(W, [\fraks], \del)$ of the relative diffeomorphism group $\Diff(W, [\fraks], \del)$ as the group of diffeomorphisms that act trivially on homology and preserve the isomorphism class $[\fraks]$ and $\del W$ pointwise.
Note that, if $W$ is simply-connected, $\Diff_H(W, [\fraks], \del)$ coincides with the group $\Diff_H(W, \del)$, the group of diffeomorphisms that act trivially on homology and preserve $\del W$ pointwise.

For each element of $\Lambda(W,\fraks,\xi)$, 
we shall define a map
\[
FKM(W, \fraks, \xi, \bullet) : \Diff_H(W, [\fraks],\del) \to \Z.
\]
The construction of this map is done essentially by a similar fashion to define $FKM : \Diff(W, [\fraks],\del) \to \Z_2$, but we need to count the parametrized moduli space taking into account its orientation.

For $f \in \Diff_H(W, [\fraks],\del)$, let $\tilde{f}$ be a lift of $f$ to an automorphism of the Spin$^c$ structure.
Let $E_{f, \tilde{f}}$ denote the mapping torus of $(W,\frakt)$ as a fiber bundle of Spin$^c$ 4-manifolds. Take a section 
\[
s^\mathcal{Q}:  S^1 \to E^\Gamma_\mathcal{Q}. 
\]
Let $\B_{E_{f, \tilde{f}}} (s^\mathcal{Q}) $ denote the families (quotient) configuration space associated to $E_{f, \tilde{f}}$ introduced in \eqref{conf family}. 

\begin{lem} 
\label{lem: diffeo moduli ori}
Suppose $f$ is homologically trivial. 
Each element in $\Lambda(W, \s, \xi)$ induces  a section of the orientation bundle
\begin{align}\label{ori-bundle}
 \Lambda (E_{f, \tilde{f}})  \to \B (E_{f, \tilde{f}})   
\end{align}
over the configuration space $\B (E_{f, \tilde{f}})$ for all  $f \in \Diff_H(W, [\fraks],\del)$ and lifts $\tilde{f}$ to the Spin$^c$ structure. 
\end{lem}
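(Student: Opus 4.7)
The strategy is to reduce the question to the closed-manifold setting via the capping construction of \cref{Signed Kronheimer--Mrowka invariants}. Fix an almost complex filling $(Z,J)$ of $(-Y,\xi)$ and form the closed Spin$^c$ $4$-manifold $\widehat{W} := W \cup_Y Z$ with Spin$^c$ structure $\widehat{\fraks} := \fraks \cup \fraks_J$. Via the canonical bijection $\psi : \Lambda(W,\fraks,\xi,Z,J) \to \Lambda(W,\fraks,\xi)$ of \eqref{det line identification}, a chosen element of $\Lambda(W,\fraks,\xi)$ corresponds to a homology orientation $\mu$ of $\widehat{W}$, i.e.\ an orientation of $\det\bigl(H^1(\widehat{W};\R)\oplus H^+(\widehat{W};\R)\bigr)$.

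Since $f$ fixes $\partial W$ pointwise and $\tilde{f}$ acts as the identity over $\partial W$, we may extend $(f,\tilde{f})$ by the identity on $(Z,\fraks_J)$ to an automorphism $(\widehat{f},\widehat{\tilde{f}})$ of $(\widehat{W},\widehat{\fraks})$. Let $\widehat{E}$ denote its mapping torus over $S^1$. Applying the fiberwise version of the excision argument used to define $\psi$ in \cref{Signed Kronheimer--Mrowka invariants} to the decomposition $W^+ = W \cup_Y C^+$ on each fiber (paired with a constant family built from an auxiliary almost complex bound $Z_1$ of $(Y,\xi)$) produces a canonical isomorphism of $\Z/2$-bundles between $\Lambda(E_{f,\tilde{f}})$ and the orientation bundle of the family Seiberg--Witten operator on $\widehat{E}$. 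The ancillary factors coming from constant families of closed almost complex $4$-manifolds carry canonical sections induced by $J$, so it suffices to produce a section of the orientation bundle over $\B(\widehat{E})$ induced by $\mu$.

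Because $\widehat{E} \to S^1$ is the mapping torus of $\widehat{f}$ and the fiber $\B(\widehat{W})$ is connected, the $\Z/2$-bundle in question is trivializable if and only if its monodromy around $S^1$ is trivial, and this monodromy equals the action of $\widehat{f}^{*}$ on the real line $\det\bigl(H^1(\widehat{W};\R)\oplus H^+(\widehat{W};\R)\bigr)$ oriented by $\mu$. Since $f \in \Diff_H(W,[\fraks],\partial)$ acts trivially on $H^\ast(W;\R)$ and we extended by the identity on $Z$, a Mayer--Vietoris argument for $\widehat{W} = W \cup_Y Z$ (with the common boundary $Y$ fixed pointwise by both halves) shows that $\widehat{f}$ acts trivially on $H^\ast(\widehat{W};\R)$. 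In particular $\mu$ is preserved by $\widehat{f}^{*}$, so the monodromy is trivial and the desired global section of $\Lambda(E_{f,\tilde{f}}) \to \B(E_{f,\tilde{f}})$ exists and depends only on the initial choice of element in $\Lambda(W,\fraks,\xi)$.

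The main technical hurdle is carrying out the excision argument of \cref{Signed Kronheimer--Mrowka invariants} carefully in the family setting: one must verify that the fiberwise excision isomorphisms assemble into a continuous (indeed smooth) isomorphism of $\Z/2$-bundles over the families configuration space, that the canonical trivializations on the auxiliary constant-family factors match the unparametrized ones, and that the resulting section is independent of the auxiliary choices $(Z,J)$ and $Z_1$, parallelling the unparametrized independence already established. Once this is in place, the monodromy calculation above finishes the proof.
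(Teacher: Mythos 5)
Your proof is correct and follows essentially the same route as the paper: both translate the question, via the identification $\psi$ with $\Lambda(W,\fraks,\xi,Z,J)$, into one about homology orientations of the capped closed manifold $W\cup Z$, and both conclude from the homological triviality of $f$ (together with $f|_{\partial W}=\id$) that the monodromy of the orientation bundle over the mapping torus is trivial. The only real difference is that you spell out the Mayer--Vietoris step showing the extension $\widehat{f}=f\cup\id_Z$ acts unipotently (hence homology-orientation-preservingly) on $H^*(W\cup Z;\R)$, which the paper leaves implicit in \cref{family orientation excision}.
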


\begin{proof}
We first prove the line bundle \eqref{ori-bundle} is trivial for any pair $(f, \tilde{f})$ such that $f$ is homologically trivial.

We first regard $\B_{E_{f, \tilde{f}}} (s^\mathcal{Q}) $ as a mapping torus of the trivial bundle 
\[
\underline{\B}_{W, \s, \xi} := I \times \B_{W, \s, \xi} \to I
\]
via the map $(f, \tilde{f})$. From \cref{triviality of orientation bundle}, we see that the determinant line bundle $\Lambda(W, \s, \xi)$  over $\underline{\B}_{W, \s, \xi}$ is trivial. So, it is sufficient to prove that the induced map 
\[
(f, \tilde{f})_*:  \Lambda(W, \s, \xi) \to \Lambda(W, \s, \xi) 
\]
preserves a given orientation of $\Lambda(W, \s, \xi)$. 
In order to see this, we use the following canonical identification 
\eqref{det line identification}. 
First, we fix an almost complex 4-manifold $(Z_1, J_1)$ bounded by  $(-Y, \xi)$. We recall that 
\[
\Lambda (W, \s, \xi, Z, J )
\]
is defined by the two-element set of trivializations of the orientation line bundle for the linearized equation with a slice on the closed Spin$^c$ 4-manifold $(W \cup Z, \s \cup \s_J)$. Then, \eqref{det line identification} gives an identification \[
\psi : \Lambda (W, \s, \xi, Z, J ) \to \Lambda(W, \s, \xi) . 
\]
Note that $(f, \tilde{f})$ also naturally acts on $\Lambda (W, \s, \xi, Z, J )$. 

\begin{claim1}\label{family orientation excision}
The following diagram commutes: 
 \[
    \begin{CD}
    \Lambda (W, \s, \xi, Z, J ) @>{\psi}>> \Lambda(W, \s, \xi) \\
  @V{(f, \tilde{f})_*}VV    @V{(f, \tilde{f})_*}VV     \\
    \Lambda (W, \s, \xi, Z, J ) @>{\psi}>> \Lambda(W, \s, \xi) . 
  \end{CD}
 \]
\end{claim1}
\begin{proof}[Proof of \cref{family orientation excision}]
This result follows the construction of $\psi$ based on \cref{excision}. 
\end{proof}
Note that the orientation of $\Lambda (W, \s, \xi, Z, J )$ is determined just by the homology orientation of $W\cup Z$. 
Since we assumed that $\psi$ is homologically trivial, the induced action 
\[
(f, \tilde{f})_* : \Lambda (W, \s, \xi, Z, J ) \to \Lambda (W, \s, \xi, Z, J )
\]
is also trivial. Hence, we can see that the bundle $\Lambda (E_{f, \tilde{f}})  \to \B (E_{f, \tilde{f}}) $ is trivial. 
Now, we give an orientation of $\Lambda (E_{f, \tilde{f}}) $ from a fixed element in $\Lambda(W, \s, \xi)$. For a fixed element in $\Lambda(W, \s, \xi)$, an element $\Lambda (E_{f, \tilde{f}}) $ is induced by choosing a point in $\B_{ E_{f, \tilde{f}}}$ and restricting the bundle $\Lambda (E_{f, \tilde{f}}) $ to the point. Note that such a correspondence does not depend on the choices of lifts $\tilde{f}$. This completes the proof. 
\end{proof}

If $E=E_{(f, \tilde{f})}$ is the mapping torus, we can count the parametrized moduli space associated with $E$ over $\Z$ by \cref{lem: diffeo moduli ori}.

\begin{defn}
For $f \in \Diff_H(W, [\fraks],\del)$ and a lift $\tilde{f}$,
let $E=E_{(f, \tilde{f})}$ be the mapping torus of $(W,\fraks)$ by  $(f, \tilde{f})$.
we define the {\it signed families Kronheimer--Mrowka invariant} of $E$ by 
 \[
 FKM(E, \xi) 
 := \begin{cases} \# \mathcal{M}(E, \Gamma \equiv \xi, s^\mathcal{R}  ) \in \Z &\text{ if } d(W,\fraks, \xi) + 1=0,  \\ 
  0 \in \Z &\text{ if } d(W,\fraks, \xi) + 1\neq 0
 \end{cases} 
 \]
 for a fixed element in $\Lambda(W, \s, \xi)$.
 \end{defn}

Repeating the argument in \cref{lem: indep of lift}, we obtain:

\begin{lem}
\label{lem: indep of lift Z lift}
Let $E$ be the mapping torus of $(W, \fraks)$ by $(f, \tilde{f})$.
Then the invariant $FKM(E) \in \Z$ depends only on $f \in \Diff_H(W, [\fraks],\del)$.
\end{lem}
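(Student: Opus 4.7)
The plan is to mirror the proof of \cref{lem: indep of lift}, adding the verification that the identification between the moduli spaces is orientation-preserving (which is the new content needed to upgrade from $\Z_2$ to $\Z$).

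Suppose $\tilde{f}_1$ and $\tilde{f}_2$ are two lifts of $f \in \Diff_H(W,[\fraks],\del)$ to $\Aut_\del(W,\fraks)$. As in \cref{lem: indep of lift}, the composition $\tilde{f}_1 \circ \tilde{f}_2^{-1}$ is given by a smooth $u : W \to U(1)$ with $u \equiv 1$ on $\del W$, and I would extend $u$ to a smooth $u^+ : W^+ \to U(1)$ with $1 - u^+ \in L^2_{k+1}$ (using a partition of unity to collapse $u$ to $1$ outside a neighborhood of $W$ in $W^+$). This $u^+$ lies in the gauge group $\G_{W^+}$, so fiberwise application of $u^+$ identifies the families configuration spaces, the perturbed Seiberg--Witten maps, and hence the parametrized moduli spaces $\M(E_{f,\tilde f_1})$ and $\M(E_{f,\tilde f_2})$ as sets.

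The remaining task is to verify that this identification preserves the orientation. By \cref{orientability of determinant line} combined with \cref{lem: diffeo moduli ori}, the orientation on each $\M(E_{f,\tilde f_i})$ is determined by the fixed orientation of $B = S^1$ together with the section of $\Lambda(E_{f,\tilde f_i}) \to \B(E_{f,\tilde f_i})$ induced by the chosen element of $\Lambda(W,\fraks,\xi)$. The proof of \cref{lem: diffeo moduli ori} explicitly notes that the induced section ``does not depend on the choices of lifts $\tilde f$''; concretely, both sections are obtained by restricting to a single point of the trivial bundle $I \times \B_{W,\fraks,\xi}$ and then using the isomorphism $\psi$ of \eqref{det line identification}, which is a purely homological construction. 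Since $u^+$ acts by the identity on homology and on the reference orientation of $\Lambda(W,\fraks,\xi)$, the fiberwise gauge identification carries the orientation on $\M(E_{f,\tilde f_1})$ to the one on $\M(E_{f,\tilde f_2})$.

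Combining these two points yields $\#\M(E_{f,\tilde f_1}) = \#\M(E_{f,\tilde f_2})$ in $\Z$, giving the claimed independence. The main (and really only) subtlety I expect is step three: one must be careful that the orientation convention for the parametrized moduli space of the mapping torus really is pulled back from the reference $\Lambda(W,\fraks,\xi)$ rather than from some data that depends on $\tilde f$; this is precisely the content of \cref{lem: diffeo moduli ori} and of \cref{family orientation excision}, so the present lemma essentially reduces to invoking these. No further transversality, compactness, or analytic input beyond what has already been set up is required.
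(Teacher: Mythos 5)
Your proof is correct and takes essentially the same approach as the paper, which simply states the proof is ``essentially the same as that of \cref{lem: indep of lift}''; you additionally spell out the orientation-preservation step, which is exactly the observation the paper already records at the end of the proof of \cref{lem: diffeo moduli ori} (that the induced section of the orientation bundle does not depend on the lift $\tilde f$).
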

\begin{proof}
The proof is essentially the same as that of \cref{lem: indep of lift}.
\end{proof}

\begin{defn}
We define the {\it signed Kronheimer--Mrowka invariant for diffeomorphims} $FKM(W, \fraks, \xi, f)$ to be the invariant $FKM(E, \xi)$ of the mapping torus $E$ with fiber $(W,\fraks)$ defined by taking a lift of $f$ to $\Aut_{\partial}(W,\fraks)$.
Note that, by \cref{lem: indep of lift Z lift}, $FKM(W, \fraks, \xi, f)$ is independent of the choice of lift.
If $(\fraks, \xi)$ is specified, we sometimes abbreviate $FKM(W, \fraks, \xi, f)$ to $FKM(W, f)$.
\end{defn}

\subsection{Properties of the families Kronheimer--Mrowka invariant}
\label{Properties of Kronheimer--Mrowka's invariant for compositions of diffeomorphisms}

In this \lcnamecref{Properties of Kronheimer--Mrowka's invariant for compositions of diffeomorphisms}, we prove some basic properties of the families Kronheimer--Mrowka invariant.
This is parallel to Ruberman's original argument \cite[Subsection~2.3]{Rub98}.

Let $(W, \s)$ be a connected compact oriented $\Spinc$ 4-manifold with connected contact boundary $(Y, \xi)$.
In this \lcnamecref{Properties of Kronheimer--Mrowka's invariant for compositions of diffeomorphisms}, we fix $(\fraks,\xi)$ and we sometimes drop this from our notation of $FKM(W, \fraks, \xi, f)$.
 
 First we note the following additivity formula: 
 \begin{prop}
 \label{prop: additivity}
 For diffeomorphisms $f, f'$ of $W$ preserving the isomorphism class of $\fraks$ and fixing $\del W$ pointwise,
 we have
 \[
 FKM(W, \fraks, \xi,f) + FKM(W, \fraks, \xi, f') =  FKM(W, \fraks, \xi, f' \circ  f)
 \operatorname{\ mod\ } 2 . 
 \]
Moreover, when an element of $\Lambda(W, \s, \xi)$ is fixed, 
 we have
 \[
 FKM(W, \fraks, \xi,f) + FKM(W, \fraks, \xi, f') =  FKM(W, \fraks, \xi, f' \circ  f)
 \]
 as $\Z$-valued invariants for homologically tirivial diffeomorphisms $f, f'$ .  
 \end{prop}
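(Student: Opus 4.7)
The plan is to realize the three mapping tori $E_f$, $E_{f'}$, $E_{f'\circ f}$ simultaneously as the boundary components of a family over a pair-of-pants surface $P$, and to use the parametrized moduli space over $P$ as a cobordism to derive the additivity relation. This mirrors Ruberman's argument in \cite{Rub98}.

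First I would construct a smooth fiber bundle $\widetilde{E}\to P$ with fiber $W$ and structure group in $\Diff(W,[\fraks],\del)$, equipped with a fiberwise $\Spinc$ structure, such that the three boundary circles $C_1,C_2,C_3\subset \del P$ carry restrictions equal to $E_f$, $E_{f'}$, and $E_{f'\circ f}$ respectively, with appropriate orientations (two circles giving inward boundary and one outward). Since $\pi_1(P)$ is free on two generators and the product of the three boundary loops is trivial under a suitable choice of orientations, such a bundle is obtained by sending the two generators of $\pi_1(P)$ to $f$ and $f'$. Because $f$ and $f'$ restrict to the identity on $\del W$, the fiberwise-boundary bundle over $P$ is the trivial bundle $Y\times P$, so I would take $\widetilde{\Gamma}\equiv\xi$ to be the constant map.

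Next I would extend the auxiliary choices over $P$. Since the fibers of $E_{\mathcal{R}}^{\widetilde{\Gamma}}\to P$ are contractible, the sections $s^\mathcal{R}$ on $C_1,C_2,C_3$ used to define $FKM(W,f)$, $FKM(W,f')$, $FKM(W,f'\circ f)$ extend to a global section over $P$. A standard Sard--Smale argument allows me to arrange this extension so that the fiberwise Seiberg--Witten map \eqref{perturbed SW family} over $P$ is transverse to zero; then the parametrized moduli space $\mathcal{M}(\widetilde{E},\widetilde{\Gamma},s^\mathcal{R})$ is a compact smooth manifold of dimension $d(W,\fraks,\xi)+2$. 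When $d(W,\fraks,\xi)+1=0$, this parametrized moduli space is a compact $1$-manifold whose boundary is $\mathcal{M}(E_f)\sqcup\mathcal{M}(E_{f'})\sqcup\mathcal{M}(E_{f'\circ f})$. Since the mod-$2$ count of the boundary of a compact $1$-manifold is zero, the $\Z_2$-valued identity follows.

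For the $\Z$-valued refinement under homological triviality, the main additional work is to match orientations. A fixed element of $\Lambda(W,\fraks,\xi)$ orients the determinant line bundle of the linearized equation on a single fiber; by \cref{lem: diffeo moduli ori} it induces orientations of the determinant bundles over the families configuration spaces for $E_f$, $E_{f'}$, and $E_{f'\circ f}$ (note that the composition $f'\circ f$ remains homologically trivial). Applying the excision identification \eqref{det line identification} fiberwise over $P$, together with an almost-complex filling $(Z_1,J_1)$ of $(-Y,\xi)$, upgrades these to a single trivialization of the determinant bundle over $\B_{\widetilde{E}}$ that is compatible with the three boundary trivializations. Combining with the orientation of $P$ orients the $1$-dimensional parametrized moduli space in a way that restricts to the chosen orientation on each $\mathcal{M}(E_f)$, $\mathcal{M}(E_{f'})$ and to the opposite orientation on $\mathcal{M}(E_{f'\circ f})$ (because $C_3$ is the outward boundary). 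The signed count of the boundary of the $1$-cobordism then reads $FKM(W,f)+FKM(W,f')-FKM(W,f'\circ f)=0$, as desired. I expect the orientation bookkeeping in this last step to be the main obstacle; the key point to verify is that the excision identification $\psi$ varies continuously over $P$, which is the pair-of-pants analogue of \cref{family orientation excision} and relies on the package already set up in \cref{Signed Kronheimer--Mrowka invariants}.
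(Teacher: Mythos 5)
Your pair-of-pants argument is correct, but it takes a genuinely different route from the paper. The paper proves additivity by a direct interval-decomposition: fix a regular $(g,\eta)$, realize $FKM(W,f'\circ f)$ as the count of the parametrized moduli space over $[0,1]$ for a path of data from $(g,\eta)$ to $((f'\circ f)^*g,(f'\circ f)^*\eta)$, and split $[0,1]$ at $1/2$. The piece over $[0,1/2]$ runs from $(g,\eta)$ to $(f^*g,f^*\eta)$ and computes $FKM(W,f)$; the piece over $[1/2,1]$ runs from $(f^*g,f^*\eta)$ to $((f'\circ f)^*g,(f'\circ f)^*\eta)$, which is the pullback under $f$ of a path from $(g,\eta)$ to $(f'^*g,f'^*\eta)$, hence computes $FKM(W,f')$. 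The union of the two pieces is the whole moduli space, so the counts add. You instead build a bundle over a pair-of-pants $P$ with the three mapping tori as boundary circles and read off the relation from $\partial$ of a $1$-dimensional parametrized moduli space. Both are sound, and they buy somewhat different things: the paper's version keeps everything over a $1$-dimensional base, so the transversality and orientation bookkeeping reduce to the already-established single-circle setup of \cref{lem: diffeo moduli ori} and no new family transversality over a surface is needed; your version treats the three invariants symmetrically and has the flavor of a TQFT-style cobordism argument, which makes the $\Z_2$ statement immediate, but it forces you to (a) carry out Sard--Smale transversality for a $2$-parameter family and (b) establish a pair-of-pants analogue of \cref{family orientation excision}, which is precisely the extra orientation work you flag at the end. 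The claim that the determinant line bundle over $\mathcal{B}_{\widetilde{E}}$ is trivialized compatibly with the three boundary circles is indeed the right thing to check: since $P$ deformation retracts to a wedge of two circles and both monodromies $f,f'$ are homologically trivial, the argument of \cref{lem: diffeo moduli ori} applies circle-by-circle and the global trivialization exists; your proposal would be complete with that step spelled out.
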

 
 \begin{proof}
 We regard $FKM(W, f' \circ  f)$ as the counting of $\mathcal{M}(E_{f' \circ  f})$. 
 Note that the moduli space $\mathcal{M}(E_{f' \circ  f})$ is equipped with the map $\mathcal{M}(E_{f' \circ  f}) \to S^1$. 
 We fix the following data: 
 \begin{itemize}
  \item a Riemann metric $g$ on $W$ which coincides with $ g_1$ on $\partial W=Y$ and 
     \item a regular perturbation $\eta$ on $W^+$ for the metric $g$.
 \end{itemize}
 The invariant $\mathcal{M}(E_{f})$ can be seen as the counting of parametrized moduli space over $[0,\frac{1}{2}]$ with regular 1-parameter family of perturbation  $\eta_t$ and a  1-parameter family of metrics $g_t$ satisfying
 \begin{itemize}
     \item $g_0 = g $, $g_\frac{1}{2} =  f ^* g$ and 
     \item $\eta_0 = \eta$,  $\eta_\frac{1}{2} =  f ^* \eta $. 
 \end{itemize}
 Also, the invariant $\mathcal{M}_{E_{  f'}}$ can be seen as the counting of parametrized moduli space over $[\frac{1}{2},1]$ with the regular 1-parameter family of perturbation  $\eta_t$ and a  1-parameter family of metrics $g_t$ satisfying
 \begin{itemize}
     \item $g_{\frac{1}{2}} =  f ^* g $, $g_1 = (f' \circ  f )^* g$ and 
     \item $\eta_{\frac{1}{2}}= f ^* \eta$,  $\eta_1 = (f' \circ  f )^* \eta $. 
 \end{itemize}
 Then we have a decomposition
 \[
 \bigcup_{t \in [0,\frac{1}{2}] }  \mathcal{M} (W, g_t, \eta_t) \cup  \bigcup_{t \in [\frac{1}{2}, 1] }  \mathcal{M} (W, g_t, \eta_t) = \bigcup_{t \in [0 , 1] }  \mathcal{M} (W, g_t, \eta_t). 
 \]
 The counting of $\bigcup_{t \in [0 , 1] }  \mathcal{M} (W, g_t, \eta_t)$ is equal to $FKM(W, f' \circ  f)$ by the definition. 
 This completes the proof. Once we fix an orientation of $\Lambda(W, \s, \xi)$, the same argument enables us to prove the equality 
 \[
 FKM(W, \fraks, \xi,f) + FKM(W, \fraks, \xi, f') =  FKM(W, \fraks, \xi, f' \circ  f)\in \Z.
 \]
\end{proof}

\cref{prop: additivity} immediately implies:
 
\begin{cor}
\label{lem: identification then trivial}
We have $FKM(W, \fraks, \xi, f) = 0$ for $f=\id$.
\end{cor}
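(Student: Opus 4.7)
The proof is an immediate consequence of the additivity property (\cref{prop: additivity}). The plan is simply to substitute $f = f' = \id$ into the additivity formula and use $\id \circ \id = \id$ to cancel.

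Concretely, applying \cref{prop: additivity} with $f = f' = \id$ yields
\[
FKM(W, \fraks, \xi, \id) + FKM(W, \fraks, \xi, \id) = FKM(W, \fraks, \xi, \id \circ \id) = FKM(W, \fraks, \xi, \id),
\]
and subtracting $FKM(W, \fraks, \xi, \id)$ from both sides gives $FKM(W, \fraks, \xi, \id) = 0$. This argument works verbatim in both the $\Z_2$-valued and the $\Z$-valued (signed, under a choice of element in $\Lambda(W,\fraks,\xi)$) settings, since in both cases $FKM$ takes values in an abelian group where $x + x = x$ forces $x = 0$ (for $\Z_2$ this follows from $2x = 0$ and $2x = x$ giving $x = 0$; for $\Z$ it is direct).

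No genuine obstacle arises: the result is a purely formal consequence of additivity. As a geometric sanity check, one may also observe that the mapping torus $E_\id$ is the trivial bundle $S^1 \times W$, so taking a constant section $s^\mathcal{R}$ of $E_\mathcal{R}^\Gamma$ makes the parametrized moduli space equal to $S^1 \times M(W,\fraks,\xi)$; since $FKM$ is only potentially nontrivial when $d(W,\fraks,\xi) + 1 = 0$, i.e.\ when $d(W,\fraks,\xi) = -1$, the unparametrized moduli space is empty for a generic perturbation on $W^+$, so after a small perturbation the parametrized moduli space is empty as well, giving count $0$. The additivity proof, however, requires no such geometric verification and is the cleaner route to present.
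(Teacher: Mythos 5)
Your proof is correct and matches the paper's own approach: the paper states that \cref{prop: additivity} immediately implies the corollary, which is precisely your substitution $f=f'=\id$ into the additivity formula. Your additional geometric sanity check (the mapping torus of $\id$ is trivial, so the parametrized moduli space is empty for a constant perturbation) coincides with the alternative proof the authors left commented out in their source.
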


\begin{lem}
\label{lem: rel inv under isotopy}
The number $FKM(W, \fraks, \xi, f)$ is invariant under smooth isotopy of diffeomorphisms in $\Diff(W,[\fraks],\del)$.
\end{lem}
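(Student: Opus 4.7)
My plan is to reduce the lemma to the isomorphism-invariance of $FKM$ already recorded in \cref{independence of most general inv}, by exhibiting, from any smooth isotopy in $\Diff(W,[\fraks],\del)$ between $f_0$ and $f_1$, an isomorphism of the associated mapping-torus $\Aut((W,\fraks),\del)$-bundles over $S^1$. Given the isotopy $\{f_t\}_{t \in [0,1]}$, set $\phi_t := f_t^{-1} \circ f_0$, so $\phi_0 = \id$ and $\phi_1 = f_1^{-1} \circ f_0$; each $\phi_t$ fixes $\del W$ pointwise and preserves $[\fraks]$. Define $\Phi : W \times [0,1] \to W \times [0,1]$ by $\Phi(x,t) := (\phi_t(x), t)$.

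A direct check shows that $\Phi$ descends to a diffeomorphism $\bar{\Phi} : E_{f_0} \to E_{f_1}$ between the mapping tori: under the identification $(x, 1) \sim (f_0(x), 0)$ of $E_{f_0}$, we have
\[
\Phi(x, 1) = (f_1^{-1}f_0(x), 1) \sim_{f_1} (f_1 \cdot f_1^{-1}f_0(x), 0) = (f_0(x), 0) = \Phi(f_0(x), 0)
\]
in $E_{f_1}$. Since $\Phi$ preserves the $[0,1]$-coordinate, $\bar{\Phi}$ covers $\id_{S^1}$; since each $\phi_t$ fixes the boundary pointwise, $\bar{\Phi}$ is the identity on the trivialized boundary bundle $S^1 \times \del W$. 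Choosing any continuous lift of $\{\phi_t\}$ to a path in $\Aut((W,\fraks),\del)$---which exists because $\phi_t$ preserves $[\fraks]$---promotes $\bar{\Phi}$ to an isomorphism of $\Aut((W,\fraks),\del)$-bundles. Applying \cref{independence of most general inv} then yields $FKM(W,\fraks,\xi,f_0) = FKM(W,\fraks,\xi,f_1)$.

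The only potential subtlety is ensuring that $\bar{\Phi}$ genuinely realizes an isomorphism of $\Spinc$-automorphism bundles, rather than merely of the underlying smooth bundles; this is absorbed by \cref{lem: indep of lift}, which states that the invariant is independent of the chosen lift used to form either mapping torus. The same construction simultaneously handles the signed refinement of \cref{subsection A signed refinement of m for diffeomorphisms}: both $\phi_0$ and $\phi_1$ lie in $\Diff_H(W,[\fraks],\del)$ whenever $f_0,f_1$ do, and $\bar{\Phi}$ sends the section of the orientation bundle determined by a fixed element of $\Lambda(W,\fraks,\xi)$ (via \cref{lem: diffeo moduli ori}) to the corresponding section on $E_{f_1}$, so the signed counts agree as well.
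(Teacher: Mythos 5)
Your proposal is correct, but it takes a genuinely different route from the paper's. The paper first reduces the general case to $f \simeq \id$ via the additivity formula (\cref{prop: additivity}) together with $FKM(W,\id)=0$ (\cref{lem: identification then trivial}); then, fixing a generic unparametrized perturbation $\eta$ and taking $\eta_t = f_t^{\ast}\eta$ and $g_t$ the underlying family of metrics along the isotopy $\{f_t\}$, it observes that each slice $\mathcal{M}(W, g_t, \eta_t)$ is diffeomorphic via $f_t$ to $\mathcal{M}(W, g_0, \eta_0)$, which is empty by the formal-dimension constraint, so the parametrized moduli space is empty and the count vanishes. You instead convert the isotopy into an explicit isomorphism of mapping-torus $\Aut((W,\fraks),\del)$-bundles $\bar{\Phi} : E_{f_0} \to E_{f_1}$ covering $\id_{S^1}$ and fixing the boundary trivialization, and appeal to the isomorphism-invariance clause of \cref{independence of most general inv}. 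Your construction of $\bar{\Phi}$ is correct (the descent computation, the boundary compatibility, and the lift along the fibration $\Aut((W,\fraks),\del)\to\Diff(W,[\fraks],\del)$ all check out, with \cref{lem: indep of lift} absorbing the lift ambiguity). The trade-off is that your argument leans on the ``depends only on the isomorphism class of $E$'' assertion in \cref{independence of most general inv}, which the paper states but does not really elaborate on in its proof, whereas the paper's proof is self-contained at the level of the moduli space; on the other hand, your argument is cleaner conceptually and does not need the additivity formula. For the $\Z$-valued refinement you assert but do not verify that $\bar{\Phi}$ carries the orientation section determined by a fixed element of $\Lambda(W,\fraks,\xi)$ to the corresponding one on $E_{f_1}$---this is plausible given \cref{lem: diffeo moduli ori} but deserves a line of justification (essentially that the identification of orientation sets is natural under pullback by $\phi_t$), whereas the paper's argument, being a vanishing statement, sidesteps orientation comparison entirely.
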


\begin{proof}
By \cref{lem: identification then trivial} and \cref{prop: additivity}, it suffices to check that if $f$ is isotopic to the identity, then we have $FKM(W, f)=0$.
Take a generic unparametrized perturbation $\eta$.
Let $f_t$ be a smooth isotopy from $\id$ to $f$.
Let $\eta_t =f_t^{\ast}\eta$, and $g_t$ be the underlying family of metrics.
The $\mathcal{M} (W, g_t, \eta_t)$ is diffeomorphic to $\mathcal{M} (W, g_0, \eta_t)$, which is empty.
\end{proof}

\begin{cor}
If $FKM(W, \fraks, \xi, f) \neq 0$, then $f$ is not isotopic to the identity through $\Diff(W,[\fraks],\del)$.
\end{cor}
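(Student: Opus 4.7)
The plan is to prove this by contraposition, as an immediate consequence of the two preceding results. Suppose $f$ is smoothly isotopic to the identity within $\Diff(W,[\fraks],\del)$. By \cref{lem: rel inv under isotopy}, the invariant $FKM(W,\fraks,\xi,\cdot)$ takes the same value on $f$ as on $\id$. By \cref{lem: identification then trivial}, that common value is $0$. Hence $FKM(W,\fraks,\xi,f)=0$, contradicting the hypothesis.

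There is really no obstacle here: the corollary is a formal rephrasing of the isotopy invariance plus vanishing at the identity, and one might even merge it into \cref{lem: rel inv under isotopy} as a remark. The only minor point worth noting is that the statement is about isotopy through $\Diff(W,[\fraks],\del)$ (i.e.\ preserving the isomorphism class of $\fraks$ and fixing $\partial W$ pointwise), which is exactly the class of isotopies under which \cref{lem: rel inv under isotopy} was proven to give invariance, so no additional argument is needed.
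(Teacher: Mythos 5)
Your proof is correct and matches the paper's argument exactly: the paper likewise deduces the corollary directly from \cref{lem: identification then trivial} and \cref{lem: rel inv under isotopy}, and your contrapositive spelling-out is just the explicit version of that one-line derivation.
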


\begin{proof}
This follows from \cref{lem: identification then trivial,lem: rel inv under isotopy}.
\end{proof} 

We end up with this \lcnamecref{Properties of Kronheimer--Mrowka's invariant for compositions of diffeomorphisms} by summarizing the above properties:

\begin{cor}
\label{cor: summary of properties of FKM}
The families Kronheimer--Mrowka invariant defines homomorphisms
\[
FKM(W, \fraks, \xi, \bullet) : \pi_0(\Diff(W, [\fraks],\del)) \to \Z_2
\]
and
\[
FKM(W, \fraks, \xi, \bullet) : \pi_0(\Diff_H(W, [\fraks],\del)) \to \Z.
\]
\end{cor}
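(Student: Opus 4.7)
The plan is to assemble the corollary entirely from results already established in this section, treating it as a clean packaging of those facts rather than as requiring new input. The two assertions to check are (a) that $FKM(W,\fraks,\xi,\bullet)$ descends from the group of diffeomorphisms to the set of isotopy classes, and (b) that the resulting map on $\pi_0$ is a group homomorphism with target $\Z_2$ (respectively $\Z$ on the homologically trivial subgroup).

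For well-definedness on $\pi_0$, I would invoke \cref{lem: rel inv under isotopy} directly: if $f_0$ and $f_1$ are smoothly isotopic in $\Diff(W,[\fraks],\del)$, then $f_1 \circ f_0^{-1}$ is isotopic to the identity, so its families Kronheimer--Mrowka invariant vanishes. Combined with the additivity formula from \cref{prop: additivity}, this gives $FKM(W,\fraks,\xi,f_0)=FKM(W,\fraks,\xi,f_1)$. The same argument runs verbatim for the signed refinement once an element of $\Lambda(W,\fraks,\xi)$ has been fixed, because both the isotopy invariance and the additivity proofs of \cref{lem: rel inv under isotopy,prop: additivity} were stated to go through over $\Z$ for homologically trivial diffeomorphisms.

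For the homomorphism property, I would simply quote the additivity formula
\[
FKM(W,\fraks,\xi,f' \circ f) = FKM(W,\fraks,\xi,f) + FKM(W,\fraks,\xi,f')
\]
from \cref{prop: additivity}, noting that the target is abelian in both cases ($\Z_2$ and $\Z$), so the map automatically factors through any subgroup of interest. One should check that the homologically trivial subgroup $\Diff_H(W,[\fraks],\del)$ is preserved under composition, which is immediate since the $H_\ast$-action is a group homomorphism $\Diff(W,[\fraks],\del) \to \Aut(H_\ast(W))$, so its kernel is a subgroup.

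I do not expect a genuine obstacle here; the only mildly subtle point is verifying that \cref{prop: additivity} really does output a $\Z$-valued identity under the fixed orientation in $\Lambda(W,\fraks,\xi)$. This requires observing that the orientation section from \cref{lem: diffeo moduli ori} on the mapping torus of $f'\circ f$ agrees with the concatenation of the orientation sections on the mapping tori of $f$ and $f'$, which in turn follows because both sections are induced from the same fixed element of $\Lambda(W,\fraks,\xi)$ by restriction to the fiber. Once this compatibility is recorded, the corollary is immediate.
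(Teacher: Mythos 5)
Your proposal is correct and takes essentially the same route as the paper, which proves the corollary by citing \cref{prop: additivity,lem: identification then trivial,lem: rel inv under isotopy} directly. One small remark: once you invoke \cref{lem: rel inv under isotopy}, the descent to $\pi_0$ is immediate (isotopic diffeomorphisms already have equal $FKM$), so the detour through $f_1 \circ f_0^{-1}$ and additivity is not needed there; the additivity formula is then exactly what gives the homomorphism property, and your observation about the orientation section on the concatenated mapping torus agreeing with the one induced from the fixed element of $\Lambda(W,\fraks,\xi)$ is the implicit content of the $\Z$-valued half of \cref{prop: additivity}.
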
 

\begin{proof}
This follows from \cref{prop: additivity,lem: identification then trivial,lem: rel inv under isotopy}.
\end{proof}

\subsection{Isotopy of absolute diffeomorphisms}
\label{subsection Isotopy of absolute diffeomorphisms}

We now consider a slight refinement of the families Kronheimer--Mrowka invariant for diffeomorphisms defined until \cref{subsection A signed refinement of m for diffeomorphisms} to take into account isotopies of diffeomorphisms that are not necessarily the identity on the boundary. We need to treat a family of contact structures on the boundary in Kronheimer--Mrowka's setting. Such a situation is also treated in \cite{J21}.

For a contact structure $\xi$ on an oriented closed 3-manifold $Y$, let $[\xi]$ denote the isotopy class of $\xi$.
Let $W$ be a compact oriented smooth 4-manifold bounded by $Y$.
Let $f \in \Diff(W, [\fraks], \del)$ and $\gamma$ be a homotopy class of a loop in $\pi_1(\Xi^{\mathrm{cont}}(Y), \xi)$.
Henceforth we fix $\xi$ and abbreviate $\pi_1(\Xi^{\mathrm{cont}}(Y), \xi)$ as $\pi_1(\Xi^{\mathrm{cont}}(Y))$.
Pick a representative ${\tilde{\gamma}} : S^1 \to \Xi^{\mathrm{cont}}(Y)$ of $\gamma$ and a section $s^\mathcal{R} : S^1 \to E^{\tilde\gamma}_\mathcal{R}$.
Then we defined a $\Z_2$-valued invariant
\[
FKM(W, [\fraks], \xi, f, \gamma)
 \in \Z_2. 
\]

For $f$ and $\gamma$, we can define the monodromy action on $\Lambda(W, \s, \xi)$.
If this action is trivial, we may count the parametrized moduli space over $\Z$, and thus can define
\[
FKM(W, [\fraks], \xi, f, \gamma)
 \in \Z, 
\]
whose sign is fixed once we choose an element of $\Lambda(W, \s, \xi)$.
Henceforth we fix an element of $\Lambda(W, \s, \xi)$.
It is useful to note that, for a pair admitting a square root, say $(f^2,\gamma^2) \in \Diff(W, [\fraks], \del) \times \pi_1(\Xi^{\mathrm{cont}}(Y), \xi)$, the corresponding monodromy action is trivial.
Let us summarize the situation in the following diagram:
 \[
   \xymatrix{
     \Diff(W, [\fraks], \del) \times \pi_1(\Xi^{\mathrm{cont}}(Y), \xi) \ar[rrr]^-{FKM(W, [\fraks], \xi, \bullet, \bullet)} & & & \Z_2 \\
     \Set{f^2 | f \in \Diff(W, [\fraks], \del)} \times \Set{\gamma^2 | \gamma \in \pi_1(\Xi^{\mathrm{cont}}(Y), \xi)}\ar[rrr]_-{FKM(W, [\fraks], \xi, \bullet, \bullet)}\ar@{^{(}-_>}[u] &&& \Z. \ar[u]^{\text{mod 2}}
   }
\]

The cobordism argument as in \cref{independence of most general inv} enables us to prove the invariance of the signed and refined families Kronheimer--Mrowka's invariant. 

\begin{prop}
\label{lem: ab diff: vanishing for some gamma}
Let $f \in \Diff(W,[\fraks], \del)$.
If $f$ is isotopic to the identity through $\Diff(W)$, then there exists $\gamma \in \pi_1(\Xi^{\mathrm{cont}}(Y))$ such that
\begin{align}
\label{eq: vanishing abs diff iso 0}
FKM(W, [\fraks], \xi, f, \gamma)=0 \in \Z_2 
\end{align}
and
\begin{align}
\label{eq: vanishing abs diff iso 1}
FKM(W, [\fraks], \xi, f^2, \gamma^2)=0 \in \Z.
\end{align}
\end{prop}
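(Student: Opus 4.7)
My plan is to exploit the hypothesized ambient isotopy $\{f_t\}_{t \in [0,1]} \subset \Diff(W)$ from $\id$ to $f$ in order to construct the loop $\gamma$ and simultaneously trivialize the mapping torus $E_f$. Since $f$ fixes $\partial W = Y$ pointwise while the isotopy in general does not, the restrictions $\{f_t|_Y\}_{t \in [0,1]}$ form a loop in $\Diff(Y)$ based at $\id$. I would define $\tilde{\gamma} : S^1 \to \Xi^{\mathrm{cont}}(Y)$ by $\tilde{\gamma}(t) := ((f_t|_Y)^{-1})^*\xi$, which is a based loop at $\xi$, and set $\gamma := [\tilde{\gamma}] \in \pi_1(\Xi^{\mathrm{cont}}(Y),\xi)$.

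The isotopy also yields a bundle isomorphism $\Phi : W \times S^1 \xrightarrow{\sim} E_f$ whose restriction to the fiber over $t$ is $f_t : W \to W$. A direct computation then confirms that the pullback via $\Phi$ of the family $\tilde{\gamma}$ on $\partial E_f = Y \times S^1$ is the constant family $\xi$ on $\partial(W \times S^1)$. Hence the parametrized Seiberg--Witten problem underlying $FKM(W, [\fraks], \xi, f, \gamma)$---posed on $E_f \to S^1$ with boundary contact family $\tilde{\gamma}$---is identified, via $\Phi$, with the analogous problem on the trivial bundle $W \times S^1 \to S^1$ equipped with the constant boundary contact structure $\xi$.

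On the trivialized bundle, I would choose all auxiliary data $(\theta, J, g, A_0^W, \Phi_0^W, \sigma, \eta)$ appearing in the setup of \cref{section Families Kronheimer--Mrowka invariant} to be pulled back from a fixed unparametrized generic choice on $W$. The resulting parametrized moduli space is then literally $S^1 \times M(W, \fraks, \xi)$. Since the parametrized moduli space has formal dimension zero, $M(W, \fraks, \xi)$ has formal dimension $d(W, \fraks, \xi) = -1$, so for a generic $\eta$ it is empty; hence the parametrized moduli space is empty too, and $FKM(W, [\fraks], \xi, f, \gamma) = 0$ in $\Z_2$.

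For the signed statement, the concatenation $\{f_t\} \ast \{f \circ f_t\}$ provides an isotopy in $\Diff(W)$ from $\id$ to $f^2$; since $f|_Y = \id$, its boundary trace in $\Xi^{\mathrm{cont}}(Y)$ is $\tilde{\gamma} \ast \tilde{\gamma}$, representing $\gamma^2$. Applying the same trivialization argument to $(f^2, \gamma^2)$ in place of $(f, \gamma)$ again yields an empty parametrized moduli space, and the signed count of an empty moduli space is $0 \in \Z$ regardless of orientation. The main technical point to verify carefully will be that the isomorphism $\Phi$ lifts compatibly with the fiberwise $\Spinc$ structure and with sections of the fiber bundles $E_{\mathcal{Q}}^{\tilde{\Gamma}}$ and $E_{\mathcal{R}}^{\tilde{\Gamma}}$, so that the parametrized moduli spaces on the two sides are genuinely identified. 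This should be routine, as those bundles are defined as pull-backs along $(\id, \tilde{\Gamma})$ and $\Phi$ combined with the isotopy produces the required compatible identifications.
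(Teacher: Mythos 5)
Your proposal is correct and takes essentially the same approach as the paper's proof: both use the ambient isotopy $\{f_t\}$ in $\Diff(W)$ to determine the boundary loop $\gamma$ and to reduce the parametrized problem over $S^1$ to the (empty, by the formal-dimensional condition $d(W,\fraks,\xi)=-1$) unparametrized moduli space for a single generic choice of auxiliary data, with the $\gamma^2$ case handled by concatenating two copies of the resulting path. The paper phrases this via the pull-back section $s(t) = f_t^\ast a$ of $\tilde{\gamma}^\ast \mathcal{R}(Y, W, \fraks)$ rather than an explicit bundle trivialization $\Phi : W\times S^1 \to E_f$, and uses $\tilde{\gamma}(t) = f_t^\ast \xi$ rather than $((f_t|_Y)^{-1})^\ast\xi$, but these are cosmetic differences that do not affect the argument.
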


\begin{proof}
We may suppose that $d(W,\fraks, \xi) + 1=0$.
Let $f_t$ be a path in $\Diff(W)$ between $f$ and the identity.
Define a path $\tilde{\gamma} : [0,1] \to \Xi^{\mathrm{cont}}(Y)$ by $\tilde{\gamma}(t) = f_t^\ast \xi$, and set $\gamma=[\tilde{\gamma}] \in \pi_1(\Xi^{\mathrm{cont}}(Y))$.
Pick a generic element $a$ of the fiber of $\mathcal{R}(Y, W, \fraks) \to \Xi^{\mathrm{cont}}(Y)$ over the $\xi$.
The pull-back $s(t) = f_t^\ast a$ gives rise to a section $s : [0,1] \to \tilde{\gamma}^\ast\mathcal{R}(Y, W, \fraks)$.
By formal-dimensional reason, the moduli space for $a$ is empty.
Moreover, the pull-back under $f$ induces a homeomorphism between the moduli space for $a$ and that for $f^\ast a$, and hence the parametrized moduli space for $s$ is empty.
This completes the proof of \eqref{eq: vanishing abs diff iso 0}.

Next we prove \eqref{eq: vanishing abs diff iso 1}.
Let $\tilde{\gamma}_\# : [0,1] \to \Xi^{\mathrm{cont}}(Y)$ denote the concatenated path of two copies of $\tilde{\gamma}$.
The path $\tilde{\gamma}_\#$ is a representative of $\gamma^2 \in \pi_1(\Xi^{\mathrm{cont}}(Y))$.
Define a section $s' : [0,1] \to \tilde{\gamma}^\ast\mathcal{R}(Y, W, \fraks)$ by $s'(t)  = f^\ast(f_t^\ast a)$.
Namely, $s'$ is the pull-back section of $s$ under $f$.
By concatenating $s$ with $s'$, we obtain a section $s \cup s' : [0,1] \to \tilde{\gamma}_\#^\ast\mathcal{R}(Y, W, \fraks)$.
The left-hand side of \eqref{eq: vanishing abs diff iso 1} is the signed counting of the parametrized moduli space for $s \cup s'$, but again the moduli space is empty.
Thus we have \eqref{eq: vanishing abs diff iso 1}.
\end{proof}

\cref{lem: ab diff: vanishing for some gamma} can be generalized more:

\begin{prop}
\label{lem: ab diff: vanishing for some gamma general}
Let $f, g \in \Diff(W,[\fraks], \del)$.
If $f$ and $g$ are isotopic to each other through $\Diff(W)$, then there exists $\gamma \in \pi_1(\Xi^{\mathrm{cont}}(Y))$ such that
\begin{align}
\label{eq: equality FKM gamma ab isptopy}
FKM(W, [\fraks], \xi, f, \gamma)
= FKM(W, [\fraks], \xi, g, \gamma) \in \Z_2
\end{align}
and 
\begin{align}
\label{eq: equality FKM gamma ab isptopy1}
FKM(W, [\fraks], \xi, f^2, \gamma^2)
= FKM(W, [\fraks], \xi, g^2, \gamma^2) \in \Z.
\end{align}
\end{prop}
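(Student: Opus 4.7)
The plan is to generalize the proof of \cref{lem: ab diff: vanishing for some gamma} by using a 2-parameter family built from an isotopy between $g$ and $f$ in $\Diff(W)$. The resulting parametrized moduli space will provide the desired cobordism between the moduli spaces for $(E_g, \tilde\gamma)$ and $(E_f, \tilde\gamma)$.

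First I would fix an isotopy $f_s \colon [0,1] \to \Diff(W)$ with $f_0 = g$ and $f_1 = f$, and set $\phi_s = f_s|_Y$, which is a loop in $\Diff(Y)$ based at $\id_Y$ since $f$ and $g$ both fix $\del W = Y$ pointwise. In analogy with the proof of \cref{lem: ab diff: vanishing for some gamma}, the natural candidate is
\[
\gamma := [t \mapsto \phi_t^* \xi] \in \pi_1(\Xi^{\mathrm{cont}}(Y), \xi).
\]
Next I would form the 2-parameter family $\tilde E \to [0,1]_s \times S^1_t$ by the twisted mapping-torus construction
\[
\tilde E = W \times [0,1]_s \times \R_t/{\sim}, \qquad (x,s,t) \sim (f_s(x), s, t+1),
\]
whose restrictions at $s = 0$ and $s = 1$ recover the mapping tori $E_g$ and $E_f$, and whose fiberwise boundary is a $Y$-bundle with monodromy $\phi_s$ along $S^1_t$. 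On this boundary I would choose a section $\tilde\gamma \colon [0,1]_s \times \R_t \to \Xi^{\mathrm{cont}}(Y)$ satisfying the equivariance $\tilde\gamma(s, t+1) = \phi_s^* \tilde\gamma(s, t)$, in such a way that both $\tilde\gamma(0, \cdot)$ and $\tilde\gamma(1, \cdot)$ represent $\gamma$ in $\pi_1(\Xi^{\mathrm{cont}}(Y), \xi)$.

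Having selected a generic families perturbation of the Seiberg--Witten data, the parametrized moduli space over $\tilde E$ would be a compact smooth $1$-manifold with boundary $\mathcal{M}(E_g, \tilde\gamma(0, \cdot)) \sqcup \mathcal{M}(E_f, \tilde\gamma(1, \cdot))$; counting boundary components mod $2$ yields \eqref{eq: equality FKM gamma ab isptopy}. For the $\Z$-valued identity \eqref{eq: equality FKM gamma ab isptopy1}, the same construction applied to the squared pair $(f^2, g^2)$ via the isotopy $\{f_s^2\}_{s \in [0,1]}$ produces boundary loop class $\gamma^2$; since $(f^2, \gamma^2)$ admits the square root $(f, \gamma)$, the monodromy action on $\Lambda(W, \s, \xi)$ is trivial as noted before the proposition, and the signed cobordism then delivers the equality in $\Z$.

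The main obstacle I anticipate is the step of choosing $\tilde\gamma$ with the prescribed agreement of its endpoint loops: the naive ansatz $\tilde\gamma(s, t) = \phi_{st}^* \xi$ produces the constant loop at $s = 0$ and the loop $\phi_\bullet^* \xi$ at $s = 1$, which a priori lie in different classes in $\pi_1(\Xi^{\mathrm{cont}}(Y), \xi)$. Overcoming this requires exploiting that each $\phi_s^*$ lies in the identity component of $\Diff(\Xi^{\mathrm{cont}}(Y))$, so that the induced $\Xi^{\mathrm{cont}}(Y)$-bundle on the fiberwise boundary of $\tilde E$ is trivializable, combined with the free-homotopy invariance of $FKM$ recorded in \cref{independence of most general inv}.
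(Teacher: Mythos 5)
The paper's proof of this proposition does not use a two-parameter cobordism at all: it directly exhibits a \emph{diffeomorphism} between the parametrized moduli spaces that compute $FKM(W,[\fraks],\xi,f,\gamma)$ and $FKM(W,[\fraks],\xi,g,\gamma)$ (for the same $\gamma = [t\mapsto h_t^*\xi]$), obtained by pulling back the entire one-parameter section $s_f^{\mathcal{R}}$ along the family of diffeomorphisms $\sqcup_t(h_t f^{-1})$, where $h_t$ is the isotopy between $f$ and $g$. The endpoint condition $s_f^{\mathcal{R}}(1)=f^*s_f^{\mathcal{R}}(0)$ transforms into the required $s_g^{\mathcal{R}}(1)=g^*s_g^{\mathcal{R}}(0)$, and the two moduli spaces are literally identified, which gives equality of the counts over $\Z_2$ (and over $\Z$ for the doubled version by concatenating with the $f^*$- and $g^*$-pullback sections). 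No cobordism, no higher-parameter family, and no extension of the $FKM$ framework is needed.

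Your proposal instead builds the mapping cylinder $\tilde E\to[0,1]_s\times S^1_t$ with monodromy $f_s$, and this is where the genuine gap lies. For $s\in(0,1)$ the diffeomorphism $f_s$ does not fix $\partial W$ pointwise, so the structure group of $\tilde E$ does not reduce to $\Aut_\partial(W,\fraks)$; your construction therefore sits outside the framework the paper actually develops, in which the fiberwise boundary is trivialized and the contact datum is supplied by a genuine map $\tilde\Gamma\colon B\to\Xi^{\mathrm{cont}}(Y)$. More importantly, you correctly observe that the natural equivariant ansatz $\tilde\gamma(s,t)=\phi_{st}^*\xi$ gives the constant loop at one endpoint and $[\phi_\bullet^*\xi]$ at the other, which a priori lie in different classes in $\pi_1(\Xi^{\mathrm{cont}}(Y),\xi)$ — but your proposed fix does not close this gap. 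Trivializability of the $\Xi^{\mathrm{cont}}(Y)$-bundle over $[0,1]\times S^1$ is correct and cheap (the monodromies are in the identity component), yet it does not by itself produce an equivariant section whose two boundary loops represent the \emph{same} class: a trivialization identifies the fibers, but the section's endpoint loops are still whatever they are, and there is an obstruction to choosing a filling section with prescribed homotopy classes on both ends. Likewise, \cref{independence of most general inv} only asserts invariance under (based) homotopy of $\tilde\Gamma$ and change of the auxiliary section for a \emph{fixed} $\Aut_\partial$-bundle $E\to B$; it says nothing about 2-parameter families with nontrivial boundary monodromy. To make your argument rigorous you would need both to extend the definition of the parametrized moduli space to such families and to exhibit an explicit equivariant section with matching endpoint classes — neither of which is addressed. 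The paper's pull-back identification bypasses all of this.
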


\begin{proof}
We may suppose that $d(W,\fraks, \xi) + 1=0$.
Let $h_t$ be a path in $\Diff(W)$ between $f$ and $g$.
Define a path $\tilde{\gamma} : [0,1] \to \Xi^{\mathrm{cont}}(Y)$ by $\tilde{\gamma}(t) = h_t^\ast \xi$, and set $\gamma=[\tilde{\gamma}] \in \pi_1(\Xi^{\mathrm{cont}}(Y))$.
Take a section $s_f^{\mathcal{R}} : [0,1] \to \tilde{\gamma}^\ast \mathcal{R}(Y, W, \fraks)$ so that $s_f^{\mathcal{R}}(1)=f^\ast s_f^{\mathcal{R}}(0)$.
The quantity $FKM(W, [\fraks], \xi, f, \gamma)$
is the signed counting of the parametrized moduli space for $s_f^{\mathcal{R}}$.
Let $s_g^{\mathcal{R}}$ denote the pull-back of the section $s_f^{\mathcal{R}}$ under $\sqcup_{t \in [0,1]}(h_t^\ast f^{-1})$.
This section satisfies that $s_g^{\mathcal{R}}(1)=g^\ast s_g^{\mathcal{R}}(0)$, and $FKM(W, [\fraks], \xi, g, \gamma)$ can be calculated by the signed counting of the parametrized moduli space for this section $s_g^{\mathcal{R}}$.
However, the pull-back under $\sqcup_{t \in [0,1]}(h_t^\ast (f^{-1})^\ast)$ gives rise to a diffeomorphism between these moduli spaces corresponding to 
$s_f^{\mathcal{R}}$ and $s_g^{\mathcal{R}}$, and this implies \eqref{eq: equality FKM gamma ab isptopy}.

To prove \eqref{eq: equality FKM gamma ab isptopy1}, as in the proof of \cref{lem: ab diff: vanishing for some gamma},
let $\tilde{\gamma}_\# : [0,1] \to \Xi^{\mathrm{cont}}(Y)$ denote the concatenated path of two copies of $\tilde{\gamma}$, which represents $\gamma^2 \in \pi_1(\Xi^{\mathrm{cont}}(Y))$.
Define a section $(s')_f^{\mathcal{R}} : [0,1] \to \tilde{\gamma}^\ast \mathcal{R}(Y, W, \fraks)$ as the pull-back section of $s_f^{\mathcal{R}}$ under $f$.
Similarly, define $(s')_g^{\mathcal{R}}$ as the pull-back section of $s_g^{\mathcal{R}}$ under $g $.
The signed counting of the concatenated section $s_f^{\mathcal{R}} \cup (s')_f^{\mathcal{R}}$ is the left-hand side of \eqref{eq: equality FKM gamma ab isptopy1}, and similarly for $g$.
Again the pull-back under $\sqcup_{t \in [0,1]}(h_t^\ast (f^{-1})^\ast)$ gives rise to a diffeomorphism between these moduli spaces corresponding to 
$(s')_f^{\mathcal{R}}$ and $(s')_g^{\mathcal{R}}$, and together with the diffeomorphism for $s_f^{\mathcal{R}}$ and $s_g^{\mathcal{R}}$ considered above, we obtain a diffeomorphism between these moduli spaces corresponding to $s_f^{\mathcal{R}} \cup (s')_f^{\mathcal{R}}$ and $s_g^{\mathcal{R}} \cup (s')_g^{\mathcal{R}}$.
Thus we have \eqref{eq: equality FKM gamma ab isptopy1}. 
\end{proof}

\begin{prop}
\label{lem: ab diff: additive}
Let $f, f' \in \Diff(W, [\fraks], \del)$ and $\gamma \in \pi_1(\Xi^{\mathrm{cont}}(Y))$.
Then we have
\begin{align}
\label{eq: ab diff additive}
\begin{split}
FKM(W, [\s], \xi, f \circ f', \gamma)
&= FKM(W, [\s], \xi, f)+FKM(W, [\s], \xi, f', \gamma)\\
&= FKM(W, [\s], \xi, f, \gamma)+FKM(W, [\s], \xi, f')
\end{split}
\end{align}
in $\Z_2$.
Moreover, if all of the diffeomorphisms in \eqref{eq: ab diff additive} and $\gamma$ induce the trivial monodromy on $\Lambda(W,\mathfrak{s},\xi)$, then the equalities \eqref{eq: ab diff additive} hold over $\Z$.
\end{prop}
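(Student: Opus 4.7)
The plan is to run a family-parameter version of the argument given for Proposition \ref{prop: additivity}, with the extra bookkeeping needed for the loop $\gamma \in \pi_{1}(\Xi^{\mathrm{cont}}(Y),\xi)$ of contact structures on $\partial W$. First, I reduce to the case $d(W,\fraks,\xi)+1=0$, since otherwise every term in \eqref{eq: ab diff additive} vanishes by definition. Fix an element of $\Lambda(W,\fraks,\xi)$ (for the $\Z$-valued statement) and a regular unparametrized configuration $(g,A_{0},\Phi_{0},\eta) \in \mathcal{R}(Y,W,\fraks,\xi)$, so that the (zero-dimensional) unparametrized moduli space is empty, in particular.

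The main construction is to cut the base circle $S^{1}$ at $t=1/2$ and interpret the moduli space $\mathcal{M}(E_{f\circ f',\tilde{\gamma}},\tilde\Gamma\equiv\gamma,s^\mathcal{R})$ as two parametrized moduli spaces glued along their endpoints. For the first equality in \eqref{eq: ab diff additive}, I choose a representative $\tilde\gamma:[0,1]\to \Xi^{\mathrm{cont}}(Y)$ of $\gamma$ which is constant at $\xi$ on $[0,1/2]$ and realizes the loop on $[1/2,1]$ (possible by reparametrization, since $\gamma$ is based at $\xi$). Pick a section $s_t^\mathcal{R}$ over $[0,1]$ with
\begin{itemize}
\item $s_{0}^{\mathcal{R}}=(g,A_0,\Phi_{0},\eta)$ and $s_{1/2}^{\mathcal{R}}=f^{\ast}(g,A_0,\Phi_{0},\eta)$ on $[0,1/2]$, with constant contact data on the boundary, exactly as in the construction of $FKM(W,f)$;
\item $s_{t}^{\mathcal{R}}$ on $[1/2,1]$ given by the $f^{\ast}$-pullback of a regular section $\bar s_{t-1/2}^{\mathcal{R}}$ on $[0,1/2]$ with $\bar s_{0}^{\mathcal{R}}=(g,A_{0},\Phi_{0},\eta)$, $\bar s_{1/2}^{\mathcal{R}}=(f')^{\ast}(g,A_{0},\Phi_{0},\eta)$ and boundary contact structures running along $\tilde\gamma$, exactly as in the construction of $FKM(W,f',\gamma)$.
\end{itemize}
Since $f\in \Diff(W,[\fraks],\partial)$ fixes $\partial W$ pointwise and acts trivially on isotopy classes, pulling back by $f$ respects the boundary data coming from $\tilde\gamma$. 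The closing-up identification at $t=1$ is then $(f')^{\ast}\circ f^{\ast}=(f\circ f')^{\ast}$, which is the mapping torus for $f\circ f'$ as required. The fiberwise Fredholm problem decomposes accordingly, and the pull-back under $f$ gives a diffeomorphism between the parametrized moduli space on $[1/2,1]$ and the one computing $FKM(W,f',\gamma)$.

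Counting (mod $2$) gives $FKM(W,f\circ f',\gamma)=FKM(W,f)+FKM(W,f',\gamma)$. For the second equality, I run the same argument with the roles of the two halves reversed: put the loop $\gamma$ on $[0,1/2]$ together with the diffeomorphism $f$, and the constant loop on $[1/2,1]$ together with $f'$; after pulling back the second half by $f^{-1}$ one obtains $FKM(W,f,\gamma)+FKM(W,f')$. For the $\Z$-valued refinement, under the triviality-of-monodromy assumption, the fixed element of $\Lambda(W,\fraks,\xi)$ induces, via \cref{lem: diffeo moduli ori} and its loop-version in \cref{subsection A signed refinement of m for diffeomorphisms}, a coherent orientation on each of the two parametrized moduli spaces as well as on the concatenated one; the pullback by $f$ respects these orientations (again because the induced action on $\Lambda(W,\fraks,\xi)$ is trivial), so the mod-$2$ count upgrades to an integer count and the equalities \eqref{eq: ab diff additive} hold in $\Z$.

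The main subtlety, and the only place one has to be really careful, is the coherence of orientations for the signed version: one must check that the monodromy of $\Lambda(E)\to \mathcal{B}(E)$ on the concatenated parametrized moduli space splits as the sum of the monodromies on the two halves, so that the triviality hypothesis for the three pairs $(f,\gamma)$, $(f',\gamma)$ (or $(f,\triv)$, $(f',\triv)$ depending on the version) and $(f\circ f',\gamma)$ is enough to orient everything compatibly. This is handled exactly as in the proof of \cref{lem: diffeo moduli ori} by using the excision identification $\psi$ from \eqref{det line identification} and the fact that the monodromy of $\Lambda(W,\fraks,\xi)$ is multiplicative in composition in $\Diff(W,[\fraks],\partial)\times \pi_{1}(\Xi^{\mathrm{cont}}(Y),\xi)$; once that is in place the cobordism-and-pullback argument above gives the signed additivity without further changes.
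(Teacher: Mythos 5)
Your proposal is correct and follows essentially the same line of argument as the paper: both concatenate a section with constant boundary contact data (computing $FKM(W,f)$) with a section whose boundary data runs along $\gamma$, the latter being the $f^{\ast}$-pullback of a section computing $FKM(W,f',\gamma)$, so that the endpoint identification $(f')^{\ast}\circ f^{\ast}=(f\circ f')^{\ast}$ yields the moduli space for $FKM(W,f\circ f',\gamma)$.
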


\begin{proof}
Denote by $\tilde{\gamma}_\mathrm{const}$ the constant path at $\xi$ in $\Xi^{\mathrm{cont}}(Y)$ and set $\gamma_\mathrm{const}=[\tilde{\gamma}_\mathrm{const}] \in \pi_1(\Xi^{\mathrm{cont}}(Y))$.
By definition, we have
\[
FKM(W, [\s], \xi, f, \gamma_\mathrm{const})
= FKM(W, [\s], \xi, f).
\]

Pick a generic element $a \in \mathcal{P}(Y, W, \fraks,\xi)$.
Take a path $a$ between $a$ and $f^\ast a$ in $\mathcal{P}(Y, W, \fraks, \xi)$.
Such a path can be thought of as a section 
\[
s : [0,1] \to \tilde{\gamma}_{\mathrm{const}}^\ast\mathcal{P}(Y, W, \fraks) \cong [0,1] \times \mathcal{P}(Y, W, \fraks,\xi).
\]
Pick a loop $\tilde{\gamma}$ that represents $\gamma$, and take a generic section
\[
s' : [0,1] \to \tilde{\gamma}^\ast\mathcal{P}(Y, W, \fraks)
\]
so that $s'(0)=f^\ast a$ and $s'(1)=(f \circ f')^\ast a = f'^\ast(f^\ast a)$.
Then $FKM(W, [\s], \xi, f, \gamma_\mathrm{const})$ is the algebraic count of the parametrized moduli space for $s$, and $FKM(W, [\s], \xi, f', \gamma)$ is that for $s'$.
Thus the algebraic count of the parameterized moduli space for the path that is obtained by concatenating $s$ and $s'$ is the right-hand side of \eqref{eq: ab diff additive}.

On the other hand, parametrizing intervals, we can regard the concatenation of the sections $s$ and $s'$ as a section
\[
s\cup s' : [0,1] \to \tilde{\gamma}^\ast\mathcal{P}(Y, W, \fraks)
\]
such that $(s\cup s')(0)=a$ and $(s\cup s')(0)=(f \circ f')^\ast a$.
Therefore the algebraic count of the parameterized moduli space for $s \cup s'$ is the left-hand side of \eqref{eq: ab diff additive}.
This completes the proof of the first equality. 
The second equality follows just by a similar argument.
\end{proof}

\section{Several vanishing results}
\label{sectionSeveral vanishing results}

In this section, we prove several vanishing results for both of the families Seiberg--Witten invariant and Kronheimer--Mrokwa invariant.

Before stating several vanishing results, let us introduce a notion of {\it strong L-space} for convenience. 

\begin{defn}
A rational homology 3-sphere $Y$ is a {\it strong L-space} if there exists a Riemann metric $g$ on $Y$ such that there is no irreducible solutions to the Seiberg--Witten equation on $(Y, g, \s)$ for a Spin$^c$ structure $\s$ on $Y$. 
\end{defn}

Note that all strong L-spaces are L-space. However, the authors do not know whether the converse is true or not. 

We also recall the families Seiberg--Witten invariant for diffeomorphisms following \cite{Rub98}.
Let $X$ be a closed oriented smooth 4-manifold with $b^+_2(X)>2$ and $\s$ be a Spin$^c$ structure on $X$.
Fix a homology orientation of $X$.
Let $f : X \to X$ be an orientation-preserving diffeomorphism of $X$ such that $f^\ast\s=\s$ (precisely, $f^\ast\s$ is isomorphic to $\s$).
Then we can define a numerical invariant $FSW(X,\s,f)$, which takes value in $\Z$ if $f$ preserves the homology orientation, and which takes value in $\Z_2$ if $f$ reverses the homology orientation.
It is valid essentially only when the formal dimension of $\s$ is $-1$: otherwise the invariant $FSW(X,\s,f)$ is just defined to be zero.

\subsection{A family version of the vanishing result for embedded submanifolds} 
\subsubsection{Embeddings of 3-manifolds}
We first prove a family version of the vanishing result for embedded 3-manifolds.
For the original version, see \cite{Fr10} for example. 
\begin{thm}\label{Froyshov family}
Let $(X, \s)$ be a closed Spin$^c$ 4-manifold with $b^+_2(X)>0$ and $Y$ be a closed oriented 3-manifold.  
\begin{itemize}
    \item[(i)] 
    Suppose there is a smooth embedding $i : Y \to X$. 
Let $(Y, g )$ be a strong L-space for some metric $g$ on $Y$ and $f$ be an orientation-preserving self-diffeomorphism of $X$ such that 
\begin{itemize}
\item[$\circ$] $f^* \s = \s$, 
    \item[$\circ$] $f ( i (Y))  = i (Y)$ as subsets of $X$, and 
    \item[$\circ$] $f : (i (Y), i_* g) \to (i (Y), i_* g)$ is an isometry.  
\end{itemize}
We also impose that the map 
\[
H^2 ( X; \Q ) \to H^2(Y; \Q) 
\]
is non-zero. Then we have 
\[
 FSW(X, \s, f)= \begin{cases} 0 \in \Z \text{ if $f$ does not flip the homology orientation of $X$,} \\
  0 \in \Z_2 \text{ if $f$ flips the homology orientation of $X$.} \end{cases}
 \]
 
\item[(ii)] Suppose $X$ contains an essentially embedded smooth surface $S$ with non-zero genus and zero self-intersection that violates the adjunction inequality, i.e. we have
\[
2g(S)-2 < |\left<c_1(\fraks), [S]\right>|.
\]
Also, the normal sphere bundle $\partial \nu(S)$ is supposed to be $f|_{\partial \nu(S)}=\id_{\partial \nu(S)}$. 
 Then, we have 
\[
 FSW(X, \s, f)=  \begin{cases} 0 \in \Z \text{ if $f$ does not flip the homology orientation of $X$,} \\
  0 \in \Z_2 \text{ if $f$ flips the homology orientation of $X$.} \end{cases}
 \]
 \end{itemize}

\end{thm}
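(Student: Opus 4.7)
The plan is to prove both parts by a family neck-stretching argument, adapting the classical unparametrized vanishing theorems (cf.\ \cite{Fr10,KM97}) to Ruberman's families Seiberg--Witten setting. The common input in both cases is a codimension-one submanifold $Z\subset X$ preserved by $f$ --- either up to isometry in (i), or pointwise in (ii) --- so that the mapping torus $E_f\to S^1$ contains the fiberwise subbundle $S^1\times Z$ along which we may stretch a cylindrical neck equivariantly in the family.

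For part (i), I would take $Z=i(Y)$ with the fixed metric $i_*g$, and use that $f|_{i(Y)}$ is an isometry to insert a cylindrical neck of length $T$ along $S^1\times i(Y)$ in $E_f$, yielding a smooth one-parameter family of fiberwise metrics $\{g_{T,t}\}_{t\in S^1}$. With suitable families perturbations the parametrized moduli space $\cM_T$ is a compact manifold whose signed (resp.\ $\Z_2$-valued) count equals $FSW(X,\s,f)$ for every $T$. Letting $T\to\infty$ and invoking standard compactness on cylindrical ends, any limiting solution restricts on $\R\times Y$ to a finite-energy Seiberg--Witten trajectory between critical points of the Chern--Simons--Dirac functional on $(Y,g,\s|_Y)$; the strong L-space hypothesis forces all such critical points to be reducible, while the nontriviality of the restriction $H^2(X;\Q)\to H^2(Y;\Q)$ is meant to prevent the reducibles on $Y$ from extending consistently to a compatible solution on $X$. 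Hence $\cM_T=\emptyset$ for $T$ large, giving the vanishing.

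For part (ii), I would instead stretch along $\del\nu(S)$. Since $[S]^2=0$ the normal bundle of $S$ is trivial, so $\del\nu(S)\cong S^1\times S$, and the hypothesis $f|_{\del\nu(S)}=\id$ means the mapping torus contains $S^1\times\del\nu(S)$ as a product subbundle, making the family neck-stretch completely trivial. In the $T\to\infty$ limit, any surviving family SW-solution produces on the tubular-neighborhood side $\nu(S)\cup(\R_{\geq 0}\times\del\nu(S))$ a finite-energy SW solution in the $\Spinc$-structure $\s|_{\nu(S)}$, and by the standard Kronheimer--Mrowka adjunction argument this forces $2g(S)-2\geq|\langle c_1(\s),[S]\rangle|$, violating the hypothesis. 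Therefore the family moduli space is empty for $T$ large and $FSW(X,\s,f)=0$.

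The main obstacle I expect is the careful handling of potential reducibles in the one-parameter family: reducibles on $X$ generically occur in codimension one in the space of metrics and can a priori produce wall-crossing terms that would obstruct the neck-stretch conclusion. Part~(ii) is technically cleaner because $f$ is the identity on the entire neck, so the stretching is a genuinely trivial family and the argument reduces essentially to the slicewise unparametrized adjunction inequality. In part~(i) the role of the cohomological restriction hypothesis is precisely to control such reducibles, and making this rigorous --- identifying which reducibles on the long neck can glue to honest family solutions and verifying that the restriction-map hypothesis forces them to contribute trivially --- is where the proof should demand the most care.
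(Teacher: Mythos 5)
Your overall neck-stretching strategy is the one the paper actually uses, and your treatment of part~(ii) is essentially the paper's (stretch along $\del\nu(S)\cong S^1\times S$ with a product metric of constant scalar curvature on $S$, then invoke Kronheimer--Mrowka's adjunction argument for the resulting $3$-dimensional solution). However, in part~(i) you have not identified the mechanism by which the cohomological hypothesis is used, and the place you flag as ``where the proof should demand the most care'' is exactly the gap.

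The paper does not argue that ``reducibles on $Y$ fail to extend consistently to $X$,'' nor does it need any wall-crossing analysis. Instead, the hypothesis that $H^2(X;\Q)\to H^2(Y;\Q)$ is non-zero is used to choose a closed $2$-form $\eta$ on $X$ whose restriction $[\eta|_Y]\in H^2(Y;\R)$ is non-zero, and then to perturb the family Seiberg--Witten equations by $\epsilon\eta^+$ for small $\epsilon>0$. The point is that the perturbed $3$-dimensional equations on $(Y,g,\s|_Y)$
\[
F_{B^t}+\sigma(\phi,\phi)=\epsilon\eta|_Y,\qquad D_B\phi=0
\]
have \emph{no} solutions at all: reducibles are excluded because the curvature equation forces a cohomological constraint that $\epsilon[\eta|_Y]\ne 0$ obstructs, and irreducibles are excluded by the strong L-space hypothesis (which persists for $\epsilon$ small). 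The neck-stretching argument then proceeds by a uniform bound on the $\eta_0$-perturbed topological/analytic energy on the neck (this is the Claim in the paper's proof, using the equality of topological and analytic energies for solutions) to extract, along a subsequence, an energy-zero translation-invariant solution on a finite cylinder $[0,1]\times Y$, giving a solution to the perturbed $3$-dimensional equations --- contradiction. This single non-exact perturbation both produces the contradiction at the limit and keeps the parametrized moduli space away from reducibles on the $4$-manifold side, which is why the $b_2^+(X)>0$ hypothesis suffices and no separate wall-crossing argument is required. Without this device your argument, as written, does not close: you would need either a different mechanism to rule out the reducible critical point on $Y$ or a separate treatment of reducibles in the one-parameter family of metrics on $X$.
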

Note that \cref{Froyshov family}(ii) also follows from \cite[Theorem 1.2]{Ba20}. 
In the proof, we mainly follow the original Fr\o yshov's argument which uses a neck stretching argument and non-exact perturbations and Kronheimer-Mrowka's proof of the Thom conjecture \cite{KM94}.

\begin{proof}
We first prove (i). Since the proof of (ii) is similar to that of (i), we only write a sketch of proof of (ii). Because the family Seiberg--Witten invariant is an isotopy invariant, we can assume that $f$ can be a product in a neighborhood $N$ of $Y'$ which is isometry with respect to $g$.
Now, we consider the family version of the moduli space 
\[
\mathcal{PM} (X, \s, f)   \to S^1
\]
as a mapping torus of the moduli space over $I=[0,1]$: 
\[
\bigcup_{t\in [0,1]} \mathcal{M}(X, \s, g_t )  \to I,  
\]
where $g_t$ is a smooth 1-parameter family of metrics such that 
\begin{itemize}
    \item $ g_1 = f^*g_0$ and 
    \item for any $t\in [0,1]$, we have $g_t|_{N} = g + dt^2$.     
\end{itemize}
Here we take a metric $g$ so that there is no irreducible solution to the Seiberg--Witten equation on $ Y$. 
By assumption, we have a trivialization of  the family $E_f  \to S^1 $ obtained as the mapping torus of $f$ near $N$: 
\[
 E_{f |_{i (Y)} }  \subset E_f,  
\]
where  $E_{f|_{i (Y)}}$  us the mapping torus of $f|_{i (Y)}$. 

Now, near $E_{f |_{i (Y)} }$, we consider a neck stretching argument. We consider a family of metrics $g_{t,s}$ parametrized by $s \in [0, \infty)$ satisfying the following conditions: 
\begin{itemize}
    \item $g_{t,0} =  g_t$, 
    \item as Riemann manifolds, $(E_{f |_{i (Y)} }, g_{t,s}|_{N}) = (  [0,s+1]\times Y, g + dt^2) $, 
    \item outside of $ E_{f (i (Y))} $ in $E_f$, the metric $g_{t,s}$ coincides with $g_t$.
\end{itemize}
By the assumption $\operatorname{Im } (H^2 ( X; \Q ) \to H^2(Y; \Q) ) \neq 0$, we take a closed 2-form $\eta$ on $X$ such that 
\[
0\neq [\eta|_{Y} ] \in H ^2 (Y; \R).  
\]
Then, we conisder the perturbation of the family Seiberg--Witten equation on $E_f$ using $\eta$: 
\[
\begin{cases}\label{SW3}
   F^+_{A^t_t} + \sigma (\Phi_t, \Phi_t) = \epsilon \eta^+   \\ 
   D_{A_t} \Phi_t =0
\end{cases}
\]
for a small $\epsilon>0$. We take the $\epsilon$ so that there is no solution to the $\epsilon\eta$-perturbed Seiberg--Witten equation on $Y$ with respect to $(g', \s|_{Y})$: 
\[
\begin{cases}
   F_{B^t} + \sigma (\phi , \phi ) = \epsilon \eta|_{Y}  \\ 
   D_{B} \phi =0. 
\end{cases}
\]
Suppose $FSW(X, \s, f) \neq 0$. Put $\eta_0 := \epsilon \eta$. 
Now we take an increasing sequence $s_i \to \infty$. Then, since $FSW(X, \s, f) \neq 0$, there is a sequence of solutions $(A_i, \Phi_i)$ to the Seiberg--Witten equation with respect to $g_{t_i, s_i}$ for some $t_i \in [0,1]$.  
\begin{claim1}\label{uniform bound of energy}
We claim that 
\[
\sup_{i\in \Z_{>0}} \mathcal{E}^{top}_{\eta_0 , g_{t_i, s_i} } ( (A_i, \Phi_i)|_{E_{f|_{i (Y)} }} )  < \infty, 
\]
where, for a Spin$^c$ 4-manifold $W$ with boundary, the topological energy perturbed by $\eta_0$ is defined to be 
\[
\mathcal{E}_{\eta_0}^{top}(A, \Phi ) :=  \frac{1}{4} \int_W (F_{A^t}-4 \eta_0)  \wedge  (F_{A^t}-4 \eta_0) 
- \int_{\partial W } \langle \Phi|_Y, D_B \Phi|_Y \rangle.  
\]

\end{claim1}
\begin{proof}[Proof of \cref{uniform bound of energy}]
For a Spin$^c$ 4-manifold $W$ with boundary, define the perturbed analytical energy by 
\[
\mathcal{E}_{\eta_0}^{an}(A, \Phi)  := \frac{1}{4} \int_W|F_{A^t}-4 \eta_0|^2 + \int_W | \nabla_A \Phi |^2 + \frac{1}{4} \int_W (| \Phi|^2 + (s/2))^2
\]
\[
- \int_W \frac{s^2}{16} + 2 \int_W \langle \Phi, \rho (\eta_0) \Phi\rangle- \int_{\partial W } (H/2) |\Phi|^2, 
\]
where $s$ is the scalar curvature and $H$ is the mean curvature. 
It is proven that if $(A, \Phi)$ is a solution to the Seiberg--Witten equation perturbed by $\eta_0$, the equality 
\[
\mathcal{E}_{\eta_0}^{an}(A, \Phi) = \mathcal{E}_{\eta_0}^{top}(A, \Phi ). 
\]
holds (\cite[(29.6), Page 593]{KM07}). Since $X$ is closed, we know that, on $X$,
\[
\sup_{i\in \Z_{>0}} \mathcal{E}_{\eta_0}^{an}(A_i, \Phi_i )=\sup_{i\in \Z_{>0}} \mathcal{E}_{\eta_0}^{top}(A_i, \Phi_i ) < \infty . 
\]
On the other hand, we have 
\[
 \mathcal{E}_{\eta_0}^{an}(A_i, \Phi_i ) = \mathcal{E}_{\eta_0}^{an}( (A_i, \Phi_i)|_{E_{f |_{i (Y)} }}  ) + \mathcal{E}_{\eta_0}^{an}( (A_i, \Phi_i)|_{ (E_{f |_{i (Y)} }) ^c }  ) . 
\]
Since $X$ is compact, $\eta_0$ is bounded and $g_{t,s} $ is a compact family of metrics, 
we have lower bounds 
\[
-\infty < \inf_{i\in \Z_{>0}} \mathcal{E}_{\eta_0}^{an}( (A_i, \Phi_i)|_{ (E_{f |_{i (Y)} }) ^c } (A_i, \Phi_i ). 
\]
So, we see 
\[
\sup_{i\in \Z_{>0}} \mathcal{E} ^{top}_{\eta_0, g_{t_i, s_i} } ( (A_i, \Phi_i)|_{E_{f |_{i (Y)} }} ) = \sup_{i\in \Z_{>0}} \mathcal{E} ^{an}_{\eta_0, g_{t_i, s_i} } ( (A_i, \Phi_i)|_{E_{f |_{i (Y)} }} ) < \infty. 
\]
\end{proof}
By taking a subsequence, we can suppose $t_i \to t_\infty \in [0,1]$. 
As in the proof of \cite{KM97}, we can also take a subsequence of $(A_i, \Phi_i)$ so that 
\[
\mathcal{E}^{top}_{g_{t_i, s_i} }  ( (A_i, \Phi_i)|_{[l_i, l_i+1] \times Y \subset  E_{f |_{i (Y)} }} ) \to 0. 
\]

So, as the limit of $(A_i, \Phi_i)|_{[l_i, l_i+1] \times Y \subset  E_{f |_{i (Y)} }}$, we obtain an (perturbed) energy zero solution $(A_\infty , \Phi_\infty)$ on $[0,1]\times Y $. 
By considering the temporal gauge, this gives a solution to \eqref{SW3}. This gives a contradiction. 

The proof of (ii) is similar to (i). 
Let $Y$ be the product $S^1 \times S$. We take a Riemman metric $g$ on $Y$ forms
\[
g = dt^2 + g_0,  
\]
where $t$ is a coordinate of $S^1$ and $g_0$ has a constant scalar curvature. Kronheimer--Mrowka's argument in \cite{KM07} implies that if there is a solution to the Seiberg--Witten equation on $Y$ with respect to $(g, \s|_Y)$, then the adunction inequality 
\[
|\langle c_1 (\s) , [S] \rangle| \leq 2 g(S) -2
\]
holds(\cite[Proposition 40.1.1]{KM07}) when $g(S)>0$. Since we now are assuming the opposite inequality $|\langle c_1 (\s) , [S] \rangle| < 2 g(S) -2$, it is sufficient to get a solution to the Seiberg--Witten equation on $Y$ with respect to $(g, \s|_Y)$. The remaining part of the proof is the same as that of (i), i.e. we consider the neck stretching argument near $\partial (\nu (S))$.  This completes the proof. 
\end{proof}

We also provide a version of \cref{Froyshov family} for the Kronheimer--Mrowka invariant. 
\begin{thm}\label{Froyshov family1}
Let $(W, \s)$ be a compact Spin$^c$ 4-manifold with contact boundary $(Y,\xi)$. 
\begin{itemize}
    \item[(i)]
 Suppose there is a smooth embedding $i : Y \to W$ where $(Y, g )$ is a strong L-space for some metric $g$ on $Y$. 
Let $f$ be a self-diffeomorphism of $W$ such that 
\begin{itemize}
\item $f|_{\partial W}$ is the identity, 
\item $f^* \s = \s$, 
\item $i(Y)$ separates $W$, 
    \item $f ( i (Y))  = i (Y)$ as subsets of $W$, and 
    \item $f : (i (Y), i_* g) \to (i (Y), i_* g)$ is an isometry.  
\end{itemize}
We also impose that the map 
\[
H^2 ( W; \Q ) \to H^2(Y; \Q) 
\]
is non-zero. Then we have 
\[
 FKM(X, \s,\xi,  f)= \begin{cases} 0 \in \Z \text{ if the action of $f$ on $\Lambda (Y, \s, \xi ) $ is trivial,}  \\
 0 \in \Z_2 \text{ if the action of $f$ on $\Lambda (Y,\s,  \xi ) $ is non-trivial. } \end{cases}  
 \]
 \item[(ii)] Suppose $W$ includes an essentially embedded smooth surface $S$ with non-zero genus that violating adjunction inequality and $[S]^2=0$.
 Then for a homologically trivial diffeomorphism $f$ on $W$ fixing the boundary pointwise such that $f( \partial (\nu (S)) )$ is smoothly isotopic (rel $\partial$) to $\partial (\nu (S))$, we have 
\[
 FKM(X, \s,\xi,  f)= \begin{cases} 0 \in \Z \text{ if the action of $f$ on $\Lambda (Y, \s, \xi ) $ is trivial,}  \\
 0 \in \Z_2 \text{ if the action of $f$ on $\Lambda (Y, \s, \xi ) $ is non-trivial.} \end{cases}
 \]
 \end{itemize}

\end{thm}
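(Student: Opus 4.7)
The plan is to imitate the argument of \cref{Froyshov family}, carrying it over to the families Kronheimer--Mrowka setting. Since $FKM$ is invariant under smooth isotopy (\cref{lem: rel inv under isotopy}), I may first isotope $f$ so that it is the identity in a product neighborhood $N \cong [-1,1] \times Y$ of the embedded hypersurface $i(Y) \subset W$ with $f|_N$ product and isometric with respect to $g$ on the $Y$-factor. Form the mapping torus $E_f$ of $f$ over $S^1$, which contains the trivialized subbundle $E_{f|_{i(Y)}} \cong S^1 \times Y$. The invariant $FKM(W,\fraks,\xi,f)$ is computed by signed/mod-$2$ counting the parameterized moduli space $\bigcup_{t \in [0,1]} \mathcal{M}(W^+, \fraks, g_t, \eta_t)$ for a family $(g_t,\eta_t)$ connecting $(g_0,\eta_0)$ to $(f^\ast g_0, f^\ast \eta_0)$, and agreeing with a fixed product structure on the neck $N$.

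Next I would introduce a neck-stretching parameter $s \in [0,\infty)$, replacing $N$ with $[-s-1,s+1] \times Y$ with the product metric $g + dt^2$, and leaving the metric (and the perturbation) unchanged both outside $N$ and on the conical end $C^+$. For the perturbation, use the hypothesis that $H^2(W;\Q) \to H^2(Y;\Q)$ is nonzero: pick a closed $2$-form $\eta$ on $W$ whose restriction to $Y$ represents a nonzero cohomology class, extend it smoothly to $W^+$ so that it vanishes on the conical end (only the exponentially decaying piece $\eta_b$ of the standard perturbation remains there), and add $\epsilon\eta^+$ to the curvature equation with $\epsilon$ chosen small enough that the $\epsilon\eta|_Y$-perturbed Seiberg--Witten equation on $(Y,g,\fraks|_Y)$ has no solutions (this is possible because $Y$ is a strong L-space and $[\eta|_Y] \neq 0$). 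Now assume for contradiction that $FKM(W,\fraks,\xi,f) \neq 0$. Then for every $s_i \to \infty$ we obtain solutions $(A_i,\Phi_i)$ on $W^+_{s_i}$ at some parameter $t_i \in [0,1]$.

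The key analytical step is a uniform bound on the topological energy of $(A_i,\Phi_i)|_{[-s_i-1,s_i+1]\times Y}$, parallel to \cref{uniform bound of energy}. The Kronheimer--Mrowka a priori bounds on $W^+$ give a uniform bound on the analytical energy integrated over $W^+_{s_i}$ (the conical end contribution is controlled via the standard weighted estimates of \cref{Signed Kronheimer--Mrowka invariants}, since $(A_0,\Phi_0)$ is fixed there and $\eta$ is supported away from the end). The complement of the neck in $W$ is compact and the family of metrics there is compact, so the energy there is uniformly bounded below; subtracting, the neck energy is uniformly bounded above. A standard extraction along $[l_i,l_i+1] \times Y$ with $l_i \to \infty$ yields a subsequence whose restriction converges, in temporal gauge, to a finite-energy translation-invariant solution of the $\epsilon\eta|_Y$-perturbed three-dimensional Seiberg--Witten equations on $(Y,g,\fraks|_Y)$. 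This contradicts the choice of $\epsilon$, proving (i).

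For (ii), I would follow Kronheimer--Mrowka's proof of the Thom conjecture: take $Y := \partial\nu(S) = S^1 \times S$ equipped with a product metric $dt^2 + g_0$ where $g_0$ has constant scalar curvature, and observe via \cite[Proposition 40.1.1]{KM07} that any solution of the unperturbed three-dimensional Seiberg--Witten equation on $(Y,\fraks|_Y)$ forces the adjunction inequality $|\langle c_1(\fraks),[S]\rangle| \le 2g(S)-2$. Since this inequality is violated by hypothesis, $Y$ plays the role of the strong L-space in (i), and the hypothesis that $f(\partial\nu(S))$ is smoothly isotopic (rel $\partial$) to $\partial\nu(S)$ together with isotopy invariance of $FKM$ lets me arrange $f$ to preserve $Y$ and be an isometry there. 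Applying the same neck-stretching and energy argument (with $\eta=0$, since homological triviality of $f$ removes the need for an exact cohomological perturbation) yields the vanishing. The sign/mod-$2$ dichotomy in the statement is handled exactly as in the previous section: if the monodromy on $\Lambda(W,\fraks,\xi)$ is nontrivial the count is only defined over $\Z_2$, otherwise we get the signed statement over $\Z$. The main obstacle I expect is the energy bookkeeping across the conical end, but this is essentially already handled by the weighted Sobolev setup of \cref{Signed Kronheimer--Mrowka invariants} together with the fact that $\eta$ may be chosen compactly supported in $W$.
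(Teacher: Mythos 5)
Your proposal takes essentially the same route as the paper: isotope $f$ to be a product-type isometry near the separating hypersurface, form the mapping torus, stretch the neck, establish a uniform energy bound on the neck region, and extract a finite-energy translation-invariant solution on $\R\times Y$ contradicting the strong L-space (or violated adjunction inequality) hypothesis. The paper's proof is terse: it declares the argument parallel to \cref{Froyshov family}, cites the unparametrized version in \cite{IMT21} (their Theorem 1.19(i)) for the perturbation scheme, and isolates the one new difficulty --- namely \cref{uniform bound of energy} --- deferring it to \cite[Lemma 4.6]{IMT21}.

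The one place where your sketch is too optimistic is exactly the step the paper flags as the sole nontrivial difference from the closed case: the uniform energy bound on the neck in the presence of the conical end. You assert that ``the conical end contribution is controlled via the standard weighted estimates \dots since $(A_0,\Phi_0)$ is fixed there and $\eta$ is supported away from the end,'' but the weighted Sobolev framework of \cref{Signed Kronheimer--Mrowka invariants} gives Fredholm theory, not an a priori bound on the analytical energy over the cone. The actual issue, which the paper states explicitly, is that on a 4-manifold with conical end there is no single global notion of energy: the topological and analytical energies that make the closed-case argument (\cref{uniform bound of energy}) work only control the compact part, and on the cone one must instead work with the symplectic energy and patch the two estimates together. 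This patching is the content of \cite[Lemma 4.6]{IMT21}, and it is what licenses the subtraction step (``complement energy is bounded below, so neck energy is bounded above'') that your sketch performs without justification. You do flag this as the ``main obstacle I expect,'' which is the right instinct, but your proposed resolution (compactly supported $\eta$ plus weighted Sobolev norms) does not by itself close the gap; the missing ingredient is the symplectic-energy argument on the cone. Everything else --- the choice of $\eta$ from nonvanishing of $H^2(W;\Q)\to H^2(Y;\Q)$, taking $\epsilon$ small using the strong L-space hypothesis, part (ii) via adjunction on $S^1\times S$ with no perturbation needed, and the $\Z$ versus $\Z_2$ dichotomy according to the monodromy on $\Lambda(W,\fraks,\xi)$ --- matches the paper.
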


\begin{proof}
The proof is essentially parallel to that of \cref{Froyshov family}
An unparametrized version of \cref{Froyshov family1} is proven in \cite[Theorem 1.19 (i)]{IMT21}. We use the same perturbations used in the proof of \cite[Theorem 1.19 (i)]{IMT21}. 
The only difference between the proofs of \cref{Froyshov family} and \cref{Froyshov family1} appears in the proof of \cref{uniform bound of energy}. Note that when we consider the Seiberg--Witten equation on 4-manifolds with conical end, there is no global notion of energy: on the interior, we have usual topological and analytical energies, and on the cone, we have the symplectic energy. We combine these two energies to show \cref{uniform bound of energy}. 
For that part, see that the proof of \cite[Lemma 4.6]{IMT21}. It is easy to see the proof of \cite[Lemma 4.6]{IMT21} can be also applied to our family case. 
\end{proof}

\subsubsection{Embeddings of surfaces}

We also prove \cref{Froyshov family1} for embedded surfaces to find exotic embeddings of surfaces into 4-manifolds with boundary. 
Let $\Sigma_g$ denote a closed oriented surface of genus $g$.

\begin{thm}
\label{KM family adjunction}
Let $(W, \s)$ be a compact Spin$^c$ 4-manifold with contact boundary $(Y,\xi)$. Let $i: \Sigma_g \to W$ be a smooth embedding satisfying one of the following two conditions: 
\begin{itemize}
    \item[(i)]  
\begin{itemize}
    \item $i_* [\Sigma _g]$ is non-torsion, and 
    \item  $g=0 $. 
\end{itemize}
\item[(ii)]
\begin{itemize}
 \item  $g > 0 $ and  
    \item the adjunction inequality for $(i(\Sigma_g), \s) $ is violated.  
\end{itemize}
\end{itemize}
For any diffeomorphism $f$ on $W$ fixing the boundary pointwise and preserving the Spin$^c$ structure $\s$ such that $i$ is smoothly isotopic (rel $\partial$) to $f\circ i $, we have 
\[
 FKM(X, \s,\xi,  f)= \begin{cases} 0 \in \Z \text{ if the action of $f$ on $\Lambda (Y, \s, \xi ) $ is trivial},  \\
 0 \in \Z_2 \text{ if the action of $f$ on $\Lambda (Y, \s, \xi ) $ is non-trivial}.
 \end{cases}  
 \]
Moreover, in the case (i), we can replace the assumption that $i$ is isotopic to $f \circ i$ with the assumption that the image $i(\Sigma_g)$ is smoothly isotopic to the image of $f^2\circ i(\Sigma_g)$.
\end{thm}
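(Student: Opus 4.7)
The plan is to mirror the proof of \cref{Froyshov family1}, adapted to a codimension-two embedding rather than a codimension-one one. The key reduction is to the case of zero self-intersection: if $[i(\Sigma_g)]^2 = k \neq 0$, I would blow up $W$ at $|k|$ points chosen on $i(\Sigma_g)$, pass to the proper transform $\widetilde{\Sigma}_g$, and invoke the fiberwise blow-up formula \cref{fiberwise blow up KM} to translate the invariant on $(W,\fraks,f)$ to one on the blown-up manifold $\widetilde W$ with the modified $\Spinc$ structure $\widetilde{\fraks}$ and an induced diffeomorphism $\widetilde f$. The adjunction-violation hypothesis in case (ii) and the non-triviality of $[\widetilde\Sigma_g]$ in case (i) are preserved, so it is enough to prove the theorem assuming $[\Sigma_g]^2 = 0$ throughout.

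Next I would arrange, after smooth isotopy within $\Diff(W,\partial)$, that $f$ fixes $\Sigma := i(\Sigma_g)$ setwise and is the identity on a tubular neighborhood $\nu(\Sigma)$. This uses the hypothesis that $i$ is smoothly isotopic to $f\circ i$ rel $\partial$ together with the isotopy extension theorem: an isotopy from $i$ to $f\circ i$ extends to an ambient isotopy $\phi_t$ of $W$ rel $\partial$, and replacing $f$ by $\phi_1^{-1}\circ f$ gives a representative with $f\circ i = i$. A further isotopy supported near $\Sigma$ trivializes $f$ on $\nu(\Sigma)$. Since $FKM$ is invariant under isotopy in $\Diff(W,[\fraks],\partial)$ by \cref{lem: rel inv under isotopy}, we may work with this model.

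With $f$ trivial on $\nu(\Sigma)$ and $[\Sigma]^2=0$, the mapping torus has a product neighborhood of $S^1\times(S^1\times\Sigma_g)$ of neck-type, and I would run the neck-stretching argument of \cref{Froyshov family,Froyshov family1} along $\partial\nu(\Sigma)\cong S^1\times\Sigma_g$. The metric on $S^1\times\Sigma_g$ is taken to be a product $dt^2 + g_{\Sigma_g}$ with $g_{\Sigma_g}$ of constant scalar curvature: in case (i) this is the PSC metric on $S^1\times S^2$ which admits no (perturbed) irreducible Seiberg--Witten solutions, while in case (ii) the same conclusion follows from \cite[Proposition 40.1.1]{KM07} applied with the adjunction-violation hypothesis (together with a non-exact perturbation by a closed 2-form pulled back from $W$ as in the proof of \cref{Froyshov family}). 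If $FKM(W,\fraks,\xi,f)\neq 0$ there is a sequence of parametrized solutions with respect to a stretched family of metrics $g_{t_i,s_i}$; the uniform symplectic-topological energy bound on the contact end (as in \cite[Lemma 4.6]{IMT21}) combined with the standard topological-energy estimate on the interior produces, after passing to a subsequence, an energy-zero limit on a long cylinder in $S^1\times\Sigma_g$, hence (via temporal gauge) a solution on $S^1\times\Sigma_g$, contradicting the choice of metric and perturbation. The $\Z$ versus $\Z_2$ alternative in the conclusion is automatic once we match the hypothesis on the action of $f$ on $\Lambda(W,\fraks,\xi)$ via \cref{lem: diffeo moduli ori}.

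For the final ``moreover'' assertion of case (i): an ambient isotopy of submanifolds $i(S^2)\simeq f^2\circ i(S^2)$ lifts to an isotopy of embeddings $i\simeq f^2\circ i\circ \varphi$ for some $\varphi\in\Diff(S^2)$; since $\varphi^2\in\Diff^+(S^2)$ is smoothly isotopic to $\mathrm{id}_{S^2}$, the map $i$ is in fact isotopic to $f^2\circ i$ as embeddings. Applying the previous three steps to $f^2$ in place of $f$ gives $FKM(W,\fraks,\xi,f^2)=0$, and since the $\Lambda$-action of $f^2$ is trivial, this vanishing lives in $\Z$. By the additivity of \cref{prop: additivity}, $FKM(W,\fraks,\xi,f^2)=2\,FKM(W,\fraks,\xi,f)$ in $\Z$, so $FKM(W,\fraks,\xi,f)=0$, whence also the mod-$2$ reduction vanishes. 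The main obstacle, as in \cref{Froyshov family,Froyshov family1}, is the uniform energy control along the stretched family; this is the only genuinely analytic input and it transfers from the non-family setting of \cite{IMT21} because the base of the family ($S^1$) is compact and we are working with a compact $1$-parameter family of metrics outside the neck.
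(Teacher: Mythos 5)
Your overall strategy---neck-stretching along $\partial\nu(i(\Sigma_g))\cong S^1\times\Sigma_g$, using a constant-scalar-curvature (PSC for $g=0$) metric on the neck to preclude translation-invariant solutions, with the uniform symplectic-energy bound on the conical end from \cite{IMT21} controlling the limit---matches what the paper intends; the paper's own proof is a one-line reference to Baraglia's closed-manifold argument, and your write-up fills in what that reference is pointing to.

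There is, however, a genuine gap in your treatment of the ``moreover'' assertion of case (i). You argue that, from an isotopy of images $i(S^2)\simeq f^2(i(S^2))$, one gets an isotopy of embeddings $i\simeq f^2\circ i\circ\varphi$ for some $\varphi\in\Diff(S^2)$, and then claim ``since $\varphi^2\in\Diff^+(S^2)$ is isotopic to the identity, the map $i$ is in fact isotopic to $f^2\circ i$ as embeddings.'' This is a non sequitur: $\varphi^2\simeq\mathrm{id}$ does not imply $\varphi\simeq\mathrm{id}$. If $\varphi$ is orientation-reversing (which the hypothesis certainly allows), then $\varphi$ represents the non-trivial class in $\pi_0\Diff(S^2)\cong\Z/2$, and there is no isotopy $i\simeq f^2\circ i$ of embeddings. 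What you actually need is weaker: after ambient isotopy $f^2$ fixes $\Sigma$ setwise, and by Smale's theorem ($\Diff(S^2)\simeq O(3)$) one can isotope $f^2$ further so that it acts on $\nu(\Sigma)\cong S^2\times D^2$ by an orthogonal isometry of the round metric. That is exactly the \emph{isometry} version of the boundary condition (compare the hypothesis of \cref{Froyshov family1}(i), which only demands an isometry, not the identity), and the neck-stretching argument still runs, with a family of metrics on the neck that is $f^2$-invariant because $f^2|_{S^1\times S^2}$ is an isometry of the PSC product metric. This yields $FKM(W,\fraks,\xi,f^2)=0\in\Z$, after which your divide-by-two step via \cref{prop: additivity} correctly gives $FKM(W,\fraks,\xi,f)=0$ when the $\Lambda$-action of $f$ is trivial. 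Note also that when the $\Lambda$-action of $f$ is non-trivial, the relation $FKM(f^2)=2\,FKM(f)$ conveys no information about the $\Z_2$-valued $FKM(f)$, so your argument does not establish the stated $\Z_2$ branch of the ``moreover'' case; in the paper's actual application in the proof of \cref{main1} only the $\Z$-valued branch is used.

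A secondary point: your reduction to $[\Sigma]^2=0$ by blowing up at $|k|$ points on $\Sigma$ only works when $k=[\Sigma]^2>0$; if $k<0$ the blow-ups make the self-intersection more negative. For the adjunction case (ii) the violated inequality implicitly forces $[\Sigma]^2\geq 0$, and in the paper's applications $[\Sigma]^2\geq 0$ always holds, but the theorem as stated for case (i) does not assume this, so you should either add that hypothesis or supply a separate argument for negatively self-intersecting spheres.
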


\begin{proof}
The proof is also based on the neck stretching argument near a normal neighborhood of $i(\Sigma_g)$. The closed case is treated in \cite{Ba20}. Since there is no big difference between the proof of \cref{KM family adjunction} and \cite[Theorem 1.2]{Ba20}, we omit the proof. 
\end{proof}


\subsection{A fiberwise connected sum formula}
We first review a fiberwise connected sum formula which is first proven in \cite[Theorem 7.1]{Ko21}. Note that in \cite[Theorem 7.1]{Ko21}, the case that the connected sum along $S^3$ is treated. We generalize the vanishing result in \cite[Theorem 7.1]{Ko21} to the result on the connected sums along any strong L-spaces with $b_1=0$.   In the context of Donaldson's theory, the connected sum result is written in \cite[Theorem 3.3]{Ru99}. 
For a 4-manifold $X$, $X^\circ$ denotes a compact punctured 4-dimensional submanifold of $X$. 
\begin{thm}\label{conn sum vanish family}
Let $(Y,h)$ be a strong L-space with $b_1(Y)=0$.
Let $(X, \s), (X', \s')$ be two compact Spin$^c$ 4-manifolds with $b^+_2> 1$, $\partial X= -Y$,  $\partial X'=Y$, and suppose there is an isomorphism $\s|_{\partial X} =\s'|_{-\partial X'}$.
Let $f$ be an orientation-preserving diffeomorphism on the closed 4-manifold $X^\# :=X \cup_Y X'$ such that 
\begin{itemize}
    \item[(i)] the diffeomorphism $f$ is smoothly isotopic to a connected sum $f' \#_Y g'$ on $X \cup_Y X'$ of  diffeomorphisms $f'$ and $g'$ on $X$ and $X'$ which are, near the boundary of $X$ and $X'$, the product of an isometry of $(Y,h)$ with the identity on $[0,1]$, and 
    \item[(ii)] the diffeomorphism $f$ preserves the Spin$^c$-structure  $\s \# \s'$ on $X^\#$.
\end{itemize}
Then we have 
\[
 FSW(X^\# , \s \# \s', f)= \begin{cases} 0 \in \Z \text{ if $f$ does not flip the homology orientation of $X$,} \\
  0 \in \Z_2 \text{ if $f$ flips the homology orientation of $X$.} \end{cases}
\]

\end{thm}
\begin{proof}
For completeness, we give a sketch of the proof. 
Let $Y$ be a rational homology 3-sphere with a Riemannian metric $h$ such that there is no irreducible solution to the Seiberg--Witten equation on $Y$ with respect to $(\s|_Y, h)$. 
Suppose $X^\circ$  and $(X')^\circ$ are 4-manifolds with boundary $-Y$ and $Y$ and define 
\[
X \# X' = X^\circ \cup _Y  (X')^\circ. 
\]
Since the family Seiberg--Witten invariant $FSW$ is an isotopy invariant, we can assume that $f$ is described as the connected sum $f' \# g'$ on $X \#_Y X'$ of diffeomorphisms $f'$ and $g'$ on $X^\circ$ and $(X')^\circ$ which are the identity near the boundary of $X^\circ$ and $(X')^\circ$. With respect to the Spin$^c$ structure $\s \# \s'$, the gluing theory enables us to construct a diffeomorphism 
\begin{align}\label{gluing diffeo}
\mathcal{PM}(( X^\circ \cup [0, \infty) \times Y ), f')  \times_{U(1)} \mathcal{PM}(( (-\infty, 0] \times Y )\cup ( X' )^\circ , g') \to  \mathcal{PM}(X\# X' , f' \# g' ),  
\end{align}
where the spaces $\mathcal{PM}(X^\circ \cup [0, \infty) \times Y ), f')$ and 
$\mathcal{PM}(( (-\infty, 0] \times Y )\cup( X' )^\circ , g')$ are parametrixed moduli spaces on the cylindrical-end Riemannian 4-manifolds $X^\circ \cup [0, \infty) \times Y$ and $((-\infty, 0] \times Y ) \cup ( X' )^\circ  $ assymptotically the flat redusible solutions. Here we used the product metric $h + dt^2$ on the cylinder part. 
Precisely, we need to use weighted $L^2_k$ norms to obtain Fredholm properties of linearized Seiberg--Witten equations with slice. From index calculations, we have
\[
\dim \mathcal{PM}(X^\circ \cup [0, \infty) \times Y , f') = \dim \mathcal{PM}(X, f')
\]
and 
\[
\dim \mathcal{PM}((-\infty, 0] \times Y  \cup ( X' )^\circ ), g')=  \dim \mathcal{PM}(X', g'). 
\]
So we have 
\[
\dim \mathcal{PM}(X, f') +  \dim \mathcal{PM}(X', g') + 1 = \dim \mathcal{PM}(X \# X',f) . 
\]
Note that $\dim \mathcal{PM}(X \# X',f)$ is $0$ if we assume the family Seiberg--Witten invariant of $(X\# X', \s\#\s',  f)$ is non-zero. (Otherwise, we define $0$ as the family Seiberg--Witten invariant in this paper.  )
Thus, one of $\dim \mathcal{PM}(X, f')$ and $\dim \mathcal{PM}(X', g')$ is negative. This implies one of $\mathcal{PM}(X, f')$ and $\mathcal{PM}(X', g')$ is empty since they are assumed to be regular. This completes the proof. 
\end{proof}

In \cref{conn sum vanish family}, we assumed that $f$ is isotoped into $f'$ so that $f'|_Y$ is an isometry. However, if $Y$ admits a positive scalar curvature metric, then the following stronger result holds: 

\begin{thm}\label{conn sum vanish family:PSC}
Let $Y$ be a rational homology 3-sphere with a positive scalar curvature metric and $b_1(Y)=0$.
Let $(X, \s), (X', \s')$ be two compact Spin$^c$ 4-manifolds with $b^+_2> 1$, $\partial X= -Y$ and $\partial X'=Y$. Suppose $\s|_{\partial X} = \s'|_{-\partial X'}$.
Let $f$ be an orientation-preserving diffeomorphism on the closed 4-manifold $X^\# :=X \cup_Y X'$ such that 
\begin{itemize}
    \item[(i)] the diffeomorphism $f$ is smoothly isotopic to a connected sum $f' \#_Y g'$ on $X \cup_Y X'$ of  diffeomorphisms $f'$ and $g'$ on $X$ and $X'$ so that $f(Y)=Y$,
    \item[(ii)] the diffeomorphism $f$ preserves the Spin$^c$-structure $\s \# \s'$ obtained as the connected sum of $\s$ and $\s'$ along $Y$.
\end{itemize}
Then we have 
\[
 FSW(X^\# , \s \# \s', f)= \begin{cases} 0 \in \Z \text{ if $f$ does not flip the homology orientation of $X$,} \\
  0 \in \Z_2 \text{ if $f$ flips the homology orientation of $X$.} \end{cases}
\]
\end{thm}

\begin{proof}
The proof is similar to that of \cref{conn sum vanish family}. Instead of assuming the isometric property of diffeomorphisms, we use the contractivity of the space of positive scalar curvature metrics proven in \cite{BaKl19}.  Let us explain how to take a fiberwise Riemann metric to obtain a diffeomorphism corresponding to \eqref{gluing diffeo}. Because the family Seiberg--Witten invariant is an isotopy invariant, we can assume that $f$ is a product in a neighborhood $N$ of $Y$ preserving the level of $N=[0,1]\times Y$.
Now, we shall consider the parametrized moduli space
\[
\mathcal{PM} (X, \s, f)   \to S^1,
\]
regarded as the quotient the moduli space over $[0,1]$: 
\[
\bigcup_{t\in [0,1]} \mathcal{M}(X, \s, g_t )  \to [0,1],  
\]
where $g_t$ is a smooth 1-parameter family of metrics such that 
\begin{itemize}
    \item $ g_1 = f^*g_0$, 
    \item for any $t\in [0,1]$, we have $g_t|_{N} = h_t + dt^2$ for a smooth 1-parameter family of metrics $h_t$ on $Y$, 
    \item $h_0$ is a positive scalar curvature metric on $Y$.
\end{itemize} 
By the connectivity of the space of positive scalar curvature metrics proven in \cite{BaKl19}, we can take $h_t$ so that $h_t$ is a positive scalar curvature metric for every $t\in [0,1]$. Under these settings, we obtain a diffeomorphism between moduli spaces of the form \eqref{gluing diffeo}, and the rest of the proof is the same as that of \cref{conn sum vanish family}.
\end{proof}

We next show a vanishing result of the Kronheimer--Mrowka invariant similar to \cref{conn sum vanish family}.  

\begin{thm}\label{conn sum vanish family1}
Let $Y$ be a strong L-space with $b_1(Y)=0$.
Let $(W, \s), (X', \s')$ be Spin$^c$ 4-manifolds with $b^+_2> 1$, $\partial W= -Y \cup Y'$ and $\partial X'=Y$ for an oriented 3-manifold $Y'$.
Suppose $Y'$ is equipped with a contact structure $\xi$ such that $\s_\xi = \s|_{Y'}$ and $\s|_{Y}= \s'|_{Y}$. 
Let $f$ be a diffeomorphism on the closed 4-manifold $W^\# :=W \cup_Y X'$ such that 
\begin{itemize}
    \item[(i)] the diffeomorphism $f$ is smoothly isotopic to $f' \cup g'$ on $W \cup_Y X'$ of  diffeomorphisms $f'$ and $g'$ on $X$ and $X'$ which are isometry (not necessarily identity) near the boundary of  $W$ and $X'$ with respect to metric $h$ on $Y$, and 
    \item[(ii)] the diffeomorphism $f$ preserves the Spin$^c$-structure $\s\# \s'$ on $W^\#$.
\end{itemize}
Then we have 
\[
 FKM(W^\# , \s \# \s', f)= \begin{cases} 0 \in \Z \text{ if $f$ does not flip the elements in $\Lambda(W^\#, \s \# \s', \xi)$,} \\
  0 \in \Z_2 \text{ if $f$ flips the homology orientation of $X$.} \end{cases} 
\]

\end{thm}

\begin{proof}
The proof is completely the same as that of \cref{conn sum vanish family}. 
\end{proof}

\subsection{A fiberwise blow up formula}
We first review a fiberwise blow up formula proven in \cite{BK20} for the family Seiberg--Witten invariant. 
\begin{thm}[\cite{BK20}]
Let $(X, \s), (X', \s')$ be closed Spin$^c$ 4-manifolds with $b^+_2(X)> 1$ and $b^+_2(X')=1$. Let $f$ be an orientation-preserving diffeomorphism of the closed 4-manifold $X \# X'$. Suppose that the formal dimension of the Seiberg--Witten moduli spaces for $(X, \s)$ and $( X' , \s')$ are $0$ and $-2$ respectively.

\begin{itemize}
    \item[(i)] Let $f$ be a homologically trivial diffeomorphism  on $X \# X'$ such that $f$ is smoothly isotopic to a connected sum $f' \# g'$ of $f'= \id_X$ and a homologically trivial diffeomorphism $g'$ of $X'$, which is the identity near boundary.
Then we have 
\[
 FSW(X \# X', \s \# \s', f)= 0 \in \Z . 
\]
\item[(ii)] Let $f$ be a self-diffeomorphism  on $X \# X'$ such that 
\begin{itemize}
\item $f$ preserves the Spin$^c$ structure $\s \# \s'$, 
    \item $f$ is smoothly isotopic to a union $f' \cup g'$ on $X \# X'$ of the identity $f'= \id_X$ of $X$ and a diffeomorphism $g'$ on $X'$ which is the identity near $\partial X'$, and 
    \item $g'$ reverses the homology orientation of $X'$.   
\end{itemize}
Then we have 
\[
 FSW(X \# X' , \s \# \s', f)= SW (X, \s) \in \Z_2, 
\]
where the right-hand side is the mod 2 Seiberg--Witten invariant. 
\end{itemize}
\end{thm}

\begin{rem}
Note that \cite[Theorem 1.1]{BK20} only treats the connected sum along $S^3$, but there is no essential change when we extend their result to the case of the sums along any strong L-space with $b_1=0$. 
\end{rem}

Now, we state a fiberwise blow up formula for the families Kronheimer--Mrowka invariant. 

\begin{thm}\label{fiberwise blow up KM}
 Let $(W, \s), (X, \s')$ be Spin$^c$ 4-manifolds with $b^+_2(X)=1$, $\partial W= - Y$ and $\partial X=\emptyset$ for an oriented 3-manifold $Y$.
Suppose $Y$ is equipped with a contact structure $\xi$ such that $\s_\xi = \s|_{Y}$, $d(W, \s, \xi)=0$ and $d(X,\s')=-2$, where $d(X,\s')$ is the virtual dimension of the Seiberg--Witten moduli space for $(X,\s')$. 
Let $f$ be a diffeomorphism on the 4-manifold $X \# W$ such that $f$ preserves the Spin$^c$ structure $\s \# \s'$ obtained as the connected sum of $\s$ and $\s'$. 
\begin{itemize}
    \item[(i)] Suppose $f$ is smoothly isotopic (rel $\partial$) to a union $f' \cup g'$ on $X \# W$ of the identity $f'=\id_X$ of $X$ and a homologically trivial diffeomorphism $g'$ of $W$ which is the identity near boundary. 
Then we have 
\[
 FKM(X \# W, \s \# \s',\xi,  f)= 0 \in \Z . 
\]
\item[(ii)] Suppose 
 $f$ is smoothly isotopic to a union $f' \cup g'$  of the identity $f'=  \id_X$ of $X$ and a diffeomorphism $g'$ of $W$ which is the identity near boundary and $g'$ {\it reverses} the homology orientation of $X$.   

Then we have 
\[
 FKM(X \# W, \s \# \s', \xi, f)= \frakm(W, \s, \xi) \in \Z_2. 
\]
\end{itemize}

\end{thm}
\begin{proof}
The proof is essentially the same as that for a similar gluing formula for the families Seiberg--Witten invariant of families of closed 4-manifolds \cite[Theorem 1.1]{BK20}, and we omit the proof.
\end{proof}

\begin{rem}
We also remark that \cref{fiberwise blow up KM} can easily be generalized to the case of the connected sum along a strong L-space. 
\end{rem}

As a special case, we give a certain fiberwise connected sum formula for a certain class of 4-manifolds parametrized by $S^1$.


Let $W$ be an oriented compact smooth 4-manifold with contact boundary $(Y,\xi)$ and with $b^+_2>1$.
Let $\fraks$ be a $\spc$ structure on $W$ of formal dimension $0$.
Let $N, \frakt, f_N$ be as in \cref{section: Construction of diffeomorphisms and non-vanishing results}.
Let us consider the manifold $W \# N$ obtained as the connected sum along $D^4$ in $N$ and the diffeomorphism $\id_W \# f_N$. 

\begin{prop}
\label{gluing}
Under the above notation, one has 
\[
FKM(W \# N, \fraks\#\frakt,\xi, \id \# f_N)  =  \frakm(W,\fraks, \xi)\in \Z_2
\]
\end{prop}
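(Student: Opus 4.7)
My plan is to establish this identity via a neck-stretching and gluing analysis along the connect-sum $S^3$ in $W\#N$. Indeed, the statement is exactly the special case of the fiberwise blow-up formula \cref{fiberwise blow up KM}(ii) obtained by taking the closed $b_2^+=1$ Spin$^c$ piece to be $(N,\frakt)$ and the homology-orientation-reversing diffeomorphism on that piece to be $f_N$. The numerical hypotheses are routine: $b_2^+(N)=1$ by construction, a direct computation from $c_1(\frakt)^2=-1$ together with $2\chi(N)+3\sigma(N)=7$ gives $d(N,\frakt)=-2$, and $f_N$ was constructed in \cite{K19} precisely so as to reverse the orientation of the one-dimensional $H^+(N;\R)$.

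To spell out the mechanism, I would first isotope $\id_W \# f_N$ to be the identity on a neighborhood of the separating $S^3$, and then build a family of metrics on $W\#N$ parametrized by $(t,T) \in S^1 \times [0,\infty)$ in which the neck $[-T,T]\times S^3$ is stretched as $T \to \infty$, while on the $W$- and $N$-sides we use respectively a constant family of cone-like metrics and the mapping-torus metrics associated with $f_N$. Standard compactness for the Seiberg--Witten equations with a cone-like end, combined with gluing of cylindrical-end solutions asymptotic to the flat $U(1)$-reducible on $S^3$, yields, for $T$ large, an identification
\[
\mathcal{PM}(W\#N,\fraks\#\frakt,\xi,\id_W\#f_N) \;\cong\; \mathcal{M}(W^+,\fraks,\xi) \,\times_{U(1)}\, \mathcal{PM}(N^\circ_{\mathrm{cyl}},\frakt,f_N),
\]
where $N^\circ_{\mathrm{cyl}}$ is the punctured $N$ equipped with its cylindrical end into $S^3$.

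The $W$-side factor is the ordinary Kronheimer--Mrowka moduli space, whose mod-$2$ count is $\frakm(W,\fraks,\xi)$; and the $N$-side factor is a wall-crossing count that equals $1$ modulo $2$, because $f_N$ acts by $-1$ on the one-dimensional $H^+(N;\R)$, so the induced $S^1$-family of period points crosses the reducible wall an odd number of times. Multiplying through the free $U(1)$-fiber product then gives the stated formula. The principal obstacle will be the wall-crossing analysis on the non-compact $N^\circ_{\mathrm{cyl}}$: one must check that the exceptional reducible produced at the wall, after the $U(1)$-quotient implicit in the gluing, contributes exactly one solution modulo $2$, and that no spurious reducibles escape down the cylindrical end. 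This step runs in parallel with the closed-manifold argument of \cite{BK20}, the only modification being the replacement of the closed $b_2^+>1$ side by Kronheimer--Mrowka's cone-like-end set-up on $W$, which is routine.
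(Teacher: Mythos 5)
Your proposal is correct and takes essentially the same route as the paper: the paper's proof of \cref{gluing} is literally the one-liner ``This is a corollary of \cref{fiberwise blow up KM},'' applying part (ii) of that theorem with the closed $b_2^+=1$ piece $(N,\frakt)$ and the homology-orientation-reversing $f_N$, exactly as you do. Your neck-stretching/gluing/wall-crossing sketch is what the paper defers to the analogous closed-manifold argument in \cite{BK20} when establishing \cref{fiberwise blow up KM} itself, so you are simply unpacking the cited lemma rather than diverging from it; the dimension and $b_2^+$ checks you record are correct.
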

\begin{proof}
This is a corollary of \cref{fiberwise blow up KM}. 
\end{proof}

\section{Construction of diffeomorphisms and non-vanishing results}
\label{section: Construction of diffeomorphisms and non-vanishing results}

In this section, we construct an ingredient of desired exotic diffeomorphisms in our main theorems. 

First, we describe the setting we work on.
Set
\[
N:=\C P^2\# 2(-\C P^2)=\C P^2\# (-\C P^2_1)\# (-\C P^2_2).
\]
Let $\mathfrak{t}$ be a $\Spinc$ structure on $N$ such that each component of 
\[
c_1(\mathfrak{t})\in H^2(N)= H^2(\C P^2)\oplus H^2(-\C P^2_1)\oplus H^2(-\C P^2_2)
\]
is a generator of $H^2(\C P^2),  H^2(-\C P^2_1)$, and $H^2(-\C P^2_2)$.
Let $H, E_1, E_2$ be the generators of $H^2(\C P^2),  H^2(-\C P^2_1), H^2(-\C P^2_2)$, namely
\[
c_1(\mathfrak{t}) = H + E_1 +E_2.
\]
By abuse of notation, let $H, E_1, E_2$ denote also representing spheres of the Poincar{\' e} duals of these classes.
A diffeomorphism $f_N : N \to N$ satisfying the following properties is constructed in \cite[Proof of Theorem~3.2]{K19} (where the diffeomorphism is denoted by $f_0'$): 
\begin{itemize}
    \item $f_N$ fixes the isomorphism class of $\mathfrak{t}$, and 
    \item $f_N$ reverses orientation of $H^+(N)$. 
\end{itemize}
By isotopy, we may suppose also that $f_N$ fixes a 4-dimensional small disk $D^4$ in $N$.

We consider simply connected compact oriented 4-manifolds $W$ and $W'$ with common contact boundary $(Y,\xi)$.
Assume also that we have a diffeomorphism
    \[
    \psi : W\#N \to W' \# N
    \]
that satisfies the following:
If we decompose $H^2(W' \# N;\Z)$ into 
\[
H^2(W' \# N;\Z)
= H^2(W';\Z) \oplus H^2(N;\Z),
\]
the induced action $\psi^\ast : H^2(W' \# N;\Z) \to H^2(W' \# N;\Z)$ on $a+b \in H^2(W';\Z) \oplus H^2(N;\Z)$ is of the form 
\begin{align}
\label{eq: pull back spinc cal}
\psi^\ast(a+b) = h'(a)+b,
\end{align}
where
\[
h' : H^2(W';\Z) \to H^2(W';\Z)
\]
is an isomorphism.

Let $\fraks$ be a Spin$^c$ structure on $W$, and let $\fraks'$ be the Spin$^c$ structure on $W'$ determined by  $h'(c_1(\fraks')) = c_1(\fraks)$.
Then it follows from \eqref{eq: pull back spinc cal} that
\begin{align}
\label{eq: pull back spinc cal2}
(\psi^{-1})^\ast(c_1(\fraks)+c_1(\frakt))
= c_1(\fraks')+c_1(\frakt).
\end{align}

Define a self-diffeomorphism $f$ of $W\# N$ by 
\begin{align}\label{main diffeo f}
f :=  (\id_W \# f_N) \circ \psi^{-1} \circ  (\id_{W'} \# f_N^{-1})  \circ  \psi .
\end{align}
Note that $f$ is the identity on the boundary, while $\psi$ might not be.
Note also that $f$ acts trivially on the (co)homology of $W\#N$.
Indeed, we obtain from \eqref{eq: pull back spinc cal} that
\begin{align*}
    f^\ast 
    &= \psi^\ast \circ \diag(\id_{H^2(W')}, (f_N^{-1})^\ast)
    \circ (\psi^{-1})^\ast
    \circ \diag(\id_{H^2(W)}, f_N^\ast)\\
    &=\diag(h' \circ h'^{-1}, (f_N^{-1})^\ast \circ f_N^\ast) = \id.
\end{align*}

\begin{prop}
 \label{comp+gluing}
 Suppose that
 \[
 (\mathfrak{m}(W,\fraks,\xi),\mathfrak{m}(W',\fraks',\psi_*\xi) ) \equiv (1,0) \text{ or } (0,1) \in \Z_2 \times \Z_2.
 \]
Then we have 
 \[
 FKM( W\# N , \fraks\#\frakt, \xi, f) \equiv 1 \in \Z_2
 \]
for the above diffeomorphism $f$. 
 \end{prop}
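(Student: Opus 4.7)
The strategy is to combine the additivity formula (\cref{prop: additivity}), naturality of $FKM$ under diffeomorphism of pairs, and a connected-sum style gluing formula (of the type to be established in \cref{sectionSeveral vanishing results}) to reduce the claim to the hypothesis on $(\mathfrak{m}(W,\fraks,\xi),\mathfrak{m}(W',\fraks',\psi_*\xi))$ together with the known non-vanishing of the families Seiberg--Witten invariant $FSW(N,\frakt,f_N)\pmod 2$.

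First, I would decompose. Writing
\[
f = (\id_W \# f_N) \circ \bigl(\psi^{-1} \circ (\id_{W'} \# f_N^{-1}) \circ \psi\bigr)
\]
and applying \cref{prop: additivity},
\[
FKM(W\#N, \fraks\#\frakt, \xi, f) \equiv FKM(W\#N, \id_W \# f_N) + FKM\bigl(W\#N, \psi^{-1}\circ(\id_{W'}\#f_N^{-1})\circ\psi\bigr) \pmod 2.
\]
By \cref{independence of most general inv} the invariant depends only on the isomorphism class of the associated mapping torus bundle, so the conjugated term equals $FKM(W'\#N, \fraks'\#\frakt, \psi_\ast\xi, \id_{W'}\#f_N^{-1})$; since $FKM(g^{-1})\equiv FKM(g)\pmod 2$ by \cref{prop: additivity} and \cref{lem: identification then trivial}, this yields
\[
FKM(W\#N, f) \equiv FKM(W\#N, \id_W \# f_N) + FKM(W'\#N, \id_{W'} \# f_N) \pmod 2.
\]
Here the identification of the $\Spinc$ structures uses the hypothesis $(\psi^{-1})^\ast = \diag(h'^{-1}, \id_{H^2(N)})$, which ensures $\psi^\ast(\fraks'\#\frakt) = \fraks\#\frakt$.

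Second, I would apply a families gluing formula for the connected sum of a contact-boundary piece and a closed piece carrying a diffeomorphism, of the form
\[
FKM(W\#N, \fraks\#\frakt, \xi, \id_W\#f_N) \equiv \mathfrak{m}(W, \fraks, \xi)\cdot FSW(N, \frakt, f_N) \pmod 2,
\]
proved by neck-stretching: as the length of the separating $S^3$ neck tends to infinity, any solution to the parametrized equations on $W\#N$ decouples into an unparametrized Kronheimer--Mrowka solution on $W^+$ and a $1$-parameter Seiberg--Witten solution on $N$. The second factor satisfies $FSW(N,\frakt,f_N)\equiv 1\pmod 2$ by the computation in \cite{Rub98, K19}, where $f_N$ is built precisely so as to detect a non-trivial parametrized moduli point on $N$. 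Applying this formula to both terms gives
\[
FKM(W\#N, f) \equiv \bigl[\mathfrak{m}(W,\fraks,\xi)+\mathfrak{m}(W',\fraks',\psi_\ast\xi)\bigr]\cdot FSW(N,\frakt,f_N) \pmod 2,
\]
and under either branch of the hypothesis the bracket equals $1$, giving the claim.

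The main obstacle is establishing the families connected-sum gluing formula used in the second step. While the closed-manifold analogue for $FSW$ is classical, one must here simultaneously run a neck-stretching argument on the $S^3$ dividing $W$ from $N$ and control the weighted Sobolev set-up on the conical Kronheimer--Mrowka end attached at $Y$, and verify that no broken or reducible trajectory appears in the limit across the family. Modulo this gluing input (which is the kind of result advertised for \cref{sectionSeveral vanishing results}), the proposition reduces to a clean assembly of additivity, naturality, and a known non-vanishing in the closed case.
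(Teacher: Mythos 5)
Your overall strategy — split $f$ by the additivity formula (\cref{prop: additivity}), use naturality of $FKM$ under the bundle isomorphism induced by $\psi$ (\cref{independence of most general inv}), and then apply a connected-sum gluing formula to each summand — is exactly the route the paper takes, and the first two steps match the paper's proof line by line. The one place where you deviate is the form of the gluing input. You write
\[
FKM(W\#N, \fraks\#\frakt, \xi, \id_W\#f_N) \equiv \mathfrak{m}(W, \fraks, \xi)\cdot FSW(N, \frakt, f_N) \pmod 2,
\]
and then argue $FSW(N,\frakt,f_N)\equiv 1$. This phrasing is problematic: $N=\C P^2\#2(-\C P^2)$ has $b_2^+=1$, so $FSW(N,\frakt,f_N)$ is not a well-defined families Seiberg--Witten invariant in Ruberman's sense (which requires $b_2^+>2$; moreover for a $b_2^+=1$ piece reducibles and chamber/wall-crossing contributions are in play). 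The paper instead proves a direct gluing identity, \cref{gluing}, a corollary of \cref{fiberwise blow up KM} (case (ii), applicable because $f_N$ reverses the orientation of $H^+(N)$), which states without any extra factor that
\[
FKM(W\#N, \fraks\#\frakt,\xi, \id_W\#f_N) = \frakm(W,\fraks,\xi) \in \Z_2.
\]
The single wall-crossing point on the $N$ side that you morally invoke as ``$FSW(N,\frakt,f_N)=1$'' is built into that formula rather than appearing as a separate invariant. Also, your detour converting $f_N^{-1}$ to $f_N$ via $FKM(g^{-1})\equiv FKM(g)$ is correct but unnecessary: $f_N^{-1}$ also reverses the homology orientation, so the same gluing formula applies directly to the second summand, which is what the paper does. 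With the gluing formula stated in the paper's form, your assembly of the argument is correct, and you rightly identify the gluing formula as the genuine content that must be supplied.
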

 
\begin{proof}
It follows from a combination of the gluing formula \cref{gluing}, the additivity formula \cref{prop: additivity}, and \eqref{eq: pull back spinc cal2} that
\begin{align*}
\begin{split}
&FKM(W\# N,\fraks\#\frakt, \xi, f) \\
=& FKM(W\# N,\fraks\#\frakt, \xi, \id_{W}\#f_N) 
+ FKM(W\# N,\fraks\#\frakt, \xi, \psi^{-1} \circ (\id_{W'}\#f_N^{-1}) \circ \psi)\\
=& FKM(W\# N,\fraks\#\frakt, \xi, \id_{W}\#f_N)
+ FKM(W'\# N,\fraks'\#\frakt, \psi_\ast\xi, \id_{W'}\#f_N^{-1})\\
=& \frakm(W,\fraks,\xi) + \frakm(W',\fraks',\psi_\ast\xi)\\
=&0+1 =1 \text{\ in\ } \Z_2.
\end{split}
\end{align*}
 This completes the proof. 
\end{proof}

\begin{cor}
\label{cor: Ab Z summand}
For every non-zero integer $n \in \Z$, we have
\begin{align}
\label{eq: nonvan for any power}
FKM( W\# N , \fraks\#\frakt, \xi, f^n) \neq 
0 \in \Z.
\end{align}
Moreover, the mapping class of $f$ above generates a $\Z$-summand of the abelianization of
\[
\Ker(\pi_0(\Diff(W\#N, \del) \to \Aut(H_2(W\#N;\Z))).
\]
\end{cor}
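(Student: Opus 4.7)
The plan is to upgrade the mod-$2$ calculation of \cref{comp+gluing} to a $\Z$-valued statement by passing to the signed version of $FKM$, and then exploit the homomorphism structure of that signed invariant on homologically trivial mapping classes. First I would verify that $f\in \Diff_H(W\#N,[\fraks\#\frakt],\del)$: the paragraph preceding \cref{comp+gluing} shows $f^{\ast}=\id$ on $H^{\ast}(W\#N)$, and each of the four factors in \eqref{main diffeo f} preserves the isomorphism class of the relevant Spin$^c$ structure by \eqref{eq: pull back spinc cal2} together with the fact that $f_N$ preserves $[\frakt]$. Hence $f^n\in \Diff_H(W\#N,[\fraks\#\frakt],\del)$ for every $n\in\Z$.

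Second, I would fix an element of $\Lambda(W\#N,\fraks\#\frakt,\xi)$ and invoke \cref{cor: summary of properties of FKM} to view the signed invariant as a group homomorphism
\[
FKM(W\#N,\fraks\#\frakt,\xi,\bullet):\pi_0(\Diff_H(W\#N,[\fraks\#\frakt],\del))\to\Z.
\]
Since the $\Z_2$-valued invariant is the mod-$2$ reduction of the signed count of the same parametrized moduli space, \cref{comp+gluing} forces the signed integer $FKM(W\#N,\fraks\#\frakt,\xi,f)$ to be odd, in particular nonzero. By the homomorphism property,
\[
FKM(W\#N,\fraks\#\frakt,\xi,f^n)=n\cdot FKM(W\#N,\fraks\#\frakt,\xi,f)\ne 0
\]
for every nonzero $n\in\Z$, proving \eqref{eq: nonvan for any power}.

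For the $\Z$-summand claim, set $K:=\Ker(\pi_0(\Diff(W\#N,\del))\to\Aut(H_2(W\#N;\Z)))$. Any class in $K$ acts trivially on $H^{2}(W\#N;\Z)$ and therefore preserves every isomorphism class of Spin$^c$ structure, giving an inclusion $K\hookrightarrow\pi_0(\Diff_H(W\#N,[\fraks\#\frakt],\del))$. Restricting $FKM$ and factoring through the abelianization (possible since $\Z$ is abelian), one obtains a homomorphism $\overline{FKM}:K^{ab}\to\Z$ whose image contains the odd integer $FKM(\cdot,f)$. Viewing $\overline{FKM}$ as a surjection onto its image, which is infinite cyclic, the short exact sequence splits and exhibits an infinite cyclic direct summand of $K^{ab}$; the class $[f]$ projects to the nonzero element $FKM(\cdot,f)$ in this summand, so it has infinite order and determines the claimed $\Z$-summand.

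The main step requiring care, rather than genuine new ideas, is bookkeeping around the $\Z_2$-valued versus $\Z$-valued formalism: one must confirm that the signed $FKM$ has the same parity as the unsigned $FKM$ appearing in \cref{comp+gluing}, and that the various lifts to $\Aut((W\#N,\fraks\#\frakt),\del)$ used implicitly in \eqref{main diffeo f} act compatibly on the chosen element of $\Lambda(W\#N,\fraks\#\frakt,\xi)$, so that the homomorphism property of \cref{cor: summary of properties of FKM} really does produce the identity $FKM(\cdot,f^n)=n\cdot FKM(\cdot,f)$ for all $n$. Once this is in place, the remainder is routine algebra.
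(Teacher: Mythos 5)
Your reconstruction of the first assertion \eqref{eq: nonvan for any power} matches the paper's intended (one-line) proof: fix an element of $\Lambda(W\#N,\fraks\#\frakt,\xi)$, note that the $\Z_2$-valued count is the mod-$2$ reduction of the signed count of the same parametrized moduli space, so \cref{comp+gluing} forces the $\Z$-valued $FKM(W\#N,\fraks\#\frakt,\xi,f)$ to be odd and hence nonzero, and then invoke the homomorphism property of \cref{cor: summary of properties of FKM} on $\pi_0(\Diff_H)$ to get $FKM(\cdot,f^n)=n\cdot FKM(\cdot,f)\neq 0$. The preliminary check that $f^n\in\Diff_H(W\#N,[\fraks\#\frakt],\del)$ is also the right thing to verify.

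The argument for the $\Z$-summand statement, however, has a genuine gap. Knowing only that $FKM(W\#N,\fraks\#\frakt,\xi,f)$ is an odd integer $d$ tells you that $[f]$ has infinite order in the abelianization $K^{ab}$ and that $K^{ab}$ admits \emph{some} infinite cyclic direct summand (since $\overline{FKM}$ maps onto a nonzero subgroup of $\Z$, which is free). But it does not tell you that the subgroup $\langle[f]\rangle$ is itself a direct summand. For $[f]$ to generate a $\Z$-summand you need a homomorphism $K^{ab}\to\Z$ sending $[f]$ to $\pm 1$; your $\overline{FKM}$ sends $[f]$ to $d$, and if $|d|>1$ nothing you have written rules out, say, $K^{ab}\cong\Z$ with $[f]$ equal to $d$ times the generator, in which case $\langle[f]\rangle=d\Z$ is not a summand. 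To close the gap you must establish $FKM(W\#N,\fraks\#\frakt,\xi,f)=\pm 1$ on the nose, not merely odd. This does not follow from \cref{comp+gluing} as stated, because that proposition is intrinsically a $\Z_2$ computation: the two factors $\id_W\#f_N$ and $\psi^{-1}\circ(\id_{W'}\#f_N^{-1})\circ\psi$ are not homologically trivial (each reverses the homology orientation via $f_N$), so the $\Z$-valued additivity of \cref{prop: additivity} does not apply to them, and the gluing formula \cref{fiberwise blow up KM}(ii) is likewise only stated mod $2$. You would need a signed refinement of the gluing input (e.g.\ exact $\Z$-values $\frakm(W)=0$, $\frakm(W')=\pm1$ together with a $\Z$-valued version of the blow-up/gluing step) to justify the ``generates a $\Z$-summand'' conclusion.
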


\begin{proof}
This follows from \cref{comp+gluing} and
\cref{cor: summary of properties of FKM}.
\end{proof}

We can modify the above argument for the generalized families Kronheimer--Mrowka invariant with a loop in $\Xi^{\mathrm{cont}}(\Xi(Y))$.

\begin{lem}
\label{lem: vanishing for the ab diff}
Let $W$ be a compact oriented 4-manifold and with boundary $Y=\partial W$.
Let $\xi$ be a contact structure on $Y$.
Let $\Sigma$ be an embedded 2-sphere in the interior of  $W$ whose self-intersection is non-negative and whose homology class is non-torsion.
Let $N=\C P^2 \# 2(-\C P^2)$, $\s \in \Spinc(W\# N, \xi)$, and set $\frakt := \fraks|_N$. 
Let $f_0$ be a self-diffeomorphism of $N$ which preserves $\frakt$.
Then, for every $\gamma \in \pi_1(\Xi^{\mathrm{cont}}(Y))$, we have
\[
FKM(W \# N, [\s], \xi, \id_{W}\# f_0,\gamma)=0
\]
in $\Z_2$.
\end{lem}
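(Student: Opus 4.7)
My plan is to derive the vanishing directly from a families version of the Seiberg--Witten adjunction inequality applied to $\Sigma$.

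The key observation is that because $\id_{W}\# f_0$ is the identity on the factor containing $\Sigma$, the sphere $\Sigma$ is fixed pointwise by the monodromy of the mapping torus $E\to S^1$, and thus extends to a globally embedded product family $\Sigma\times S^1\hookrightarrow E$. Each fiber of this family is the same sphere $\Sigma$ with $[\Sigma]^2\geq 0$ and non-torsion homology class in $W\# N$. The families parameter $\gamma$ acts only near $\partial(W\# N)=Y$ through the varying contact structure, so it does not interact with $\Sigma$, which lies in the interior of $W$ away from the cone-like end.

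I would then apply a family adjunction inequality in the Kronheimer--Mrowka setup (to be proven in Section \ref{sectionSeveral vanishing results}, extending Baraglia's family adjunction inequality \cite{Ba20} to 4-manifolds with cone-like end and a varying contact boundary): if the parametrized Kronheimer--Mrowka moduli space is non-empty for a family-regular perturbation, then any globally embedded family of closed genus-$g$ surfaces with non-torsion fiberwise homology and non-negative self-intersection must satisfy
\[
|\langle c_1(\s),[\Sigma]\rangle|+[\Sigma]^2\leq 2g-2.
\]
For a sphere, the right-hand side equals $-2$ while the left-hand side is non-negative, a contradiction. Hence the parametrized moduli space is empty and $FKM(W\# N,[\s],\xi,\id_{W}\# f_0,\gamma)=0$.

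The main technical obstacle is establishing the family adjunction inequality in our setting. I would approach it by the standard blow-up method: blowing up the product family $\Sigma\times S^1$ inside $E$ yields a new family together with a families blow-up formula relating the two parametrized Kronheimer--Mrowka invariants. Iterating blow-ups to reduce to the case $[\Sigma]^2=0$ and tracking the modification of $c_1(\s)$ by the exceptional classes forces the formal dimension of the parametrized moduli space to be strictly negative, which forces the invariant to vanish. Since $\Sigma$ lies in the interior of $W$ away from the cone-like end, this argument proceeds essentially as in the closed case treated by Baraglia; the contact structure on $Y$, together with the loop $\gamma$ in $\Xi^{\mathrm{cont}}(Y)$, only affects the construction at infinity and plays no role in the blow-up argument.
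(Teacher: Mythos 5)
Your high-level strategy — reduce to the vanishing coming from the sphere $\Sigma$, deferring to a family adjunction-type theorem — matches the paper's proof, which cites the blow-up formula (\cref{nonfamily KM blowup}, \cref{fiberwise blow up KM}) to reduce to $[\Sigma]^2 = 0$ and then invokes a neck-stretching argument as in \cref{Froyshov family1}. However, the technical sketch you offer for why the deferred theorem holds contains a genuine gap.

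Specifically, you claim that after iterating blow-ups to reduce to $[\Sigma]^2 = 0$, \emph{tracking the modification of $c_1(\s)$ by the exceptional classes forces the formal dimension of the parametrized moduli space to be strictly negative}. This is not the case. The blow-up formula pairs each blow-up with a shift of the $\Spinc$ structure by the exceptional class (i.e.\ $c_1(\hat\s) = c_1(\s) \pm E$) precisely so that the formal dimension is preserved; the invariant on the blow-up is related to the original by a nonzero factor ($2\cosh(\lambda)$ in \cref{nonfamily KM blowup}), not killed by a dimension drop. So after reducing to $[\Sigma]^2 = 0$, you are looking at a parametrized moduli space of the \emph{same} formal dimension as before, and no dimension count will make it vanish. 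What actually produces the vanishing in the genus-zero case is a geometric input you have omitted: the boundary of a tubular neighborhood of a sphere with trivial normal bundle is $S^1 \times S^2$, which carries a metric of positive scalar curvature (compatibly with the monodromy, since $\id_W \# f_0$ fixes $\Sigma$ pointwise, which is the fact you correctly observed). A neck-stretching argument along this $S^1 \times S^2$, as in \cref{Froyshov family1}, then shows that the family moduli space is empty for a suitable sequence of metrics because $S^1 \times S^2$ admits no irreducible Seiberg--Witten solution. This is not a consequence of the genus-$g$ adjunction inequality specialized to $g=0$; it is a separate argument (the Fintushel--Stern sphere vanishing theorem), whose family version is exactly what \cref{KM family adjunction}(i) records. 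To repair your proposal, replace the dimension argument with the neck-stretching step along $S^1 \times S^2$ after the reduction to $[\Sigma]^2 = 0$; your observation that the cone-like end and the loop $\gamma$ only enter at infinity remains a useful remark, since the stretching region sits in the interior away from the boundary data.
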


\begin{proof}
Let us first consider the case with $[\Sigma]^2=0$. General case can be reduced to this case by standard argument using the blow-up formula \cref{nonfamily KM blowup} and the connected sum formula \cref{fiberwise blow up KM}. 
Neck stretching argument along the boundary $S^1\times S^2$ of a tubular neighbourhood of $\Sigma$, as in \cref{Froyshov family1}, gives the conclusion.
\end{proof}

\begin{prop}
 \label{comp+gluing with gamma}
Let $W$ be a compact oriented 4-manifold and with boundary $Y=\partial W$.
Let $\xi$ be a contact structure on $Y$.
Let $\Sigma$ be an embedded 2-sphere in the interior of  $W$ whose self-intersection is non-negative  and whose homology class is non-torsion.
Suppose that
\[
\mathfrak{m}(W',\fraks',\psi_*\xi) \equiv 1 \in \Z_2.
\]
Then, for the above diffeomorphism $f$ \cref{main diffeo f}, we have 
\begin{align}
\label{eq: intermediate FKMnonzero}
FKM( W\# N , \fraks\#\frakt, \xi, f, \gamma)
\neq 0 \in \Z_2
\end{align}
for every $\gamma \in \pi_1(\Xi^{\mathrm{cont}}(Y))$,
and
\begin{align}
\label{eq: distinct FKM}
FKM( W\# N , \fraks\#\frakt, \xi, f^{2n}, \gamma^2)
\neq FKM( W\# N , \fraks\#\frakt, \xi, f^{2n'}, \gamma^2) \in \Z
\end{align}
for every $\gamma \in \pi_1(\Xi^{\mathrm{cont}}(Y))$ and every distinct $n, n' \in \Z$.
\end{prop}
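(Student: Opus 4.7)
The plan is to adapt the strategy of \cref{comp+gluing} to the $\gamma$-refined setting to prove \eqref{eq: intermediate FKMnonzero}, and then to reduce \eqref{eq: distinct FKM} to the same mod-$2$ computation by invoking the $\Z$-valued form of \cref{lem: ab diff: additive}. The three key ingredients will be \cref{lem: vanishing for the ab diff}, the conjugation invariance of $FKM$, and the non-family gluing formula already exploited in the proof of \cref{comp+gluing}.

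For \eqref{eq: intermediate FKMnonzero}, I would set $B := \psi^{-1} \circ (\id_{W'} \# f_N^{-1}) \circ \psi$ so that $f = (\id_W \# f_N) \circ B$, and apply \cref{lem: ab diff: additive} placing $\gamma$ on the first factor:
\[
FKM(W \# N, \fraks \# \frakt, \xi, f, \gamma)
= FKM(W \# N, \fraks \# \frakt, \xi, \id_W \# f_N, \gamma) + FKM(W \# N, \fraks \# \frakt, \xi, B).
\]
The first summand vanishes in $\Z_2$ by \cref{lem: vanishing for the ab diff}, which is applicable because $\Sigma \subset W$ is the required embedded sphere and $f_N$ preserves $[\frakt]$. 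For the second summand, the conjugation invariance followed by the non-family gluing formula, exactly as in the proof of \cref{comp+gluing}, would give
\[
FKM(W \# N, \fraks \# \frakt, \xi, B)
= FKM(W' \# N, \fraks' \# \frakt, \psi_\ast \xi, \id_{W'} \# f_N^{-1})
= \frakm(W', \fraks', \psi_\ast \xi) \equiv 1 \pmod{2},
\]
establishing \eqref{eq: intermediate FKMnonzero}.

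For \eqref{eq: distinct FKM}, I first observe that $f$ is homologically trivial (the computation appears just before \cref{comp+gluing}) and $\gamma^2$ induces trivial monodromy on $\Lambda(W \# N, \fraks \# \frakt, \xi)$, so by \cref{lem: diffeo moduli ori} the $\Z$-valued invariants $FKM(f^{2m}, \gamma^2)$ and $FKM(f^{2m})$ are well defined for every $m$. I then apply the $\Z$-valued form of \cref{lem: ab diff: additive} to the decomposition $f^{2n} = f^{2(n-n')} \circ f^{2n'}$ with $\gamma^2$ on the second factor, which yields
\[
FKM(f^{2n}, \gamma^2) - FKM(f^{2n'}, \gamma^2) = FKM(f^{2(n-n')}) = 2(n-n')\, FKM(f),
\]
where the final equality iterates the $\Z$-valued \cref{prop: additivity} (extended to negative exponents via $FKM(f^{-1}) = -FKM(f)$). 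It therefore suffices to show $FKM(f) \neq 0 \in \Z$. Since $FKM(f) = FKM(f, \gamma_\mathrm{const})$, running the argument for \eqref{eq: intermediate FKMnonzero} with $\gamma$ replaced by the constant loop (where \cref{lem: vanishing for the ab diff} still provides $FKM(\id_W \# f_N) \equiv 0 \pmod 2$) yields $FKM(f) \equiv \frakm(W', \fraks', \psi_\ast \xi) \equiv 1 \pmod 2$, so $FKM(f)$ is an odd integer and in particular nonzero. Hence $2(n-n')\, FKM(f) \neq 0$ for all distinct $n, n'$, proving \eqref{eq: distinct FKM}.

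The main subtlety I anticipate is the bookkeeping of monodromies on $\Lambda$: the $\Z$-valued additivity demands that each of $f^{2n'}$, $f^{2(n-n')}$, and the loop $\gamma^2$ act trivially on $\Lambda$, which ultimately follows from the homological triviality of $f$ (via \cref{lem: diffeo moduli ori}) together with the squaring of $\gamma$. The bridging observation that makes the approach work is that $FKM(f) \bmod 2$ is computed by precisely the same decomposition as $FKM(f, \gamma) \bmod 2$, so the $\Z_2$-computation in the first step automatically supplies the nonvanishing of the $\Z$-valued $FKM(f)$ needed to distinguish the powers in the second.
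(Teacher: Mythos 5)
Your proof is correct and takes essentially the same approach as the paper: decompose $f = (\id_W\#f_N)\circ B$, kill the first term via \cref{lem: vanishing for the ab diff}, evaluate the second via conjugation and the gluing formula, and then use the $\Z$-valued additivity to reduce the distinctness of the $FKM(f^{2n},\gamma^2)$ to nonvanishing of a single integer invariant. Your version is marginally cleaner in two spots the paper glosses over: you handle all signs of $n,n'$ uniformly via the decomposition $f^{2n}=f^{2(n-n')}\circ f^{2n'}$ rather than splitting into $n,n'\geq 0$ and $n,n'\leq 0$ cases, and you derive $FKM(f)\neq 0$ directly from the hypotheses of this proposition (via $\gamma_\mathrm{const}$) instead of citing \eqref{eq: nonvan for any power}, whose stated hypothesis $(\frakm(W,\fraks,\xi),\frakm(W',\fraks',\psi_*\xi))\in\{(1,0),(0,1)\}$ is only implicitly satisfied here.
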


\begin{proof}
For every $\gamma$, it follows from the gluing formula \cref{gluing}, the additivity formula \cref{lem: ab diff: additive}, and \eqref{eq: pull back spinc cal2} that
\begin{align*}
\begin{split}
&FKM(W\# N,\fraks\#\frakt, \xi, f, \gamma) \\
=& FKM(W\# N,\fraks\#\frakt, \xi, \id_{W}\#f_N, \gamma) 
+ FKM(W\# N,\fraks\#\frakt, \xi, \psi^{-1} \circ (\id_{W'}\#f_N^{-1}) \circ \psi)\\
=& FKM(W\# N,\fraks\#\frakt, \xi, \id_{W}\#f_N, \gamma)
+ FKM(W'\# N,\fraks'\#\frakt, \psi_\ast\xi, \id_{W'}\#f_N^{-1})\\
=& 0+ \frakm(W',\fraks',\psi_\ast\xi) =1 \in \Z_2.
\end{split}
\end{align*}
Thus we have \eqref{eq: intermediate FKMnonzero}.

Next, applying \cref{lem: ab diff: additive} inductively, for $n>0$ and every $\gamma$,
we obtain that
\begin{align*}
\begin{split}
&FKM( W\# N , \fraks\#\frakt, \xi, f^{2n}, \gamma^2)\\
=& FKM( W\# N , \fraks\#\frakt, \xi, (f^2)^{n-1}, \gamma^2)
+ FKM( W\# N , \fraks\#\frakt, \xi, f^2)\\
=&\cdots \\
=& FKM( W\# N , \fraks\#\frakt, \xi, \id, \gamma^2)
+ nFKM( W\# N , \fraks\#\frakt, \xi, f^2)
\end{split}
\end{align*}
This combined with \eqref{eq: nonvan for any power} implies \eqref{eq: distinct FKM} for $n, n' \geq 0$ with $n\neq n'$.
A similar argument works for $n,n' \leq 0$ just by considering $f^{-1}$ in place of $f$.
\end{proof}

\section{Proof of Theorem~\ref{main1}}
Before proving Theorem~\ref{main1}, we review several definitions and theorems which are used in the proof of \cref{main1}. 
\subsection{Contact topology}
Let $\xi$ be a contact structure on an oriented 3-manifold. 
 A knot $K \subset (Y,\xi)$ is called \textit{Legendrian} if 
 $T_p K \subset \xi_p$ for $p\in K$. A Legendrian knot $K$ in a contact manifold $(Y,\xi)$ has a standard neighborhood $N$ and a framing $fr_\xi$ given by the contact planes. If $K$ is null-homologous, then $fr_\xi$ relative to the Seifert framing is the \textit{Thurston--Bennequin} number of $K$, which is denoted by $tb(K)$.  If one does $fr_\xi-1$-surgery on $K$ by removing $N$ and gluing back a solid torus so as to effect the desired surgery, then there is a unique way to extend $\xi|_{Y-N}$ over the surgery torus so that it is tight on the surgery torus. The resulting contact manifold is said to be obtained from $(Y,\xi)$ by \textit{Legendrian surgery} on $K$. Also, for a knot $K$ in $(S^3, \xi_{\text{std}})$, the {\it maximal Thurston--Bennequin number} is defined as the maximal value of all Thurston--Bennequin numbers for all Legendrian representations of $K$.

  A \textit{symplectic  cobordism} from the contact manifold $(Y_-,\xi_-)$ to $(Y_+,\xi_+)$ is a compact symplectic manifold $(W,\omega)$ with boundary $-Y_-\cup Y_+$ where $Y_-$ is a \textit{concave} boundary component and $Y_+$ is \textit{convex}, this means that there is a vector field $v$ near $\partial W$ which points transversally inwards at $Y_-$ and transversally outwards at $Y_+$, $\mathcal{L}_v \omega= \omega$ and $\iota_v \om |_{Y_\pm}$ is a contact form of $\xi_\pm $. If $Y_-$ is empty, $(W,\omega)$ is called a {\it symplectic filling}.

We mainly follow a technique to construct symplectic cobordisms called {\it Weinstein handle attachment} \cite{Weinstein91}. One may attach a 1-, or 2-handle to the convex end of a symplectic cobordism to get a new symplectic cobordism with the new convex end described as follows.  For a 1-handle attachment, the convex boundary undergoes, possibly internal, a connected sum. A 2-handle is attached along a Legendrian knot $L$ with framing one less than the contact framing, and the convex boundary undergoes a Legendrian surgery. 

\begin{thm}\label{cob}
Given a contact 3-manifold $(Y,\xi=\ker \theta)$ let $W$ be a part of its symplectization, that is $(W= [0,1]\times Y, \omega= d(e^t\theta))$. Let $L$ be a Legendrian knot in $(Y,\xi)$ where we think of $Y$ as $Y\times \{ 1 \}$. If $W'$ is obtained from $W$ by attaching a 2-handle along $L$ with framing one less than the contact framing, then the upper boundary $(Y', \xi ')$ is still a convex boundary. Moreover, if the 2-handle is attached to a symplectic filling of $(Y,\xi)$ then the resultant manifold would be a strong symplectic filling of $(Y'\xi ')$.
\end{thm}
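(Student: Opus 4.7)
The plan is to follow Weinstein's original construction by exhibiting a local symplectic $2$-handle and gluing it in using a contactomorphism along a standard Legendrian neighborhood. Specifically, I would work in $(\mathbb{R}^4,\omega_{\mathrm{std}})$ with coordinates $(x_1,y_1,x_2,y_2)$ and consider the Liouville vector field
\[
v = (2x_1\partial_{x_1} - y_1\partial_{y_1}) + (2x_2\partial_{x_2} - y_2\partial_{y_2}),
\]
which satisfies $\mathcal{L}_v\omega_{\mathrm{std}} = \omega_{\mathrm{std}}$ and is gradient-like for the Morse function $\phi = x_1^2 + x_2^2 - \tfrac{1}{2}(y_1^2+y_2^2)$. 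The origin is an index-$2$ hyperbolic zero, the stable manifold is the $(y_1,y_2)$-plane, and the descending sphere, obtained by intersecting the stable manifold with the level set $\{\phi=-\varepsilon\}$, is a Legendrian unknot in the contact hypersurface cut out there. Excising a neighborhood of this sphere and thickening along the flow of $v$ produces the standard model of a Weinstein $2$-handle with concave attaching region $S^1\times D^2$ and convex upper boundary obtained by Legendrian surgery.

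Next I would invoke Weinstein's Legendrian neighborhood theorem to identify a tubular neighborhood of $L$ in $(Y,\xi)$ with the standard contact neighborhood of the attaching Legendrian in the model, and lift this to a symplectomorphism of collar neighborhoods intertwining the Liouville fields. The essential local computation is that the framing on $L$ determined by the handle's product structure differs from the Seifert framing by exactly $\mathrm{tb}(L)-1$; equivalently, the contact framing differs from the smooth attaching framing by $-1$. This is precisely the numerical hypothesis of the theorem and is what allows the local model to be glued in.

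Having identified the attaching region, I would glue the local handle onto the symplectization collar $([0,1]\times Y, d(e^t\theta))$ near $L\subset Y\times\{1\}$. Because the gluing intertwines the Liouville fields along the common hypersurface, the $2$-forms and vector fields piece together to give a symplectic form $\omega'$ on $W'$ and a global Liouville field $v'$ which points outward along the new upper boundary $Y'$ (outward on the symplectization part by construction, and outward on the top of the handle because $v$ flows from the stable to the unstable level sets of $\phi$). Thus $\iota_{v'}\omega'|_{Y'}$ is a contact form defining $\xi'$, so $(Y',\xi')$ is convex. For the filling statement, the same gluing applied to a symplectic filling $X$ of $(Y,\xi)$ along a collar of its convex boundary—where a Liouville field is defined by hypothesis—yields a symplectic manifold $X'$ with unique convex boundary $(Y',\xi')$, hence a strong symplectic filling.

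The main obstacle is the framing computation: verifying that the contact framing on the attaching Legendrian in the local model differs from the smooth handle-attaching framing by exactly $-1$. This is a direct but careful calculation of the restriction of $v$ to the boundary of a tubular neighborhood of the descending sphere, together with a comparison between the contact framing induced by $\ker(\iota_v\omega_{\mathrm{std}})$ and the product framing coming from the handle's trivialization.
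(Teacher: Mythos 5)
The paper does not prove \cref{cob}; immediately after the statement it records that the result is due to Eliashberg \cite{yasha91} for Stein fillings, Weinstein \cite{Weinstein91} for strong fillings, and Etnyre--Honda \cite{eh02} for the weak case, and simply cites those sources. Your sketch is, in outline, the Weinstein construction from \cite{Weinstein91}: build a local model $2$-handle in $(\R^4,\omega_{\mathrm{std}})$ carrying a Liouville field $v$ gradient-like for an index-$2$ Morse function, observe that the descending sphere in a negative level set is a Legendrian unknot, use the Legendrian neighborhood theorem to transplant the attaching region into $(Y,\xi)$, and glue along the symplectization collar so the Liouville fields match and the new upper boundary is again convex. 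That architecture is correct, and the local model you wrote down (the Liouville field $v$, the Morse function $\phi$, the stable $(y_1,y_2)$-plane, the descending circle being Legendrian) all checks out.

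However, as a proof the sketch is incomplete in exactly the places where the content of the statement lives. First, you explicitly defer the framing computation, but the ``$-1$'' is the entire numerical content of the theorem; a proof must actually compare the handle's product framing along the attaching circle with the contact framing in the model, e.g.\ by computing the winding of the contact planes (or of $v$) relative to the handle trivialization around $S^1$. Second, matching ``the Liouville fields along the common hypersurface'' requires more than a contactomorphism: the Legendrian neighborhood theorem gives an identification of contact structures, whereas the gluing needs the contact \emph{forms} (equivalently the Liouville primitives) to agree, which is arranged by an additional Gray/Moser-type normalization or by rescaling $v$, together with a corner-smoothing at the junction of collar and handle; your sketch passes over this. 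Third, a small imprecision: your first phrasing, ``differs from the Seifert framing by $\mathrm{tb}(L)-1$,'' presupposes $L$ null-homologous, whereas the theorem is stated relative to the contact framing and holds for arbitrary Legendrian $L$; the ``equivalently'' clause (contact framing minus one) is the invariant formulation and should be the one you carry through the computation.

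The filling part of your argument is fine: a strong filling by definition has a Liouville field near its convex boundary, so the same gluing applies verbatim and produces a strong filling of $(Y',\xi')$.
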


The theorem for Stein fillings was proven by Eliashberg \cite{yasha91}, for strong fillings by Weinstein \cite{Weinstein91}, and was first stated for weak fillings by Etnyre and Honda \cite{eh02}.

\subsection{Proof of Theorem~\ref{main1}}

\label{sectionProof of Theorem main1}
In this section, we will show the existence of exotic diffeomorphisms of 4-manifolds with boundary. First, we need the following result to guarantee the existence of topological isotopy between diffeomorphisms of 4-manifolds with boundary.

\begin{thm}[Orson--Powell, {\cite[Corollary~C]{OP}}]
\label{top isotopy}
Let $M$ be a smooth, connected, simply connected, compact 4-manifold with boundary of rational homology of $S^3$ or of $S^1 \times S^2$.  Let $f : M \to M$ be a diffeomorphism such that $f|_{\partial M} = Id_{\partial M}$ and $f_* = Id : H_2(M;\Z) \to H_2(M;\Z)$.
Then f is topologically isotopic rel. $\partial M$ to $Id_M : M \to M$, i.e. there is a topological isotopy $F_t : M \to M$ with $F_0 = f$, $F_1 = Id_M$ such that $F_t|_{\partial M} = Id_{\partial M}$ for all $t \in [0,1]$.

\end{thm}

\begin{lem} \label{2-handle}
Every closed oriented connected 3-manifold $Y$ bounds a simply-connected 4-manifold $W$ that can be decomposed as $ W_0 \cup (\text{a }2\text{-handle})$, where $W_0$ has a Stein structure and the 2-handle is attached along an unknot with framing $-1$. 
\end{lem}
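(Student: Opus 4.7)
The plan is to realize $Y$ as integer surgery on a framed link in $S^3$ and then, via Kirby calculus, manipulate the diagram into the required form using a single auxiliary $-1$-framed unknot. First, by Lickorish--Wallace I would present $Y$ as integer surgery on some framed link $L = L_1 \cup \cdots \cup L_k \subset S^3$ with framings $n_1, \ldots, n_k$; the corresponding 4-manifold $V$ has only 0- and 2-handles, hence is simply-connected and bounds $Y$. Then I would adjoin a single $-1$-framed unknot $U$, initially disjoint from $L$, producing the 4-manifold $V \# (-\C P^2)$, which still bounds $Y$.

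The core step is to handle-slide the components of $L$ over $U$ to lower their surgery coefficients. A sequence of $a_i$ slides of $L_i$ over $U$ in a consistent orientation implements the homological basis change $L_i \mapsto L_i + a_i U$ and changes the self-intersection of $L_i$ by $-a_i^2$. I would sequence the slides one component at a time, first relocating $U$ through the complement of the other link components into a small ball near $L_i$, so that each slide can be realized geometrically as a band-sum of $L_i$ with a pushoff of $U$ via a short band lying inside that ball. Such a band-sum is isotopic to a connect sum with an unknot and hence preserves the knot type of $L_i$ in $S^3$. Consequently the maximum Thurston--Bennequin number of $L_i$ is unchanged by the slides, and choosing each $a_i$ large enough forces the new framing $n_i - a_i^2$ strictly below this bound minus one.

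Having reduced all framings, I would Legendrify each post-slide component of $L$ in $(S^3, \xi_{\mathrm{std}})$ and stabilize to achieve $tb(L_i) = n_i - a_i^2 + 1$, so that every 2-handle in the diagram aside from the one along $U$ meets Eliashberg's Stein handlebody criterion. The sub-4-manifold $W_0$ built from the 0-handle and these Legendrian 2-handles then admits a Stein structure, and setting $h^2$ to be the 2-handle along $U$ yields $W := V \# (-\C P^2) = W_0 \cup h^2$ of the required form: $\partial W = Y$ is preserved throughout, and $\pi_1(W) = 1$ since the diagram has no 1-handles.

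The hard part will be executing the framing-reduction procedure consistently using only one auxiliary unknot: repeated slides of distinct components over $U$ introduce linkings among the $L_i$'s (via $U$) that could, \emph{a priori}, corrupt the Legendrification step if the knot types were to change wildly. The remedy is to sequence the slides component-by-component and carefully relocate $U$ between batches --- possible since the complement of any subcollection of $L$ in $S^3$ is path-connected --- in order to keep each $L_i$ in its original knot type while still achieving arbitrarily large square reductions in its surgery coefficient.
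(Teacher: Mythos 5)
The paper itself does not give a proof; it simply cites \cite{mukherjee2020note}, so your proposal is an attempt to reconstruct the argument. Your overall strategy (realize $Y$ by integer surgery, adjoin a single $-1$-framed unknot $U$, slide the $L_i$ over $U$ to drop their framings below their maximal Thurston--Bennequin numbers, then Legendrify and stabilize) is indeed the standard one and is essentially what the cited reference does. But the specific ``remedy'' you give for what you call the hard part does not work.

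The problem is that \emph{linking number is an isotopy invariant of a pair of circles}. After the first batch of slides, the new component $L_1'$ represents $L_1 + a_1 U$ in homology and in particular $\operatorname{lk}(L_1',U) = -a_1 \neq 0$. There is therefore no isotopy of the circle $U$ in $S^3\setminus L_1'$ that moves $U$ into a $3$-ball disjoint from $L_1'$. Path-connectedness of the complement of a sub-link is irrelevant here: it would let you move a \emph{point} near $L_2$, but not a linked circle. So ``relocating $U$ into a small ball near $L_i$'' fails for every component after the first. Even within a single component the argument as written is incomplete: after one slide $L_i^{(1)} = L_i + U$ is linked with $U$, so $L_i^{(1)}$ and the pushoff $U'$ are no longer a split link, and a band-sum of a non-split two-component link need not be a connected sum (already a band-sum of the $(2,4)$-torus link along a straight band is not an unknot), so ``isotopic to a connect sum with an unknot'' does not apply to the second and subsequent slides. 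The correct way to justify that the knot type of each $L_i$ is preserved is more delicate: one should perform all $a_i$ slides of $L_i$ over $U$ via \emph{parallel} finger-moves, so that the $a_i$ passes of $L_i^{(a_i)}$ through $U$'s hole form a parallel cable which, once $U$ is erased, retracts simultaneously to recover $L_i$. This does not require relocating $U$ at all, and it is what makes the subsequent Legendrification step legitimate. As stated, however, your proposed fix fails, so there is a genuine gap in the proposal even though the plan is sound.
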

\begin{proof}
For a general $Y$, the third author constructed such a manifold $W$ in \cite[Proof of Theorem 1.1]{mukherjee2020note}.
\end{proof}

We shall now prove \cref{main1} where we show the existence of exotic diffeomorphisms of 4-manifolds with boundary.

\begin{proof}[Proof of Theorem~\ref{main1}]\label{Proof of Theorem main1}
Let $N, \frakt, f$ be as in \cref{section: Construction of diffeomorphisms and non-vanishing results}.
For a 3-manifold $Y$, we consider the associated 4-manifold $W_1= W_0 \cup h_1$ as constructed in Lemma~\ref{2-handle}. If necessary, we can attach a 2-handle $h_2$ along the unknot with framing $+1$ on $W_1$.
Note that this process does not change the upper boundary $Y$.
Let us call this modification $W_1$ as well.
Thus we can write
$W_1=W_0 \cup h_1 \cup h_2$. As a core of $h_2$, we have an embedded 2-sphere $S$ whose self-intersection is non-negative and whose homology class is non-torsion. 

Attach an Akbulut--Mazur cork $(A,\tau)$, i.e, a pair of algebraically canceling 1- and 2-handle as shown in the Figure~\ref{attach cork} on $W_1$ along $Y$ such that the 2-handle of $A$ linked the unknotted 2-handle $h_2$ algebraically thrice and with $h_1$ algebraically once. Thus we get a manifold $W= W_1\cup A$ and denote the boundary by $Y'=\partial W$. In particular, by applying a cork-twist one can get a manifold $W'= (W-\operatorname{int}(A)) \cup_\tau A$ with boundary $Y'$. Notice that the cork-twist changed the dotted 1-handle with the 0 framed 2-handle in the Figure~\ref{attach cork}. Since, in $W'$, the two 2-handles $h_1,h_2$ are passing over the 1-handle, this process increases their Thurston--Bennequin numbers without changing the smooth framing with respect to the standard contact structure on $S^3$, see Figure~\ref{tb1}.

By construction and the use of \cref{cob}, $W'$ has a Stein structure and $\frakm(W',\fraks',\xi) = 1$, where $\fraks'$ is the $\Spinc$ structure corresponding to the Stein structure and $\xi$ is the induced contact structure on the boundary. 

\begin{figure}[]
	\begin{center}
  \begin{overpic}[scale=0.6, tics=20]{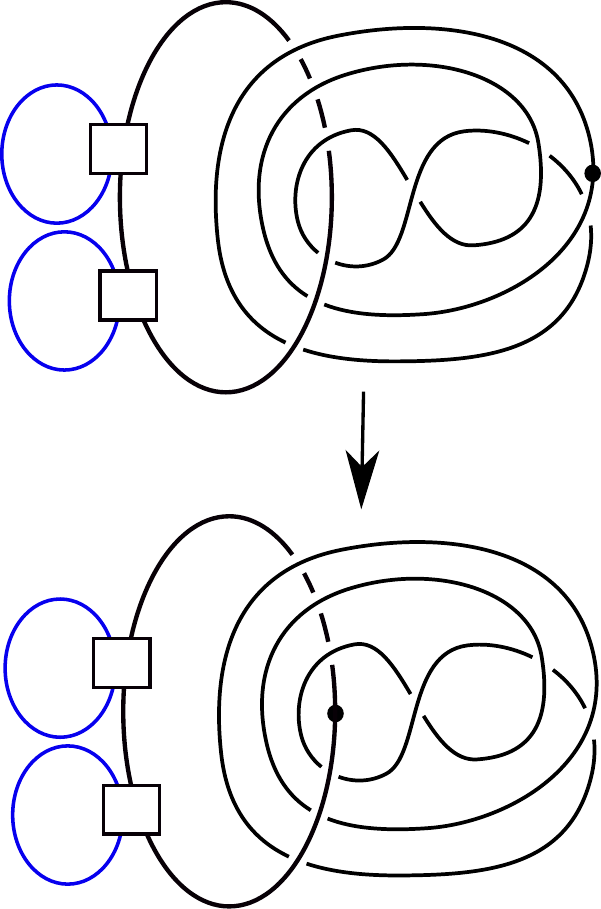}
  \put(-20,220) {$h_1$}
  \put(-20, 170) {$h_2$}
  \put (27,216) {+1}
  \put(30,175) {+3}
  \put(27, 68) {+1}
  \put(30, 25) {+3}
  \put(15,240) {-1}
  \put (15,95){-1}
  \put(20,-5){+1}
  \put(20,145){+1}
  \put(70,140){0}
  \put(160,0){0}
  \put(115,130){Cork twist}
  
\end{overpic}

\caption{Attach the Akbulut--Mazur cork which is linking with with $h_1$ and $h_2$.}

\label{attach cork}
\end{center}
\end{figure}

 \begin{figure}[]
	\begin{center}
  \begin{overpic}[scale=0.6, tics=20]{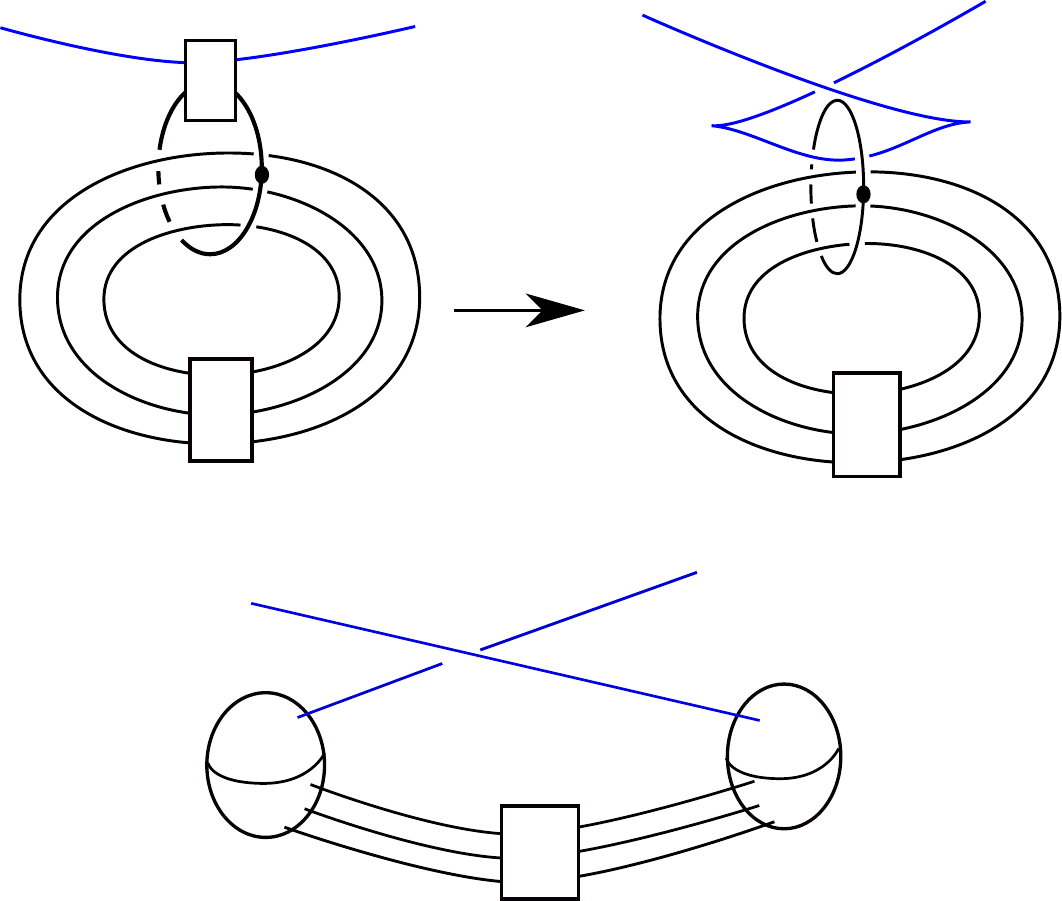}
  \put(55,235){+1}
 \put(135,180){ Isotopy}
 \put(100,-10){Legendrian Representation}
\end{overpic}
\caption{Contact framing of the blue knot is increased by 1 when it passes through the 1-handle. }
\label{tb1}
\end{center}
\end{figure}

Notice that $W$ and $W'$ are related by the cork-twist of $(A,\tau)$ and the cork-twist $\tau$ on $\partial A$ extends over $A \# \CP^2$ \cite{mazur}. This gives a diffeomorphism $W\# \CP^2 \to W'\# \CP^2$ that is the identity map on $W - \operatorname{int}(A)$ and is the extension of the cork-twist to $A \# \CP^2$ on the rest. Thus we get a diffeomorphism $\psi : W\# N \to W'\#N$ 
as the one in \cref{section: Construction of diffeomorphisms and non-vanishing results} (if necessary, we need to precompose $\psi$ with an involution of $\CP^2$ to get the map that acts by identity on homology, a careful proof has been written by Auckly--Kim--Melvin--Ruberman \cite{AKHMR15}), and construct a self-diffeomorphism $f : W\#N \to W\# N$ along the procedure in \cref{section: Construction of diffeomorphisms and non-vanishing results}.
We claim that this diffeomorphism $f$ is the desired diffeomorphism.

Adopt the canonical Spin$^c$ structure on $W$ as $\fraks$ in \cref{section: Construction of diffeomorphisms and non-vanishing results}.
As noted above, $W$ contains an embedded 2-sphere whose self-intersection is non-negative and whose homology class is non-torsion.
Moreover, we have that $\frakm(W',\fraks',\xi)=1$.
Thus it follows from \cref{comp+gluing with gamma} that
\[
FKM( W\# N , \fraks\#\frakt, \xi, f^{2n}, \gamma^2)
\neq FKM( W\# N , \fraks\#\frakt, \xi, f^{2n'}, \gamma^2) \in \Z
\]
for every $\gamma \in \pi_1(\Xi^{\mathrm{cont}}(Y))$ and every distinct $n, n' \in \Z$.
Therefore, by \cref{lem: ab diff: vanishing for some gamma general}, $f^n$ and $f^{n'}$ are not smoothly isotopic to each other through $\Diff(W)$.
On the other hand, it follows from \cref{top isotopy} that all $f^n$ are topologically isotopic to the identity through $\Homeo(W, \del)$.
This completes the proof.
\end{proof}

\begin{proof}[Proof of \cref{thm: emb sphere}]
First, note that all $f^n$ are mutually topologically isotopic as in the proof of Theorem~\ref{main1} above.
Thus all $f^n(S)$ are mutually topologically isotopic.
To show that $f^n(S)$ and $f^{n'}(S)$ are not smoothly isotopic if $n\neq n'$, it suffices to show that $f^{n-n'}(S)$ is not smoothly isotopic to $S$.
This follows from \cref{cor: Ab Z summand} combined with \cref{KM family adjunction}, which we shall prove in \cref{sectionSeveral vanishing results}.
\end{proof}

\begin{rem}
\label{rem: direct sum main}
In the setup of Theorem~\ref{main1},
the mapping class of $f$ in $\Diff(W,\del)$ generates a direct summand isomorphic to $\Z$ in the abelianization of the kernel of 
\[
\pi_0 (\operatorname{Diff} (W,  \partial )  )\to \pi_0(\operatorname{Homeo}(W, \partial)).
\]
This is a direct consequence of \cref{cor: Ab Z summand}.
\end{rem}

\begin{rem}\label{small b_2}
We will construct an explicit example of "small" 4-manifolds with boundary that admits exotic diffeomorphism by following the strategy of the Proof of \cref{main1}. We start with a 4-manifold $W$ which is obtained by attaching a 2-handle $h$ on $B^4$ along an unknot with framing $+1$. Now we will attach a pair of canceling 1- and 2-handle such that the 2-handle of the canceling pair is linked positively with $h$ algebraically thrice, let this be called $W$ and $X= W\# N$ where $b_2(X)= 4$. Now if we apply the cork-twist on $W$, that process will increase the maximum Thurston--Bennequin number of $h$ by 3 and thus the resultant manifold $W'$ will have a Stein structure. Note that $W\#N$ is diffeomorphic to $W'\#N$, and thus by the previous proof, we can construct an exotic diffeomorphism on $X$.  
\end{rem}


\section{Exotic embeddings of 3-manifolds in 4-manifolds}
\label{section: Exotic embeddings of 3-manifolds in 4-manifolds}

Before introducing the results on exotic embeddings, we first review a relation between generalized Smale conjecture and exotic embeddings.

\subsection{Generalized Smale conjecture and exotic embeddings}

Let $Y$ be a closed 3-manifold with the Riemann metric $g$ whose sectional curvature is $\pm 1$.
The generalized Smale conjecture says that the inclusion $\operatorname{Isom}(Y, g )\to \operatorname{Diff}(Y)$ is a homotopy equivalence, where $\operatorname{Isom}(Y, g )$ denotes the group of isometries on $(Y, g )$.
Some examples of $(Y, g )$ which satisfy this property are known and some examples of $(Y, g )$ which do not satisfy it are known. For example, the hyperbolic 3-manifolds satisfy the generalized Smale conjecture \cite{Ga01}. 
We use some results related to the generalized Smale conjecture. 
However, we do not need to restrict ourselves to a Riemannian  metric with sectional curvature $\pm 1$ for our purpose. 
Thus, we consider a closed Riemannian 3-manifold more generally in this paper.

\begin{defn}
We say a Riemannian 3-manifold $(Y, g )$ is {\it admissible}, if the cokernel of the induced map  on 
\[
\pi_0 (\operatorname{Isom}^+(Y, g ) )  \to \pi_0 (\operatorname{Diff}^+(Y))
\]
is trivial, where $\operatorname{Isom}^+(Y, g )$ and $\operatorname{Diff}^+(Y)$ are the groups of orientation preserving isometries and diffeomorphisms of $(Y, g)$ and $Y$ respectively. 
\end{defn}
Our techniques to detect exotic embeddings can be applied for a Riemannian 3-manifold $(Y, g )$ with finite cokernel of $\pi_0 (\operatorname{Isom}^+(Y, g ) )  \to \pi_0 (\operatorname{Diff}^+(Y))$. 

We use the following lemma for embeddings of $Y$ into a 4-manifold $X$. 
\begin{lem}\label{isotopy admissible}
Suppose $(Y, g )$ is an admissible 3-manifold.  
Let $i : Y \to X$ be a smooth embedding and $f$ an orientation-preserving self-diffeomorphism of $X$ satisfying that $f(i(Y) ) = i(Y)$.
Then, for every $n \in \Z$, we can deform $f^n$ by a smooth isotopy of diffeomorphisms of $X$ fixing $i(Y)$ setwise so that 
\[f^n|_{i(Y)} \in  \operatorname{Isom}(i(Y), i_*g ) .
\]
\end{lem}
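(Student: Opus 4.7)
The plan is to first use the admissibility hypothesis to produce an intrinsic isotopy of $i(Y)$ from $f^n|_{i(Y)}$ to an isometry, and then to promote this to an ambient smooth isotopy of $X$ keeping $i(Y)$ setwise invariant via the isotopy extension theorem.

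First I would transfer the restriction to $Y$ via $i$, setting $\tilde{f}_n := i^{-1} \circ (f^n|_{i(Y)}) \circ i$. Since $f$ preserves the orientation of $X$ and the two-sided hypersurface $i(Y)$ inherits a coorientation, we may arrange that $\tilde{f}_n \in \operatorname{Diff}^+(Y)$ (if necessary after reorienting $Y$; the orientation-reversing case would require only a mild variant of the admissibility hypothesis and is essentially the same). By the admissibility of $(Y,g)$, the class $[\tilde{f}_n] \in \pi_0(\operatorname{Diff}^+(Y))$ lies in the image of $\pi_0(\operatorname{Isom}^+(Y,g))$, so there exist an isometry $\phi \in \operatorname{Isom}^+(Y,g)$ and a smooth isotopy $\{h_t\}_{t \in [0,1]}$ of self-diffeomorphisms of $Y$ with $h_0 = \tilde{f}_n$ and $h_1 = \phi$.

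Next I would repackage this as an isotopy of self-diffeomorphisms of $i(Y)$ starting at the identity: define $H_t := i \circ h_t \circ \tilde{f}_n^{-1} \circ i^{-1} \colon i(Y) \to i(Y)$, so that $H_0 = \operatorname{id}_{i(Y)}$ and $H_1 \circ (f^n|_{i(Y)}) = i \circ \phi \circ i^{-1}$ is an isometry of $(i(Y), i_*g)$. Viewing $\{H_t\}$ as an isotopy of embeddings $i(Y) \hookrightarrow X$ with image in $i(Y)$ starting at the inclusion, the isotopy extension theorem for the compact submanifold $i(Y) \subset X$ produces an ambient smooth isotopy $\{\widetilde{H}_t\}_{t \in [0,1]}$ of $X$ with $\widetilde{H}_0 = \operatorname{id}_X$ and $\widetilde{H}_t|_{i(Y)} = H_t$; in particular $\widetilde{H}_t(i(Y)) = i(Y)$ for all $t$.

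Finally, setting $F_t := \widetilde{H}_t \circ f^n$ yields a smooth isotopy of diffeomorphisms of $X$ from $F_0 = f^n$ to $F_1$ which keeps $i(Y)$ setwise invariant, since $F_t(i(Y)) = \widetilde{H}_t(i(Y)) = i(Y)$, and whose endpoint satisfies
\[
F_1|_{i(Y)} \;=\; H_1 \circ (f^n|_{i(Y)}) \;=\; i \circ \phi \circ i^{-1} \;\in\; \operatorname{Isom}(i(Y), i_*g),
\]
which is the desired conclusion. The only genuinely delicate point is the orientation bookkeeping in the first step, needed to land the map in $\operatorname{Diff}^+(Y)$ where admissibility applies; the remainder is a textbook application of isotopy extension.
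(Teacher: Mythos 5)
Your proof is correct and follows essentially the same route as the paper's: identify $f^n|_{i(Y)}$ with a diffeomorphism of $Y$, invoke admissibility to isotope it to an isometry, and then promote the boundary isotopy to an ambient one by isotopy extension. The paper condenses the bookkeeping of $H_t$ and $F_t$ into a single sentence ("By isotopy extension lemma, we complete the proof"), so your version is simply more explicit; in particular your check that $H_1 \circ (f^n|_{i(Y)}) = i \circ \phi \circ i^{-1}$ makes transparent what the paper leaves implicit. One small point: both you and the paper assert that $f^n|_{i(Y)}$ is orientation-preserving (the paper says so flatly, you flag it as the delicate step). This does not follow from $f$ preserving the orientation of $X$ alone, since $f$ could a priori reverse the coorientation of $i(Y)$; in the paper's actual applications $i(Y)$ separates $X$ and $f$ acts trivially on homology, which forces $f$ to preserve each side of $i(Y)$ and hence its coorientation, so the hypothesis is satisfied there. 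Your caution is warranted, and you are right that the rest is a routine isotopy-extension argument.
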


\begin{proof}
 We regard $f$ as a self-diffeomorphism of $i(Y)$. Note that $f|_{i(Y)}$ is orientation preserving. 
Since by the assumption on $(Y, g)$, the map $\pi_0 (\operatorname{Isom}^+(i(Y), i_*g ) )  \to \pi_0 (\operatorname{Diff}^+(i(Y)))$ is sujective, and so the diffeomorphism $f^n$ lies in the image of the map $\pi_0 (\operatorname{Isom}^+(i(Y), i_*g ) )  \to \pi_0 (\operatorname{Diff}^+(i(Y)))$. By isotopy extension lemma, we complete the proof. 
\end{proof}

\begin{lem}\label{admissiblity examle}
Let $(Y, g)$ be one of the following 3-manifolds: 
\begin{itemize}
\item[(i)] elliptic 3-manifolds and 
    \item[(ii)] hyperbolic 3-manifolds. 
\end{itemize}
Then $(Y, g)$ is admissible. 
\end{lem}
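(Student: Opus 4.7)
The plan is to deduce admissibility directly from the (now proven) generalized Smale conjecture in each case, so that the stated map on $\pi_0$ is in fact an isomorphism and hence certainly surjective.

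First I would handle case (ii). Let $(Y,g)$ be a closed hyperbolic $3$-manifold. By Mostow rigidity, $\operatorname{Isom}^+(Y,g)$ is a finite group, and moreover the natural map $\operatorname{Out}(\pi_1(Y)) \to \pi_0(\operatorname{Diff}^+(Y))$ factors through $\pi_0(\operatorname{Isom}^+(Y,g))$. Gabai's theorem~\cite{Ga01} asserts that the inclusion $\operatorname{Isom}(Y,g) \hookrightarrow \operatorname{Diff}(Y)$ is a homotopy equivalence, so in particular the induced map
\[
\pi_0(\operatorname{Isom}^+(Y,g)) \to \pi_0(\operatorname{Diff}^+(Y))
\]
is an isomorphism, giving trivial cokernel.

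For case (i), the corresponding statement is the generalized Smale conjecture for spherical space forms, which has been established in full generality (combining earlier work of Hatcher for $S^3$, Ivanov, McCullough--Rubinstein, Hong--Kalliongis--McCullough--Rubinstein, and Bamler--Kleiner~\cite{BaKl19}). Namely, for any elliptic $3$-manifold $(Y,g)$, the inclusion $\operatorname{Isom}(Y,g)\hookrightarrow \operatorname{Diff}(Y)$ is a homotopy equivalence; restricting to orientation-preserving components gives that
\[
\pi_0(\operatorname{Isom}^+(Y,g)) \to \pi_0(\operatorname{Diff}^+(Y))
\]
is an isomorphism, and hence its cokernel is trivial.

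The only nontrivial input is the generalized Smale conjecture itself; given these deep results, admissibility is essentially a direct read-off on $\pi_0$. The main subtlety to double-check when writing the proof is simply that passing to orientation-preserving components does not destroy the surjectivity (but this is immediate: any orientation-preserving diffeomorphism is isotopic to an isometry, and such an isometry is automatically orientation-preserving since isotopies preserve orientation).
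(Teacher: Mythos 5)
Your proof is correct and takes essentially the same route as the paper: cite Gabai's resolution of the Smale conjecture for closed hyperbolic $3$-manifolds, and cite the generalized Smale conjecture for spherical space forms (assembled from several works) to conclude surjectivity on $\pi_0$. The extra remarks about Mostow rigidity and about restricting to orientation-preserving components are fine but not needed beyond what the paper says.
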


\begin{proof}
For elliptic 3-manifolds, the $\pi_0$-Smale conjecture is proven by combining several works, for more details see \cite[Theorem 1.2.1]{SJDH12}. Also, the Smale conjecture for hyperbolic 3-manifolds is solved by Gabai \cite{Ga01}. 
\end{proof}

In order to prove several vanishing results, we also need the following property: 
\begin{lem}\label{strong L-space}
Let $(Y, g)$ be one of the following geometric 3-manifolds: 
\begin{itemize}
\item[(i)] 3-manifolds having positive scalar curvature metric and 
    \item[(ii)] the hyperbolic three-manifolds lebeled by 
\[
0, 2,3,8 , 12, \ldots, 16, 22, 25, 28 ,\ldots, 33, 39, 40, 42, 44, 46, 49
\]
in the Hodgson-Weeks census which correspond to 3-manifolds in \cite[Table 1]{FM21}. 
\end{itemize}
Then, for any Spin$^c$-structure $\mathfrak{t}$ on $Y$, there is no irreducible Seiberg--Witten solution to $(Y, \mathfrak{t}, g)$, i.e. $Y$ is a strong L-space. 
\end{lem}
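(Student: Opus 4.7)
My plan is to handle the two cases separately, as they rely on quite different mechanisms.

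For case (i), the argument is the classical Lichnerowicz-type application of the Weitzenb\"ock formula to the three-dimensional Seiberg--Witten equations. The plan is to take a metric $g$ of positive scalar curvature on $Y$, let $(B,\phi)$ be any solution of the Seiberg--Witten equations for $(Y,\mathfrak{t},g)$, and substitute into the Weitzenb\"ock identity
\[
D_B^{2}\phi = \nabla_B^{\ast}\nabla_B \phi + \tfrac{s}{4}\phi + \tfrac{1}{2}\rho(F_{B^t})\phi,
\]
using the curvature equation $\rho(F_{B^t})=(\phi\phi^{\ast})_0$. Pairing with $\phi$ and integrating yields a sum of pointwise nonnegative terms plus $\int_Y (s/4)|\phi|^2$, which forces $\phi \equiv 0$ whenever $s>0$. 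Hence no solution with $\phi\not\equiv 0$ exists, i.e. there is no irreducible solution; the argument is independent of the choice of Spin$^c$-structure $\mathfrak{t}$.

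For case (ii), the plan is to invoke the explicit computer-assisted computations of Lin--Lipnowski and Freedman--Manolescu. Concretely, for each labeled 3-manifold $Y$ in the Hodgson--Weeks census listed, one takes $g$ to be the hyperbolic metric. The papers \cite{LL18} (specifically the list in Figure~1 of Lin--Lipnowski) and \cite{FM21} (Table~1 there) rigorously verify, using spectral bounds for the Dirac and Hodge Laplacians on these explicit hyperbolic 3-manifolds, that for \emph{every} Spin$^c$ structure $\mathfrak{t}$ the Seiberg--Witten moduli space on $(Y,g,\mathfrak{t})$ contains only reducible solutions. The plan here is not to reprove these computations but to cite them directly, noting that the list in the statement is exactly the one extracted from those references.

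The main obstacle is, in principle, case (ii): without the Lin--Lipnowski and Freedman--Manolescu computations there would be no elementary way to produce such a definite list of hyperbolic examples, since for a generic hyperbolic 3-manifold irreducible monopoles may well exist. Case (i) is standard and essentially immediate from Weitzenb\"ock. Thus the proof will be very short: one paragraph for (i) based on the Weitzenb\"ock identity, and one paragraph for (ii) reducing the claim to a direct citation of \cite{LL18} and \cite{FM21}.
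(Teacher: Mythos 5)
Your proposal matches the paper's own (very brief) proof: case (i) is disposed of by the Weitzenb\"ock formula exactly as you describe, and case (ii) is a direct citation of the Freedman--Manolescu/Lin--Lipnowski computations (the paper cites \cite[Theorem~1.1]{FM21}). No meaningful difference in approach.
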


\begin{proof}
Since 3-manifolds listed in (i) have positive scalar curvature, by the Weitzenb\"{o}ck formula, we see that there is no irreducible Seiberg--Witten solution to $(Y, \mathfrak{t}, g)$. 
For hyperbolic 3-manifolds in (iv), \cite[Theorem 1.1]{FM21} implies the conclusion. 
\end{proof}

\subsection{Results on exotic embeddings}
In this section, we prove \cref{strongly exotic}.

Let $Y$ be a closed, oriented, connected 3-manifold, and let $X$ be a smooth 4-manifold possibly with boundary. 
We first construct a 4-manifold for a given 3-manifold. 
\begin{lem}\label{emb of 3-manifold}
Given a closed, connected, oriented 3-manifold $Y$, there exists a closed simply-connected 4-manifold $X$ such that $X= X_1\cup_{Y} X_2$ with $b^+_2(X_i)>1$ for $i=1,2$.
Moreover, we can construct $X$ in such a way that there exists a diffeomorphism $f : X \to X$ which is topologically isotopic to the identity but $FSW(X,\mathfrak{s}, f^n)\neq 0$ for every $n\in \Z \setminus \{0\}$ and for some Spin$^c$ structure $\mathfrak{s}$ on $X$, and thus not smoothly isotopic to the identity. 
\end{lem}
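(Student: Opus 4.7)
My plan is to transfer the construction behind \cref{main1} from the setting of 4-manifolds with boundary (detected by $FKM$) to the closed setting (detected by $FSW$), by capping off the auxiliary contact boundary with a simply-connected filling.

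First, apply \cref{2-handle} to $Y$ to obtain a simply-connected $\tilde W$ bounded by $Y$ of the form $W_0\cup h_1\cup h_2$ with $W_0$ Stein, and attach an Akbulut--Mazur cork $(A,\tau)$ exactly as in the proof of \cref{main1}: this produces $W$ bounded by some $Y'$, a Stein filling $W'$ of $Y'$ arising from the cork twist, and a rel-boundary diffeomorphism $\psi:W\# N\to W'\# N$ with $N=\CP^2\# 2(-\CP^2)$. The construction of \cref{section: Construction of diffeomorphisms and non-vanishing results} then gives the homologically trivial diffeomorphism
\[
f_0:=(\id_W\# f_N)\circ\psi^{-1}\circ(\id_{W'}\# f_N^{-1})\circ\psi\in\Diff(W\# N,\del).
\]
Now apply \cref{2-handle} to $-Y'$ and connected-sum with enough copies of $S^2\times S^2$ to obtain a simply-connected $Z$ with $\partial Z=-Y'$ and $b_2^+(Z)>1$; by further stabilization of $\tilde W$ with $S^2\times S^2$ summands we may also arrange $b_2^+(\tilde W)>1$. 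Define $X:=(W\# N)\cup_{Y'}Z$, and extend $f_0$ by the identity on $Z$ to obtain a self-diffeomorphism $f$ of $X$. Van Kampen gives $\pi_1(X)=1$, the decomposition $X=\tilde W\cup_Y X_2$ has $b_2^+(\tilde W),\,b_2^+(X_2)>1$, and since $f$ acts trivially on $H_2(X;\Z)$, every $f^n$ is topologically isotopic to $\id_X$ by Freedman--Quinn--Perron.

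For the $FSW$ computation I mirror \cref{comp+gluing}: Ruberman's additivity and conjugation-invariance of $FSW$ give
\[
FSW(X,\fraks,f)\equiv FSW((W\cup Z)\# N,\id\# f_N)+FSW((W'\cup Z)\# N,\id\# f_N^{-1})\pmod 2,
\]
and the fiberwise connected-sum/blow-up formula of \cite{BK20} (the closed counterpart of \cref{gluing}) identifies each summand with an ordinary Seiberg--Witten invariant, namely $SW(W\cup Z,\fraks_1)$ and $SW(W'\cup Z,\fraks_2)$ mod $2$. Using an Etnyre--Honda symplectic cap for $(Y',\xi)$, we may arrange that $W'\cup Z$ is closed symplectic so that Taubes' theorem gives $SW(W'\cup Z,\fraks_2)\ne 0$; we simultaneously choose $\fraks_1$ so that the formal dimension of the SW moduli over $W\cup Z$ is negative, or so that \cref{Froyshov family} applies via the essential $+1$-sphere inside $\tilde W$, giving $SW(W\cup Z,\fraks_1)=0$. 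Hence $FSW(X,\fraks,f)$ is odd and therefore non-zero as an integer; the integer-valued additivity of $FSW$ on the homologically trivial subgroup then yields $FSW(X,\fraks,f^n)=n\cdot FSW(X,\fraks,f)\ne 0$ for every $n\ne 0$.

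The main obstacle is ensuring that the two Spin$^c$ structures $\fraks_1,\fraks_2$ can be chosen compatibly, in the sense that they arise as restrictions of a single Spin$^c$ structure $\fraks$ on $X$ under the two natural decompositions (the original one through $W$ and the ``twisted'' one through $W'$ via $\psi$). This needs careful tracking of how the cork twist and the auxiliary diffeomorphism $f_N$ act on Spin$^c$ structures, and may require further stabilization on both sides so that the dimension-based vanishing on the $W\cup Z$ side and Taubes' non-vanishing on the $W'\cup Z$ side coexist for the same ambient Spin$^c$ structure $\fraks$ on $X$.
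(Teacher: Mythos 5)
Your overall strategy is the same as the paper's: transfer the $FKM$ construction to the closed setting by capping off the auxiliary contact boundary with a simply-connected cap and extend $f_0$ by the identity. (The paper cites \cite{BK20}, Corollary 9.6 and the proof of Theorem 9.7, for the final nonvanishing, while you re-derive it; that difference is cosmetic.) However, there is a genuine gap in how you arrange $b_2^+(X_1)>1$, and a smaller inconsistency in your construction of $Z$.

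The problematic step is the stabilization of $\tilde W$ by $S^2\times S^2$ summands. The inner piece $\tilde W = W_1$ is contained in \emph{both} $W$ and the cork-twisted manifold $W'$ (since $W' = (W\setminus\operatorname{int} A)\cup_\tau A$ with $W_1 \subset W\setminus\operatorname{int}A$). Hence stabilizing $\tilde W$ stabilizes $W'$ as well, so $W'\cup Z$ acquires $S^2\times S^2$ summands. Since $b_2^+(W'\cup Z)>1$, the ordinary connected-sum vanishing theorem forces $SW(W'\cup Z\,\#\,(S^2\times S^2)^k)=0$; Taubes' nonvanishing for symplectic manifolds no longer applies, because a manifold with an $S^2\times S^2$ summand admits no symplectic structure. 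Your mod-2 computation then becomes $0+0=0$, and the nonvanishing of $FSW(X,\fraks,f)$ collapses. To obtain $b_2^+(\tilde W)>1$ without destroying the argument, you should instead attach additional $+1$-framed $2$-handles along unknots (as in the passage from $W_0\cup h_1$ to $W_0\cup h_1\cup h_2$ in \cref{sectionProof of Theorem main1}); after the cork twist raises their Thurston--Bennequin numbers these remain Weinstein handles, so $W'$ stays Stein and Taubes' theorem still applies, while the extra $+1$-spheres provide the desired $b_2^+$.

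Separately, your construction of the cap $Z$ oscillates: you first propose $Z$ via \cref{2-handle} applied to $-Y'$ plus $S^2\times S^2$ stabilization (which carries no symplectic structure), but then invoke ``an Etnyre--Honda symplectic cap'' to make $W'\cup Z$ symplectic. These are incompatible choices; you must commit to a simply-connected symplectic cap with $b_2^+>1$ for $(Y',\xi')$ from the start, as the paper does (citing \cite{EMM20}), since the Taubes nonvanishing on the $W'$ side is essential. Finally, the Spin$^c$ compatibility issue you flag at the end is real but already handled by the framework of \cref{section: Construction of diffeomorphisms and non-vanishing results} (cf.\ \eqref{eq: pull back spinc cal} and \eqref{eq: pull back spinc cal2}), so no new argument is needed there.
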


\begin{proof}
We will follow the strategy of Proof of the Theorem~\ref{main1}, where we showed that given a 3-manifold $Y$ we can construct a compact simply connected 4-manifold $W\# N$ with boundary $Y'$ (where there is a ribbon homology cobordism from $Y$ to $Y'$) and there exists a self-diffeomorphism $f: W\# N \to W\# N$ which is topologically isotopic to the identity rel to the boundary. By construction, $Y$ is smoothly embedded in $W$ and $Y$ bounds a submanifold $W_1$ with $b_2^+(W_1)>1.$ Now by attaching a simply connected symplectic cap on $(Y',\xi')$ with $b^+_2 >1$ (existence of such caps are shown in \cite{EMM20}) we can get our desired simply-connected 4-manifold $X$.
Let $\mathfrak{s}$ be the Spin$^c$ structure on $X$ obtained as the connected sum of the canonical Spin$^c$ structure and the Spin$^c$ structure $\mathfrak{t}$ on $N$ considered in \cref{section: Construction of diffeomorphisms and non-vanishing results}.
Also, we can extend $f$ on the symplectic cap as the identity and get our desired diffeomorphism $f$.
Now $FSW(X,\mathfrak{s},f^n)\neq 0$ for $n\neq0$ follows from \cite[Corollary~9.6, Proof of Theorem~9.7]{BK20}.
\end{proof}

\begin{proof}[Proof of \cref{strongly exotic}]
Let $Y$ be a hyperbolic 3-manifold listed in Theorem~\ref{strongly exotic} and let $X$ and $f$ be a corresponding 4-manifold and diffeomorphism constructed in \cref{emb of 3-manifold}.
For $n \in \Z$,
define smooth embeddings $i_n : Y \to X$ by $i_n(y)=f^n(y)$ for $y \in Y$.
Since $f:X\to X$ is topologically isotopic to the identity, we see that all the embeddings $i_n$'s are topologically isotopic to each other. It is now enough to prove that the image of $i_0$ is not smoothly isotopic to the image of $i_n$ for every $n\neq0$. If the image of $i_0$ and the image of $i_n$ are smoothly isotopic, then we can further deform $f^n$ by smooth isotopy so that $f^n(Y)= i_n(Y)=Y$. Moreover, using \cref{isotopy admissible}, we may also assume $f^{-1}_n|_{Y}$ is isometry with respect to the metric $g$ considered in \cref{strong L-space}. Moreover, there is no irreducible solution to the Seiberg--Witten equation with respect to $(Y, \mathfrak{t}, g)$ from \cref{strong L-space}. 
But then the vanishing theorem Theorem~\ref{conn sum vanish family} implies that $FSW(X,f^n)=0$ which is a contradiction. For a connected sum of elliptic 3-manifolds, we use \cref{conn sum vanish family:PSC} instead of \cref{conn sum vanish family}. 
\end{proof}
\begin{rem}
The proof of \cref{strongly exotic} is a constructive proof. One thing that we are not sure about is how to control the second Betti number of $X$. So one may ask the following question.
\end{rem}
\begin{ques}
Given an oriented, connected 3-manifold $Y$, what would be the small second Betti number for $X$ such that $Y$ admits an exotic embedding in $X$.
\end{ques}

More generally, we can find exotic embeddings when the family gauge theoretic invariants do not vanish: 
\begin{thm}
\label{exotic 3-manifolds non-vanishing}
Let $(X, \s)$ be a compact simply-connected Spin$^c$ 4-manifold with or without boundary.
If $\del X \neq \emptyset$, we equip $\del X$ a contact structure $\xi$.
Let $f:X\to X$ be a self-diffeomorphism which is the identity on $\del$ if $\del X \neq \emptyset$.
Suppose that $FSW(X,\s,f)\neq 0$ if $\del X=\emptyset$, and that $FKM(X,\fraks,\xi,f)\neq0$ if $\del X \neq \emptyset$.
Let $Y$ be one of the following 3-manifolds: 
\begin{itemize}
\item[(i)] the connected sum of elliptic 3-manifolds, and 
    \item[(ii)] the hyperbolic three-manifolds labelled by 
\[
0, 2,3,8 , 12, \ldots, 16, 22, 25, 28 ,\ldots, 33, 39, 40, 42, 44, 46, 49
\]
in the Hodgson-Weeks census which correspond to 3-manifolds in \cite[Table 1]{FM21}. 
\end{itemize}
\begin{enumerate}
    \item
If $\partial X = \emptyset $ and $X$ has the decomposition $X= X_1 \cup_Y  X_2$ such that $b_2^+(X_i)>1$ for $i=1,2$, where $X_1$ and $X_2$ are compact 4-manifold with boundary $Y$ and $-Y$ respectively.  
Then there exist infinitely many embedded 3-manifolds $\{f^n(Y)\}_{n \in \Z}$ that are mutually not smoothly isotopic. 
\item If $\partial X \neq \emptyset $ and $X$ has the decomposition $X= X_1 \cup_Y  X_2$ such that $b_2^+(X_2)>1$, where $X_1$ and $X_2$ are compact 4-manifold with boundary $(\partial X) \amalg Y$ and $-Y$ respectively.  
Then there exist infinitely many embedded 3-manifolds $\{f^n(Y)\}_{n \in \Z}$ that are mutually not smoothly isotopic. 
\end{enumerate}
\end{thm}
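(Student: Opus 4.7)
The plan is to adapt the argument of \cref{strongly exotic} to the present setting, the only difference being that the diffeomorphism $f$ is supplied by hypothesis rather than constructed. Define embeddings $i_n \colon Y \hookrightarrow X$ by $i_n(y) = f^n(y)$. Since $f^{-m}$ is a self-diffeomorphism of $X$, smooth isotopy of $i_n(Y)$ and $i_m(Y)$ is equivalent to smooth isotopy of $i_{n-m}(Y)$ and $Y$. It therefore suffices to exhibit an infinite set $S \subset \Z$ closed under subtraction such that, for every $k \in S \setminus \{0\}$, $f^k(Y)$ is not smoothly isotopic to $Y$ (rel $\partial X$ in the boundary case).

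Fix such a $k \neq 0$ and suppose for contradiction that $f^k(Y)$ is smoothly isotopic to $Y$. The isotopy extension theorem lets us modify $f^k$ within its smooth isotopy class so that $f^k(Y) = Y$ setwise. By \cref{admissiblity examle}, $(Y,g)$ is admissible for the canonical geometric metric $g$; applying \cref{isotopy admissible}, we further isotope $f^k$, keeping $Y$ setwise fixed, so that $f^k|_Y$ becomes an isometry of $(Y,g)$. By \cref{strong L-space}, $(Y,g)$ is a strong L-space with $b_1(Y)=0$, and for connected sums of elliptic manifolds $g$ is moreover a positive scalar curvature metric. A standard collar-extension argument then shows that, up to smooth isotopy, $f^k$ decomposes as $f^k_1 \cup_Y f^k_2$ with $f^k_i \in \Diff(X_i)$ agreeing compatibly on a bicollar of $Y$, placing us squarely in the hypotheses of the fiberwise connected-sum vanishing theorems.

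In the closed case $\partial X = \emptyset$, \cref{conn sum vanish family} (or \cref{conn sum vanish family:PSC} for elliptic pieces, which dispenses with the isometry requirement) now yields $FSW(X,\s,f^k) = 0$; in the boundary case $\partial X \neq \emptyset$, \cref{conn sum vanish family1} yields $FKM(X,\s,\xi,f^k) = 0$. To contradict the hypothesis $FSW(X,\s,f) \neq 0$ (or $FKM(X,\s,\xi,f) \neq 0$) we invoke additivity of these invariants under composition (\cref{prop: additivity} for $FKM$ and Ruberman's analogue for $FSW$). When $f$ preserves the homology orientation we obtain $FSW(X,\s,f^k) = k \cdot FSW(X,\s,f) \neq 0$ for every $k \neq 0$, so one may take $S = \Z$. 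If $f$ reverses the homology orientation, we apply the same argument to $f^2$, which always preserves the homology orientation and satisfies $FSW(X,\s,f^2) \neq 0$ by a separate non-vanishing check on the mapping torus; in that case we set $S = 2\Z$. The boundary case is identical after replacing $FSW$ by $FKM$.

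The main technical obstacle I anticipate is the collar-extension step: once $f^k|_Y$ has been arranged to be an isometry, one must smoothly isotope $f^k$ on a tubular neighbourhood $[-1,1]\times Y$ so that it becomes a genuine product, and hence a connected sum in the sense required by the vanishing theorems. This is a standard argument exploiting contractibility of the space of collars, but some care is needed in the boundary case to keep the isotopy inside $\Diff(X,\partial)$ and to preserve the contact structure on $\partial X$. A secondary bookkeeping issue is the $\Z$ versus $\Z_2$ dichotomy for $FSW$ when $f$ reverses the homology orientation, which is handled by passing to even powers as above.
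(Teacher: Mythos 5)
Your proposal matches the paper's argument: reduce via $f^{-m}$ to showing $f^k(Y)$ is not isotopic to $Y$, isotope $f^k$ so that it respects $X=X_1\cup_Y X_2$ with $f^k|_Y$ isometric (via \cref{isotopy admissible} for hyperbolic $Y$, bypassing isometry via \cref{conn sum vanish family:PSC} for elliptic connected sums), deduce vanishing of the family invariant from the fiberwise connected-sum vanishing theorems, and contradict the non-vanishing hypothesis through additivity. The paper's own proof is extremely terse (``follows from the proof of \cref{strongly exotic}''; use \cref{conn sum vanish family1} in the boundary case), and you helpfully fill in the collar-extension step that the paper suppresses.

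One step does not close, however. You assert $FSW(X,\s,f^2)\neq 0$ ``by a separate non-vanishing check on the mapping torus'' in the case that $f$ reverses the homology orientation, but this is not an argument: mod-2 additivity gives $FSW(X,\s,f^2)\equiv 2\,FSW(X,\s,f)\equiv 0 \pmod 2$, which is vacuous, and there is no a priori reason the $\Z$-valued invariant of $f^2$ should be nonzero. In the paper this never surfaces because the specific $f$ coming from \cref{emb of 3-manifold} is topologically isotopic to the identity, hence homologically trivial, so the integer-valued additivity formula applies at every step. To run the argument from the abstract hypothesis $FSW(X,\s,f)\neq 0$ you should either add the standing assumption that $f$ is homologically trivial (which is what actually happens in all applications), or accept the weaker conclusion of distinguishing $Y$ from $f^k(Y)$ for $k$ odd -- which does not by itself produce three pairwise non-isotopic embeddings from the $\Z_2$-valued invariant. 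A minor further point: \cref{admissiblity examle} covers elliptic and hyperbolic 3-manifolds but not their connected sums, so the blanket statement ``$(Y,g)$ is admissible for the canonical geometric metric'' is wrong for case (i); the correct route there, as your parenthetical already notes, is \cref{conn sum vanish family:PSC}, which avoids the isometry requirement entirely.
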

Note that \cref{exotic 3-sphere} follows from \cref{exotic 3-manifolds non-vanishing}. 
\begin{proof}
When $\partial X= \emptyset$, the result follows from the proof of \cref{strongly exotic}. When $\partial X \neq \emptyset$, we just use the vanishing result \cref{conn sum vanish family1} on the families Kronheimer--Mrokwa's invariant instead. 
\end{proof}

\begin{rem}
Notice that, given any closed, connected 3-manifold $Y$, we can always find a closed simply-connected 4-manifold $X$ where $Y$ is smoothly embedded and a self-diffeomorphism $f:X\to X$ which is topologically isotopic but not smoothly. So it is very natural to think that the set $\{f^n(Y)\}_{n\in\Z}$ contains all exotically embedded pairs of $Y$, i.e. topologically isotopic as a pair but not smoothly. However, we cannot conclude that at this point because our vanishing result doesn't hold for all 3-manifolds. So we can ask the following question:
\end{rem}

\begin{ques}
Given a closed, connected 3-manifold $Y$ how does one construct a closed 4-manifold $X$ such that there exists a pair of smooth embeddings $i_1,i_2 : Y\to X$ that are topologically isotopic but not smoothly? 
\end{ques}


\appendix

\section{Excision for determinant line bundles} 
\label{section: Excision for determinant line bundles} 

In this section, we explain the excision principle that is used to give signs to variants of Kronheimer--Mrowka's invariant for 4-manifolds with contact boundary. 
This argument is well-known for experts and essentially done in Appendix B of \cite{Char04}.

For $i=1, 2$
let $X_i$ be a Riemannian 4-manifold and $A_i, B_i$ be  codimension 0 submanifold of $X_i$.
Here we assume $X_1$ and  $X_2$ are closed for simplicity, but this assumption is not essential.
For example, we can apply similar arguments to manifolds with conical ends under suitable Sobolev completion. Assume 
$A_i \cap B_i \subset X_i$ is a compact codimension-0 submanifold and also 
an isometry between 
$A_1 \cap B_1$ and $A_2 \cap B_2$ is fixed.
We will identify them by this isometry.
For $i=1, 2$, 
suppose we are given vector bundles
$E_i, F_i$ on $X_i$ and 
 elliptic differential operators of order $l \in \Z_{\geq 1 }$
\[
D_i: \Gamma(X_i; E_i)\to \Gamma(X_i; F_i)
\]
which are identical on $A_1\cap B_1 =A_2\cap B_2$.

Using the identification given above, we form Riemannian 4-manifolds 
$\wt{X}_1=A_1 \cup B_2$,  $\wt{X}_2=A_2 \cup B_1$
and
vector bundles 
 $\wt{E}_1=E_1|_{A_1}\cup E_2|_{B_2}$, 
 $\wt{E}_2=E_2|_{A_1}\cup E_1|_{B_1}$, 
 $\wt{F}_1=F_1|_{A_1}\cup F_2|_{B_2}$
 $\wt{F}_2=F_2|_{A_1}\cup F_1|_{B_1}$.

Define elliptic operator 
\[
\wt{D}_1=
\begin{cases}
D_1 & \text{on } A_1 \\
D_2 & \text{on } B_2
\end{cases}
\]
\[
\wt{D}_2=
\begin{cases}
D_2 & \text{on } A_2 \\
D_1 & \text{on } B_1
\end{cases}
\]
$D_1, D_2, \wt{D}_1, \wt{D}_2$ define Fredholm operators under Sobolev completions
\[
L^2_{k+l}\to L^2_k
\]
for $k \in \R$.
In general, 
for a Fredholm operator $D$, 
we will define the 1-dimensional real vector space $\det D$ by
\[
\det D=\Lambda^{\max}\ker D \otimes \Lambda^{\max}\cok D^*.
\]
\begin{thm}\label{excision}
We can associate a linear isomorphism of 1-dimensional real vector space
\begin{align}\label{excision isom}
\det D_1 \otimes \det D_2\to 
\det \wt{D}_1 \otimes \det \wt{D}_2, 
\end{align}
which is independent of data used in the construction up to homotopy.
\end{thm}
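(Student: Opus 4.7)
My plan is to follow the standard cut-and-paste strategy for determinant line bundles, as in Appendix B of Charbonneau's thesis. The geometric picture is that $X_1, X_2$ are glued from the four pieces $A_1, B_1, A_2, B_2$ along a common separating hypersurface $Y \subset N$, and the manifolds $\widetilde X_1, \widetilde X_2$ are obtained by swapping $B_1$ and $B_2$. Since $D_1$ and $D_2$ agree on the overlap $A_i \cap B_i$, they restrict to the same operator on a collar of $Y$, which is what lets us produce the isomorphism.

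First I would stretch the neck. Replace a collar of $Y$ inside $X_i$ (resp. $\widetilde X_i$) by a long cylinder $[-T,T]\times Y$ equipped with the translation-invariant elliptic operator inherited from $D_1 = D_2$ on the overlap. Since $D_i$ is elliptic of order $l$, it takes the form $\sigma(\partial_t + B)$ in a collar, for $B$ a self-adjoint elliptic operator on $Y$. Deforming to this product form on the collar, and then letting $T\to\infty$, does not change the determinant line bundle up to canonical homotopy (one uses the continuity of the $\det$ functor under continuous Fredholm deformations, and the contractibility of the space of deformations).

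Second I would invoke the APS-type splitting principle: for $T$ sufficiently large, the Fredholm operator $D_i$ on the stretched $X_i$ can be deformed (again through Fredholm operators) to the direct sum of operators on $A_i$ and $B_i$ with cylindrical ends, each equipped with an APS boundary condition determined by the spectral decomposition of $B$. This yields canonical isomorphisms of determinant lines
\begin{align*}
\det D_1 &\cong \det D_1^{A_1}\otimes \det D_1^{B_1},\\
\det D_2 &\cong \det D_2^{A_2}\otimes \det D_2^{B_2},\\
\det \widetilde D_1 &\cong \det D_1^{A_1}\otimes \det D_2^{B_2},\\
\det \widetilde D_2 &\cong \det D_2^{A_2}\otimes \det D_1^{B_1},
\end{align*}
where $D_j^{A_i}$ and $D_j^{B_i}$ denote the APS problems on the cylindrical-end pieces. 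The crucial point is that the boundary operator $B$ is the same in all four splittings, so the APS boundary conditions match and the pieces $D_1^{A_1}, D_1^{B_1}, D_2^{A_2}, D_2^{B_2}$ on both sides of \eqref{excision isom} are literally identical. Tensoring and reordering the factors then gives the desired isomorphism $\det D_1 \otimes \det D_2 \to \det \widetilde D_1 \otimes \det \widetilde D_2$.

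Finally, independence of the choices (cut-off function producing the collar, neck length $T$, interpolating Fredholm family) up to homotopy follows because in each case the space of admissible choices is contractible (an affine or convex space of cut-off functions, an interval of $T$-parameters, and a convex set of interpolating operators), so any two constructions fit into a continuous one-parameter family of Fredholm operators whose induced determinant line isomorphisms are homotopic. The main technical obstacle is the APS splitting step: one must check that for large $T$ the small eigenvalues of $D_i$ on the stretched manifold correspond bijectively to those of the APS pieces, so that the splitting of determinants is genuinely canonical and not just an isomorphism of 1-dimensional spaces. This is handled by the standard analytic lemma that on a long cylinder the low-lying spectrum of $\sigma(\partial_t+B)$ with APS conditions is exponentially close to $\ker B$, so one can compare kernels and cokernels directly; this is carried out in detail in Charbonneau's argument, and no new idea is required here.
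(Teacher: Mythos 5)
Your approach is genuinely different from the paper's, and it also has a gap that the paper's argument avoids. The paper does not stretch the neck or invoke APS splitting at all. Instead it chooses square roots of partitions of unity $\phi_i^2+\psi_i^2=1$ subordinate to $(A_i,B_i)$, with $\phi_1=\phi_2$ and $\psi_1=\psi_2$ on the overlap, and builds explicit cut-and-paste operators
\[
\Phi=\begin{bmatrix}\phi_1 & \psi_2\\ -\psi_1 & \phi_2\end{bmatrix},\qquad
\Psi=\begin{bmatrix}\phi_1 & -\psi_1\\ \psi_2 & \phi_2\end{bmatrix},
\]
which are mutually inverse. Setting $D=D_1\oplus D_2$ and $\widetilde D=\widetilde D_1\oplus \widetilde D_2$, the difference $K:=\Psi\widetilde D\Phi-D$ turns out to be a differential operator of order strictly less than $l$, hence compact from $L^2_{k+l}$ to $L^2_k$. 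The linear Fredholm family $D_t=D+tK$ then gives the isomorphism of determinant lines directly, with canonicity following because the space of choices (partitions of unity with the prescribed matching on the overlap) is convex. No spectral analysis on the separating hypersurface is needed.

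The gap in your version: you assert that an elliptic operator of order $l$ has the collar form $\sigma(\partial_t+B)$, but this is only the normal form for first-order (Dirac-type) operators; the theorem is stated for arbitrary $l\geq 1$, and a general order-$l$ elliptic operator on a collar is a polynomial of degree $l$ in $\partial_t$ with operator coefficients, not a single translation generator conjugated by a symbol. APS-type splitting in that generality is a much more delicate affair. Even restricting to $l=1$, you would also need to deal with the standard wrinkle that when $\ker B\neq 0$ the naive APS boundary condition is not elliptic and the determinant-line splitting requires a choice of Lagrangian splitting of $\ker B$; you do not address how this choice is made consistently across all four pieces, which is exactly where the ``canonical'' in the statement would be at risk. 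The compact-perturbation argument sidesteps both problems entirely, which is precisely why the paper takes that route. If you want to keep the neck-stretching picture, you should at minimum restrict to first order, treat the $\ker B$ ambiguity explicitly (or perturb so $\ker B=0$), and then explain why the result is independent of the neck-stretching parameter through a contractible family — but the cutoff-function trick is shorter and uniform in $l$.
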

As remarked after the proof, this can be easily adapted to the case with conical ends as considered in this paper.
Note that in order to fix a sign of the unparametrized Kronheimer--Mrowka invariant, considering families of operators, since it is enough to give an orientation of one fiber of the determinant line bundle.
\par
\begin{proof}
Choose square roots of partition of unity
\[
\phi^2_1+\psi^2_1=1
\]
\[
\phi^2_2+\psi^2_2=1
\]
subordinate to
$(A_1, B_1)$ and $(A_2, B_2)$ such that 
\[
\phi_1=\phi_2 \text{ and } \psi_1=\psi_2
\]
on $A_1 \cap B_1 =A_2 \cap B_2$.
Define
\[
\Phi: \Gamma(X_1; E_1)\oplus \Gamma(X_2; E_2)
\to \Gamma(\wt{X}_1; \wt{E}_1)\oplus \Gamma(\wt{X}_2; 
\wt{E}_2)
\]
and
\[
\Psi:\Gamma(\wt{X}_1; \wt{F}_1)\oplus \Gamma(\wt{X}_2; 
\wt{F}_2) 
\to 
\Gamma(X_1; F_1)\oplus \Gamma(X_2; F_2)
\]
by 
\[
\Phi=
\begin{bmatrix}
\phi_1 & \psi_2 \\
-\psi_1 & \phi_2
\end{bmatrix}
\]
and 
\[
\Psi=
\begin{bmatrix}
\phi_1 & -\psi_1 \\
\psi_2 & \phi_2.
\end{bmatrix}
\]
Using the fact that  
$\psi_1 \phi_1=\psi_2 \phi_2$ 
holds on the whole manifold, we can see $\Phi$ and $\Psi$ are inverse of each other (See \cite{Char04}).
Set $D=D_1\oplus D_2$ and $\wt{D}=\wt{D}_1\oplus \wt{D}_2$.
Then we have
\[
\det (D)=\det(D_1)\otimes \det(D_2)
\]
and 
\[
\det (\wt{D})=\det(\wt{D}_1)\otimes \det(\wt{D}_2).
\]
We have
\begin{align*}
&\Psi \wt{D}\Phi \\
&=\begin{bmatrix}
\phi_1 & -\psi_1 \\
\psi_2 & \phi_2
\end{bmatrix}
\begin{bmatrix}
\wt{D}_1 &  \\
 & \wt{D}_2
\end{bmatrix}
\begin{bmatrix}
\phi_1 & \psi_2 \\
-\psi_1 & \phi_2
\end{bmatrix}
\\
&=\begin{bmatrix}
\phi_1\circ{D}_1\circ\phi_1+\psi_1\circ{D}_1\circ\psi_1 &\phi_1[{D}_2, \psi_2] -\psi_1 [{D}_2, \phi_2]\\
\psi_2[{D}_1 , \phi_1]-\phi_2[{D}_1, \psi_1 ]& \psi_2\circ{D}_2\circ \psi_2+\phi_2\circ{D}_2\circ\phi_2
\end{bmatrix}. 
\end{align*}
Here, we used obvious relations
\[
\wt{D}_1\circ \phi_1=D_1\circ \phi_1, \quad
\wt{D}_2\circ \phi_2= D_2\circ \phi_2
\]
\[
\wt{D}_2\circ \psi_1=D_1\circ \psi_1,\quad 
 \wt{D}_1\circ \psi_2= D_2\circ \psi_2.
\]
and
\[
\psi_1\phi_1=\phi_2 \psi_1\quad \phi_1\psi_2= \psi_1\phi_2.
\]
On the other hand, we have
\[
D_i=D_i\circ (\phi^2_i+\psi^2_i)=([D_i, \phi_i]+\phi_i \circ D_i)\circ \phi_i+
([D_i, \psi_i]+\psi_i \circ D_i)\circ \psi_i
\]
for $i=1, 2$.
Thus 
\[
K:=\Psi \wt{D}\Phi-D: L^2_{k+l}(X)\to L^2_k(X)
\]
is calculated as
\begin{align*}
K
=\begin{bmatrix}
-[D_1, \phi_1]\circ \phi_1-[D_1, \psi_1]\circ \psi_1 &\phi_1[{D}_2, \psi_2] -\psi_1 [{D}_2, \phi_2]\\
\psi_2[{D}_1 , \phi_1]-\phi_2[{D}_1, \psi_1 ]& 
-[D_2, \phi_2]\circ \phi_2-[D_2, \psi_2]\circ \psi_2.
\end{bmatrix}
\end{align*}
The order of this operator is strictly smaller than $l$, so
\[
K: L^2_{k+l}(X)\to L^2_k(X)
 \]
 is a compact operator.
 Thus the family of operators
\[
\{D_t=D+t K\}_{t \in [0, 1]}
\]
gives a desired isomorphism between 
$\det (D)$ and $\det(\wt{D}) $.
\end{proof}
Note that each entry of $K$ is supported on $(A_1\cap B_1) \times (A_2\cap B_2)$, so 
the same conclusion holds even if $X_1$ and $X_2$ have conical ends as considered in this paper, as long as suitable Sobolev completions are used and $A_1\cap B_1$, $A_2\cap B_2$ are relatively compact.

\section{Blow up formula for Kronheimer--Mrowka's invariant}

\begin{defn} Fix an element of $\Lambda(W, \s, \xi)$. 
For a pair $(W, \xi )$ and a fixed reference Spin$^c$ structure $\s_0 \in \Spinc(W, \xi)$, we define two functions
\[
KM(W, \s_0, \xi ) := \sum_{e \in H^2 (W, \partial W; \Z) } \mathfrak{m}(W,\s_0+e, \xi)  \exp(\langle 2e, -\rangle )  :H_2(W, \partial W; \R)\to \R. 
\]
and
\[
\wt{KM}(W,\xi)(\nu):=e^{\frac{i}{2\pi}\int_{\nu}F_{A^t_0}}KM(W, \s_0, \xi)(\nu):H_2(W, \partial W; \R)\to \R. 
\]
Here, 
$A_0$ is a $\spinc$ connection for $\s_0$ extending the canonical $\spinc$ connection on the conical end.
\end{defn}
Note that $KM(W, \s_0, \xi )$ depends on the fixed $\Spinc$ structure $\s_0$. On the other hand, we can see the following: 
\begin{lem} 
The function $\wt{KM}(W,\xi)$ does not depends on a fixed $\Spinc$ structure $\s_0$.
\end{lem}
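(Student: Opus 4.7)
The plan is to pick a second reference $\Spinc$ structure $\s_0' \in \Spinc(W, \xi)$ and verify directly that the two resulting functions coincide. Since $\s_0$ and $\s_0'$ both restrict to the $\Spinc$ structure induced by $\xi$ on $Y$, their difference is classified by a class $e_0 \in H^2(W, \partial W; \Z)$, so $\s_0' = \s_0 + e_0$. Choose an extension $A_0'$ of the canonical $\spinc$ connection on the conical end to a $\spinc$ connection for $\s_0'$. The whole argument is a direct calculation showing that the change in the combinatorial sum $KM$ is exactly cancelled by the change in the curvature prefactor.

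First I would reindex the sum: substituting $e \mapsto e + e_0$ in the definition of $KM(W, \s_0', \xi)(\nu)$, using $\s_0' + e = \s_0 + (e + e_0)$, and invoking bilinearity of the pairing $\langle -,-\rangle$ yields
\[
KM(W, \s_0', \xi)(\nu) = \exp\bigl(-\langle 2e_0, \nu\rangle\bigr) \cdot KM(W, \s_0, \xi)(\nu).
\]
Next I would compute the change in the prefactor. Because $\tfrac{i}{2\pi}[F_{A^t}]$ de Rham represents $c_1(\s)$ and $c_1(\s_0') - c_1(\s_0) = 2e_0$, the closed $2$-form $\tfrac{i}{2\pi}(F_{(A_0')^t} - F_{A_0^t})$ represents $2e_0$. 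Crucially, $A_0$ and $A_0'$ agree with the canonical connection on the conical end, so the difference form has support in the interior of $W$, the pairing with the relative class $\nu \in H_2(W, \partial W; \R)$ is unambiguous, and
\[
\tfrac{i}{2\pi}\int_\nu F_{(A_0')^t} - \tfrac{i}{2\pi}\int_\nu F_{A_0^t} = \langle 2e_0, \nu\rangle.
\]
Multiplying the two contributions, the factors $e^{\pm\langle 2e_0, \nu\rangle}$ cancel exactly, and therefore $\wt{KM}(W, \xi)(\nu)$ is independent of $\s_0$.

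The one separate point worth isolating is that for a fixed $\s_0$ the prefactor is itself well-defined, i.e.\ independent of the choice of extension $A_0$. The plan here is a short Stokes-type argument: any two such extensions differ by an imaginary-valued $1$-form $\alpha$ that vanishes on the conical end, so the curvature changes by the exact $2$-form $2\,d\alpha$, and $\int_\nu d\alpha = 0$ by Stokes because $\alpha$ is trivial near $\partial W$. The main (very minor) obstacle is clean bookkeeping with the relative-cohomology/relative-homology pairing and the conical-end conventions for the connection; once these are in place, the calculation is formal.
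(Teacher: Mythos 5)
Your proof is correct and follows essentially the same route as the paper's: reindex the sum by the class $e_0$ relating the two reference $\Spinc$ structures, use that $\tfrac{i}{2\pi}(F_{(A_0')^t}-F_{A_0^t})$ represents $2e_0$, and observe that the resulting factors of $e^{\pm\langle 2e_0,\nu\rangle}$ cancel. Your aside verifying the prefactor is independent of the choice of extension $A_0$ (via a Stokes argument using that the difference $1$-form vanishes on the conical end) is a worthwhile point the paper leaves implicit.
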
 
\begin{proof}
Indeed, for $l\in H^2(X; \Z)$, $\s'_0:=\s_0+l$, we have
\[
2l=c_1(\s'_0)-c_1(\s_0)=\frac{i}{2\pi}\int_\nu (F_{A'^t_0}-F_{A^t_0}),
\]
 so
\begin{align*}
&e^{\frac{i}{2\pi}\int_{\nu}F_{A'^t_0}}KM(W, \s'_0, \xi)(\nu)
\\
&=e^{\frac{i}{2\pi}\int_{\nu}F_{A'^t_0}}\sum_{e' \in H^2 (W, \partial W; \Z) } \mathfrak{m}(W,\s'_0+e', \xi)  \exp(\langle 2e', \nu\rangle ) 
\\
&=e^{\frac{i}{2\pi}\int_{\nu}F_{A'^t_0}}\sum_{e' \in H^2 (W, \partial W; \Z) } \mathfrak{m}(W,\s_0+l+e', \xi, o)  \exp(\langle 2e', \nu\rangle )
\\
&=e^{\frac{i}{2\pi}\int_{\nu}F_{A'^t_0}}\sum_{e \in H^2 (W, \partial W; \Z) } \mathfrak{m}(W,\s_0+e, \xi, o)  \exp(\langle 2(e-l), \nu\rangle )
\\
&=e^{\frac{i}{2\pi}\int_{\nu}F_{A'^t_0}}\sum_{e \in H^2 (W, \partial W; \Z) } \mathfrak{m}(W,\s_0+e, \xi, o)  \exp(\langle 2e, \nu\rangle )\exp(-\frac{i}{2\pi}\int_\nu F_{A'^t_0}-F_{A^t_0})
\\
&=e^{\frac{i}{2\pi}\int_{\nu}F_{A^t_0}}\sum_{e \in H^2 (W, \partial W; \Z) } \mathfrak{m}(W,\s_0+e, \xi, o)  \exp(\langle 2e, \nu\rangle )
\\
&=e^{\frac{i}{2\pi}\int_{\nu}F_{A^t_0}}\sum_{e \in H^2 (W, \partial W; \Z)} KM(W, \s_0, \xi)(\nu)
\end{align*}
Here we changed the variables by
\[
e=l+e'.
\]
\end{proof}

We establish the blow-up formula for Kronheimer--Mrowka's invariant for 4-manifold with boundary, using the pairing formula and the formal $(3+1)$-TQFT property of the monopole Floer homology. As a partial result, \cite[Theorem 1.3]{Iida21} combined with a computation of the Bauer--Furuta invariant for $-\C P^2$ gives the blow up formula for Kronheimer--Mrowka's invariant for 4-manifold with boundary under the condition $b_3=0$. In this section, we remove the condition $b_3=0$ by following the discussion given in Section 39.3 of \cite{KM07}. 
\begin{thm}\label{nonfamily KM blowup}
Let $X$ be a compact oriented 4-manifold equipped with a contact structure $\xi$ on the boundary $Y=\partial X$.
Denote by the blow up of $X$ at an interior point by
\[
\hat{X}=X \# (-\C P^2)
\]
and the exceptional sphere by $E$.
Then for $\nu \in H_2(X, \partial X; \R)$ and $\lambda \in \R$,  
\[
\wt{KM}(\hat{X}, \xi)(\nu+\lambda E)=2\cosh (\lambda)\wt{KM}(X, \xi)(\nu)
\]
holds.
\end{thm}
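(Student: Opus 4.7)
The strategy is to reduce the identity to a Spin$^{c}$-structure-by-Spin$^{c}$-structure statement and then sum against the exponential weights. Write $\hat{X}^+ = X^+ \#_{S^3} (-\C P^2)$ where the connected sum is taken at an interior point of $X$. Every $\Spinc$ structure on $\hat X$ which is compatible with $\xi$ is of the form $\s \# \s_k$, where $\s \in \Spinc(X, \xi)$ and $\s_k$ is the $\Spinc$ structure on $-\C P^2$ with $\langle c_1(\s_k), E\rangle = k$ for some odd integer $k$. Since the ends/topology change only in the interior, a direct computation with the formula of \cref{nonfamily KM blowup} gives $d(\hat X, \s\#\s_k,\xi) = d(X,\s,\xi) + (1-k^2)/4$. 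Thus for $k=\pm 1$ the formal dimensions agree while for $|k|\ge 3$ the moduli spaces over $\hat X^+$ are empty by dimension count (after generic perturbation). The goal is therefore to prove the two identities
\[
\mathfrak m(\hat X, \s\#\s_{\pm 1}, \xi) = \mathfrak m(X, \s, \xi), \qquad \mathfrak m(\hat X, \s\#\s_k, \xi) = 0 \text{ for } |k|\ge 3,
\]
under a consistent choice of orientation data coming from $\Lambda(\hat X, \s\#\s_k, \xi)$, and then assemble them with the exponential weights.

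The first identity is established by the standard pairing/gluing philosophy: decompose $\hat X^+$ as the union of $X^+ \setminus B^4$ and $(-\C P^2)\setminus B^4$ along a neck $\R \times S^3$, stretch the neck, and express the Kronheimer--Mrowka count as the Floer-theoretic pairing of the relative invariants of the two pieces, valued in $\widecheck{HM}_\bullet(S^3) \otimes \widehat{HM}^\bullet(S^3)$ (equivalently, using $\overline{HM}$ together with the standard exact triangle, as in \cite[\S 39.3]{KM07}). The relative invariant of $(-\C P^2)\setminus B^4$ for $\s_{\pm 1}$ equals a generator of the relevant component of the Floer homology of $S^3$, since the corresponding Seiberg--Witten moduli space on $(-\C P^2)\setminus B^4$ with cylindrical end consists of the single reducible on the disk bundle side (this is essentially the classical blow-up computation). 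This pins down the pairing and yields the value $\mathfrak m(X, \s, \xi)$. For $|k|\ge 3$ either the dimension argument on $\hat X^+$, or an equivalent vanishing of the $(-\C P^2)\setminus B^4$ relative invariant, gives zero. Fixing the orientation of $\Lambda(\hat X, \s\#\s_k, \xi)$ from $\Lambda(X, \s, \xi)$ via the excision isomorphism of \cref{excision} (gluing across the $S^3$ neck, using the canonical trivialization coming from the almost complex $-\C P^2$) ensures the signs agree.

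Granting the case-by-case statement, the generating function computation is mechanical. Fix a reference $\s_0\in\Spinc(X,\xi)$ and take $\hat{\s}_0 = \s_0 \# \s_{+1}$ as the reference on $\hat X$. The class $e\in H^2(\hat X, \partial \hat X;\Z)$ decomposes as $e = e_X + mE^*$ with $e_X\in H^2(X,\partial X;\Z)$ and $m\in\Z$, and $\s_0+e$ corresponds to $\s_X \# \s_{2m+1}$. Plugging the case-by-case vanishing into the definition of $KM(\hat X, \hat{\s}_0, \xi)$ and pairing against $\nu + \lambda E$, only $m = 0$ and $m=-1$ survive, giving the factor $e^{\lambda} + e^{-\lambda} = 2\cosh\lambda$ while the remaining sum over $e_X$ reproduces $KM(X, \s_0, \xi)(\nu)$. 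The $\tilde{KM}$-normalization absorbs the discrepancy $\int_{\nu+\lambda E}(F_{\hat A_0^t} - F_{A_0^t})$, which is precisely accounted for by extending $A_0$ across the $-\C P^2$ summand using the canonical connection for $\s_{+1}$; this yields the claimed formula.

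The main obstacle is the neck-stretching/pairing step in the Kronheimer--Mrowka setting with conical (almost K\"ahler) end. Unlike the closed case in \cite[\S 39.3]{KM07}, one must simultaneously control both the compactness of the moduli space on $X^+\setminus B^4$ (where the configurations are asymptotic to Kronheimer--Mrowka's canonical reducible on the conical end) and the cylindrical-end analysis on $(-\C P^2)\setminus B^4$ along $S^3$. Once one verifies that the Floer-theoretic pairing formula of \cite{KM07} extends to relative invariants taking values in $\widecheck{HM}(S^3)$ from an almost K\"ahler end on one side, the rest is bookkeeping; this extension is done in \cite{Iida21} in the $b_3=0$ case, and the task is to upgrade it to the general case following the TQFT framework of \cite[\S 39.3]{KM07}.
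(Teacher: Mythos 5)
Your approach is essentially the same as the paper's: decompose $\hat X^\circ = X^\circ \cup_{S^3} N$ where $N = (-\C P^2)\setminus (B^4 \sqcup B^4)$, invoke a pairing formula expressing $\wt{KM}$ as a Floer-theoretic pairing $\langle \HMf(\overset{\circ}{X}, \nu)(1), \Psi(\xi)\rangle$, and use the cobordism map of $N$ to factor out the $\lambda$-dependence. The paper cites the pairing formula from \cite{IKMT} (the same team's upcoming paper), whereas you point to \cite{Iida21} and correctly flag that removing $b_3=0$ is the remaining technical task; this diagnosis of the crucial input is accurate. The paper is more concise than your proposal because it uses the packaged cobordism-map formula $\HMf(N,\lambda E) = \sum_m U^{m(m+1)/2} e^{-(2m+1)\lambda}$ from \cite[\S 39.3]{KM07} rather than your explicit case split on the odd integer $k = 2m+1$.

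There is, however, a genuine gap in your treatment of the $|k|\geq 3$ case. You claim that ``for $|k|\ge 3$ the moduli spaces over $\hat X^+$ are empty by dimension count,'' but the formal dimension on $\hat X^+$ for $\s\#\s_{2m+1}$ is $d(X,\s,\xi) - m(m+1)$, which can perfectly well equal zero when $d(X,\s,\xi) = m(m+1) > 0$; so emptiness does not follow from the dimension count. Your fallback, ``an equivalent vanishing of the $(-\C P^2)\setminus B^4$ relative invariant,'' is also not correct as stated: that relative invariant for $\s_{2m+1}$ is $U^{m(m+1)/2}$, which is nonzero as a Floer map. What actually kills the $|k|\geq 3$ terms is a grading statement about the pairing itself: once one writes $\wt{KM}(\hat X,\xi)(\nu+\lambda E) = \sum_m e^{-(2m+1)\lambda}\langle \HMf(\overset{\circ}{X},\nu)\circ U^{m(m+1)/2}(1), \Psi(\xi)\rangle$, the terms with $m(m+1)>0$ vanish because the contact class is annihilated by $U$ (equivalently, because the Kronheimer--Mrowka count is, by definition, supported only in formal dimension zero, so the grading shift induced by a positive $U$-power forces the pairing to vanish). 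The paper phrases this as ``all terms including higher powers of $U$ disappear.'' Your proposal as written misattributes the reason for this vanishing, and that step needs to be corrected for the argument to close.
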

\begin{proof}
Let us denote by
\[
\overset{\circ}{X}: S^3\to Y
\]
and
\[
\overset{\circ}{\hat{X}}: S^3\to Y
\]
the cobordism obtained by removing small open disk from  $X$ and $\hat{X}$ respectively and let
\[
N: S^3\to S^3
\]
be the cobordism obtained by removing two small open disks from $-\C P^2$.
By the pairing formula  which is proves in \cite{IKMT} and the composition law Proposition 26.1.2 of \cite{KM07}, we have
\begin{align*}
\wt{KM}(\hat{X}, \xi)(h+\lambda E)&=
\langle \HMf (\overset{\circ}{\hat{X}}, \nu+\lambda E)(1),  \Psi_{\partial \nu }(\xi)\rangle \\
& =\langle  \HMf (N, \lambda E)\circ \HMf (\overset{\circ}{X}, \nu)(1),  \Psi_{\partial \nu }(\xi)\rangle, 
\end{align*}
where we denote the local system $\Gamma_\nu$ just by $\nu$.
Now, as explained in the proof of Theorem 39.3.2 of \cite{KM07}, we have
\[
\HMf (N, \lambda E)=\sum_{m \in \Z} U^{m(m+1)/2} e^{-(2m+1)\lambda},
\]
which can in fact be expressed using the Jacobi eta function.
Kronheimer--Mrowka's invariant is defined to be zero when formal dimension is non-zero, so all terms including higher powers of $U$ disappear.
Thus we have
\begin{align*}
\wt{KM}(\hat{X}, \xi)(\nu+\lambda E)&=(e^\lambda+e^{-\lambda})
\langle  \HMf (\overset{\circ}{X}, \nu)(1),  \Psi_{\partial \nu }(\xi)\rangle \\ 
&=2\cosh(\lambda) \wt{KM}(X, \xi)(\nu).
\end{align*}
\end{proof}
In particular, if Kronheimer--Mrowka's invariant is non-trivial for some element of $\Spinc(X, \xi)$, then 
Kronheimer--Mrowka's invariant is non-trivial for some element of $\Spinc(\hat{X}, \xi)$.

\section{Conflict of interest}
On behalf of all authors, the corresponding author states that there is no conflict of interest.

\bibliographystyle{plain}
\bibliography{tex}

\end{document}